\documentclass[11pt,a4paper]{article}

\usepackage{pdfsync}
\interfootnotelinepenalty=10000

\usepackage[hidelinks]{hyperref} \usepackage{latexsym,amsfonts,amsmath,amsthm,graphics,multirow}
\usepackage{mathrsfs}
\usepackage[normalem]{ulem}
\usepackage{enumitem}
\usepackage[dvipsnames,svgnames,table]{xcolor}
\usepackage{epsfig}
\usepackage{here}

\setlength{\textheight}{24.4cm} \setlength{\textwidth}{16cm}
\setlength{\hoffset}{-1.7cm} \setlength{\voffset}{-2cm}

\newtheorem{theorem}{Theorem}

\newtheorem{remark}[theorem]{Remark}

\newcommand{\bsgamma}{{\boldsymbol{\gamma}}}
\newcommand{\bseta}{{\boldsymbol{\eta}}}

\newcommand{\bsnu}{{\boldsymbol{\nu}}}

\newcommand{\bsb}{{\boldsymbol{b}}}
\newcommand{\bsm}{{\boldsymbol{m}}}
\newcommand{\bse}{{\boldsymbol{e}}}

\newcommand{\bsx}{{\boldsymbol{x}}}
\newcommand{\bsh}{{\boldsymbol{h}}}

\newcommand{\bst}{{\boldsymbol{t}}}
\newcommand{\bsz}{{\boldsymbol{z}}}

\newcommand{\bsy}{\boldsymbol{y}}

\newcommand\setu{{\mathfrak{u}}}
\newcommand\setv{{\mathfrak{v}}}

\newcommand{\bszero}{\boldsymbol{0}}
\newcommand{\rd}{{\mathrm{d}}}

\newcommand{\R}{\mathbb{R}}

\newcommand{\calA}{{\mathcal{A}}}

\newcommand{\calO}{{\mathcal{O}}}

\newcommand{\calK}{{\mathcal{K}}}

\newcommand{\calS}{{\mathcal{S}}}
\newcommand{\bbZ}{{\mathbb{Z}}}
\newcommand{\bbN}{{\mathbb{N}}}
\newcommand{\bbR}{{\mathbb{R}}}

\newcommand{\rmi}{{\mathrm{i}}}

\newcommand{\rme}{{\mathrm{e}}}
\newcommand{\supp}{{\mathrm{supp}}}
\newcommand\Tstrut{\rule{0pt}{2.6ex}}         \newcommand\Bstrut{\rule[-0.9ex]{0pt}{0pt}}

\newcommand{\mask}[1]{}

\newcommand{\bs}[1]{\ensuremath{\boldsymbol{#1}}}

\usepackage{soul}
\soulregister\cite{7}

\definecolor{darkred}{RGB}{139,0,0}
\definecolor{darkgreen}{RGB}{30,130,80}
\definecolor{darkmagenta}{RGB}{139,0,139}
\definecolor{darkorange}{RGB}{180,60,0}
\definecolor{darkcyan}{RGB}{0,139,139}

\usepackage[hang,flushmargin]{footmisc} 

\title{Fast approximation by periodic kernel-based lattice-point interpolation with application in uncertainty quantification}

\author{{Vesa Kaarnioja}\footnotemark[2], {Yoshihito Kazashi}\footnotemark[3], {Frances Y.~Kuo}\footnotemark[2], {Fabio Nobile}\footnotemark[3], {Ian H.~Sloan}\footnotemark[2] }

\date{\today}

\begin{document}
\maketitle
\renewcommand{\thefootnote}{\fnsymbol{footnote}}
\footnotetext[2]{School of Mathematics and Statistics, University of New South Wales, Sydney NSW 2052, Australia\\ (\texttt{vesa.kaarnioja@iki.fi}, \texttt{f.kuo@unsw.edu.au}, \texttt{i.sloan@unsw.edu.au}).}
\footnotetext[3]{CSQI, Institute of Mathematics, \'{E}cole Polytechnique F\'{e}d\'{e}rale de Lausanne, 1015 Lausanne, Switzerland\\ (\texttt{y.kazashi@uni-heidelberg.de}, \texttt{fabio.nobile@epfl.ch}).}

\begin{abstract}
This paper deals with the kernel-based approximation of a multivariate
periodic function by interpolation at the points of an integration lattice---a setting that, as pointed out by Zeng, Leung, Hickernell (MCQMC2004, 2006) and Zeng, Kritzer, Hickernell (Constr.\ Approx., 2009), allows
fast evaluation by fast Fourier transform, so avoiding the need for a linear solver. The main
contribution of the paper is the application to the approximation problem
for uncertainty quantification of elliptic partial differential equations,
with the diffusion coefficient given by a random field that is periodic in
the stochastic variables, in the model proposed recently by Kaarnioja,
Kuo, Sloan (SIAM J.\ Numer.\ Anal., 2020). The paper gives a full error
analysis, and full details of the construction of lattices needed to
ensure a good (but inevitably not optimal) rate of convergence and an
error bound independent of dimension. Numerical experiments support the
theory.
\end{abstract}

\section{Introduction}
We consider a \emph{kernel-based approximation} for a multivariate
\emph{periodic} function by \emph{interpolation} at a quasi-Monte Carlo \emph{lattice} point set.
Kernel-based interpolation methods are by now well established
(see, e.g.,~\cite{Wen05} and more discussion below).
It is the unique combination of \emph{a periodic kernel plus a lattice point set} here that
will deliver us the significant advantage in computational efficiency.
As already advocated by Hickernell and colleagues in \cite{ZLH06,ZKH09}, the
combination of a periodic reproducing kernel with the group structure of
lattice points means that the linear system for constructing the kernel interpolant involves a circulant matrix, thus can be solved very efficiently using the fast Fourier transform.
So, our kernel method can be fast even if the dimensionality is high.

As also advocated in \cite{ZLH06,ZKH09}, a kernel interpolant is in many
settings optimal among all approximation algorithms that use the same
function values (see also known results on optimal recovery, e.g.,
\cite{MicRiv77,MicRiv85}). We can therefore analyze the worst case
approximation error of our kernel method by using, as upper bound, the
worst case error of an auxiliary algorithm based on a Fourier series
truncated at a hyperbolic cross index set. Using recent works
\cite{CKNS-part1,CKNS-part2}, we here construct a lattice generating
vector with a guaranteed good error bound for our kernel interpolant.
Note, importantly, that \emph{neither the construction of our lattice
	generating vector, nor the implementation of our kernel method, requires
	explicit knowledge or evaluation of the auxiliary hyperbolic cross index
	set}. In short, we know how to find a good lattice point set so that our
kernel method has a small error in addition to being of low cost.

In this paper, the main contribution is to apply and analyze this
periodic-kernel-plus-lattice method to uncertainty quantification of
elliptic partial differential equations (PDEs), where the diffusion
coefficient is given by a random field that is \emph{periodic in the
	stochastic variables}, as in the model proposed recently by Kaarnioja,
Kuo, and Sloan \cite{KKS}. We tailor our lattice generating vector to the
regularity of the PDE solution with respect to the stochastic variables.
Our numerical results beat the theoretical predictions, indicating that
the theory based on worst case analysis may not be sharp.

The kernel approximation developed here may have a role as a surrogate model for complicated forward problems.
One popular use for surrogate models is to allow efficient sampling of the original system.
If the solution of some particularly difficult PDE problem with high accuracy takes a week for a given parameter choice $\bsy$, then having a kernel interpolant that can be evaluated in hours or minutes could be very useful.
A second possible use for the kernel interpolant is in the easy generation of derivatives, needed for example in gradient-based optimization  algorithms.
The surrogate might be even more useful for Bayesian inverse problems.

\smallskip We now elaborate key points.\smallskip

\noindent\textbf{Periodic-kernel-plus-lattice method.}
Let $f(\bsy)=f(y_1,\ldots,y_s)$ be a real-valued function on the
$s$-dimensional unit cube $[0,1]^s$, with a somewhat smooth 1-periodic
extension to $\bbR^s$. Our main interest is in problems where the
dimension $s$ is large. Following \cite{SW01}, we assume that $f$ has an
absolutely convergent Fourier series, and belongs to a weighted mixed
Sobolev space $H:=H_{s,\alpha,\bsgamma}$ which is characterized by a
smoothness parameter $\alpha>1$ and a family of positive numbers
$\bsgamma=(\gamma_\setu)_{\setu\subset\bbN}$ called \emph{weights}; the
details are given in Section~\ref{sec:space}.

Our ultimate application, to be analyzed in Section~\ref{sec:PDE},
concerns a class of elliptic PDEs parameterized by a very high (or
possibly countably infinite)  number of stochastic parameters, for which the solution, as a function of the parameters, is periodic and belongs to the weighted space $H$ for a suitable choice of $\alpha$ and
$\bsgamma$.

The important feature of the space $H$ is that it is a reproducing kernel
Hilbert space (RKHS), with a simple reproducing kernel $K(\bsy,\bsy')$.
This opens the way to the use of kernel methods to approximate functions
in $H$ from given point values. In particular, in this paper we focus on
the \emph{kernel interpolation}: given $f\in H$ and a suitable set of
points $\bst_1,\dots,\bst_n\in [0,1]^s$, we seek for an approximation
$f_n\in H$ of the form
\begin{align} \label{eq:interpolant}
f_n(\bsy) \,:=\, \sum_{k=1}^{n} a_{k}\,K(\bst_k,\bsy),
\qquad\bsy\in[0,1)^s,
\end{align}
which satisfies the interpolation condition $f_n(\bst_{k}) = f(\bst_{k})$,
$k=1,\dots,n$. We refer to $f_n$ as the \emph{kernel interpolant} of $f$.

We will interpolate the function at a set of $n$ \emph{lattice points}
specified by a \emph{generating vector} $\bsz \in \bbZ^s$. The points are
then given by the formula $\bst_k = {(k\bsz \bmod n)/{n}}$,
$k=1,\ldots,n$, with $\bst_n=\bst_0 = \bszero$. A lattice point set has an
additive group structure, implying that the difference of two lattice
points is another lattice point (after taking into account periodicity).

A key property of our reproducing kernel is that it depends only on the
difference of the two arguments, thus $K(\bsy,\bsy') =
K(\bsy-\bsy',\bszero)$, and $K(\cdot,\bszero)$ is a periodic function with
an easily computable expression when $\alpha$ is an even integer.
Combining this with the group structure of lattice points means that the
matrix $[K(\bst_k-\bst_{k'},\bszero)]_{k,k'=1,\ldots,n}$ contains only $n$
distinct values and indeed is a circulant matrix. Therefore the linear
system arising from collocating \eqref{eq:interpolant} at the points
$\bst_{k'}, k' = 1,\ldots,n$, can be solved using the fast Fourier
transform with a cost of $\calO(n\log(n))$.

Once we have the coefficients $a_k$, we can use \eqref{eq:interpolant} to
evaluate the interpolant $f_n$ at $L$ arbitrary points $\bsy_\ell$,
$\ell=1,\ldots,L$, with a cost of $\calO(Ln)$. Remarkably, with almost the
same cost we can evaluate $f_n$ at all the $Ln$ points of the \emph{union
	of shifted lattices} $\bsy_\ell + \bst_{k'}$, $\ell=1,\ldots,L$,
$k'=1,\ldots,n$. Indeed, since $K(\bst_k,\bsy_\ell + \bst_{k'}) =
K(\bst_k-\bst_{k'},\bsy_\ell)$ and the matrix
$[K(\bst_k-\bst_{k'},\bsy_\ell)]_{k,k'=1,\ldots,n}$ is circulant, we have
\[ f_n(\bsy_\ell+ \bst_{k'}) \,=\, \sum_{k=1}^{n} a_{k}\,K(\bst_k-\bst_{k'},\bsy_\ell),
\] which can be evaluated for each $\bsy_\ell$ for all $\bst_{k'}$ together
by fast Fourier transform with a cost of $\calO(n\log(n))$, leading to the
total cost of $\calO(Ln\log(n))$. Comprehensive cost analysis taking into
account also the evaluations of $f$ and $K$ is given in
Section~\ref{sec:cost}.
\medskip

\noindent\textbf{Brief survey on kernel methods in high dimensions.}
Griebel and Rieger \cite{Griebel.M_Rieger_2017_kernel} considered a
(non-interpolatory) kernel approximation based on a regularized
reconstruction technique from machine learning for a class of
parameterized elliptic PDEs similar to the one considered in this work,
yet with non-periodic dependence on the parameters.
They used an anisotropic kernel,
behaving differently in different variables, to address the high
dimensionality of the problem. However, their error estimate was in terms
of the mesh norm or fill distance of the point set, which is the Euclidean
radius of the largest Euclidean ball that contains no points in its
interior.  Since the fill distance behaves at best like $n^{-1/s}$, where
$n$ is the number of sampling points, their estimates inevitably suffer
the \emph{curse of dimensionality}.

Kempf et al.~\cite{Kempf.R_etal_2019_kernel_pde} considered the same PDE
problem and anisotropic kernel as \cite{Griebel.M_Rieger_2017_kernel}.
However, they considered a penalized least-squares approach for kernel
approximation and an isotropic sparse grid as point set, which allowed
them to obtain error estimates with a mitigated (but still present) curse
of dimensionality.

As noted above, lattice points have already been used in a kernel
interpolation method. Zeng et al.~\cite{ZLH06} seem to be the first to
work in this direction, however the question of dependence on dimension
was not considered in their analysis. Zeng et al.~\cite{ZKH09} established
dimension independent error estimates in weighted spaces in the case of
product weights (i.e., weights that have the form
$\gamma_{\setu}=\prod_{j\in \setu} \gamma_j$).
We note, however, that the assumption of product weights is rather limiting. For instance, for integration problems involving parameterized PDEs, the best convergence rates known up to now are obtained by considering weighted space for the parameter-to-solution map with (S)POD weights \cite{Graham.I_eta_2015_Numerische,KKS,Kuo.F_Schwab_Sloan_2012_SINUM},
whereas weighted spaces with product weights lead to the best known rates only for special models  \cite{Gantner.R_etal_2018_affine_local,Herrmann.L_Schwab_2019_local_lognormal,Kazashi.Y_2019_product}.
In this paper we extend these results to the case of kernel approximation
(as opposed to integration) of the parameter-to-solution map, and we are
able to show dimension-independent convergence rates using (S)POD weights
in the general case. To the best of our knowledge, this is the first paper
to use non-product weights for approximation in parameterized PDE
problems.
\medskip

\noindent\textbf{PDEs with periodic dependence on random variables.}
Our motivating application is a class of parameterized elliptic PDEs with
periodic dependence on the  parameters, for which we will establish dimension independent
error estimate for the kernel interpolant, by deriving suitable choices of
smoothness parameter and weights for the problem at hand. To the best of
our knowledge, this is the first paper presenting dimension independent
kernel approximation methods using lattice points for this class of
problems.

We consider uncertainty quantification for an elliptic PDE (see details in
Section~\ref{sec:PDE}) on a physical domain $D\subset\mathbb{R}^d$, $d= 1,
2$ or $3$, in a probability space $(\Omega, \mathscr A, \mathbb{P})$, with
an input random field of the form
\[
a(\bsx,\omega) \,=\,
a_0(\bsx)+\sum_{j\geq 1} \Theta_j(\omega)\,\psi_j(\bsx),\qquad \bsx\in D,~\omega\in\Omega,
\]
where $a_0$ and $\psi_j$ are uniformly bounded in $D$, and
$\Theta_j(\omega)$ are i.i.d.\ random variables following a prescribed
distribution. In the popular affine model, $\Theta_j$ are i.i.d.\ random variables  uniformly distributed on $[-\frac{1}{2},\frac{1}{2}]$.
In the periodic model \cite{KKS}, $\Theta_j$
are i.i.d.~random variables distributed according to the  arcsine distribution and can be parameterized as
\[
\Theta_j \,=\, \frac{1}{\sqrt{6}} \sin(2\pi y_j), \qquad j\ge 1,
\]
with $y_j$ uniformly distributed on $[-\frac{1}{2},\frac{1}{2}]$. The mean
of the random field is $a_0$, and the scaling ${{1}/{\sqrt{6}}}$ is chosen here
so that the covariance of the random field is also exactly the
same as in the affine case. Higher moments are of course somewhat
different, but as argued in \cite{KKS}, there seems to be no clear reason
for preferring one over the other.

Due to periodicity, it is equivalent to work with $y_j$ uniformly distributed in the interval $[0,1]$ instead of $[-\frac{1}{2},\frac{1}{2}]$, thus from now on we consider the parameter space
\[
\bsy \in U \,:=\, [0,1]^\mathbb{N}.
\]

In the earlier paper \cite{KKS} the aim was to develop and analyze a method for computing the expected value of a given quantity of interest, expressed as a linear functional of the PDE solution, hence facing a high dimensional integration problem.
Here, in contrast, the aim is to develop and analyze
a fast method for approximating the solution  $u(\bsx,\bsy)$, or some quantity of interest $Q(\bsy)$ derived from  $u(\bsx,\bsy)$, as an
explicit function of $\bsy$. To that end we will develop a kernel-based
approximation, using the kernel of a reproducing kernel Hilbert space of
periodic functions, and interpolation at a lattice point set.
\medskip

\noindent\textbf{Structure of the paper.} In Section~\ref{sec:optimality} we define the function space setting and
the kernel interpolant, and establish its principal properties, while giving a simple proof of a known optimality result, namely that in the sense of worst case error the kernel interpolant is an optimal $L_{p}$ approximation among all approximations that use the same information about the target function $f\in H$.
Then in Section 3 we establish upper and lower bounds on the error.
For the  upper bound we use the optimality result together
with the error analysis for a trigonometric polynomial method established
by two of the current authors together with Cools and Nuyens
\cite{CKNS-part1,CKNS-part2}. For the lower bound we provide another proof
of a recent result by Byrenheid et al.~\cite{BKUV17}, namely that a
method that draws information only from function values at lattice points
inevitably has a rate of convergence that is at best only half of the best
possible rate, thereby obtaining matching upper and lower bounds up to
logarithmic factors. In Section~\ref{sec:PDE}, we apply the error analysis
developed in Section~\ref{sec:error-analysis} to a parameterized PDE
problem, thereby obtaining rigorous upper error bounds that are
independent of dimension and have explicit rates of convergence.
Section~\ref{sec:cost} is concerned with the cost analysis of our proposed
method. In Section~\ref{sec:numerical} we give the results of some
numerical experiments.

\section{The kernel interpolant}\label{sec:optimality}

\subsection{The function space setting} \label{sec:space}

Let $f(\bsy)=f(y_1,\ldots,y_s)$ be a real-valued function on $[0,1]^s$
with a somewhat smooth $1$-periodic extension to $\R^s$ with respect to
each variable $y_j$. Our main interest is in problems where the dimension
$s$ is large. Following \cite{SW01}, we assume that $f$ has absolutely
convergent Fourier series (and so is continuous),
\begin{align*}
f(\bsy) \,=\, \sum_{\bsh\in\bbZ^s} \widehat{f}(\bsh)\,\rme^{2\pi \rmi \bsh\cdot\bsy},
\qquad\mbox{with}\qquad
\widehat{f}(\bsh) &\,:=\, \int_{[0,1]^s} f(\bsy)\,\rm{e}^{-2\pi \rmi \bsh\cdot\bsy}\,\rd\bsy;
\end{align*}
and moreover belongs to a weighted mixed Sobolev space
$H:=H_{s,\alpha,\bsgamma}$, a Hilbert space with inner product and norm
\begin{align*} \langle f,g \rangle_H
&\,:=\, \langle f,g \rangle_{s,\alpha,\bsgamma}
\,:=\, \sum_{\bsh\in \bbZ^s} r(\bsh)\,
\widehat{f}(\bsh)\,\overline{\widehat{g}(\bsh)}, \\
\|f\|_H
&\,:=\, \|f\|_{s,\alpha,\bsgamma}
\,:=\, \bigg(\sum_{\bsh\in \bbZ^s} r(\bsh)\,|\widehat{f}(\bsh)|^2\bigg)^{1/2}, \nonumber
\end{align*}
where
\begin{align*}
r(\bsh) \,:=\, r_{s,\alpha,\bsgamma}(\bsh)
\,:=\, \frac{1}{\gamma_{\supp(\bsh)}} \prod_{j\in \supp(\bsh)} |h_j|^\alpha,
\end{align*}
with $\supp(\bsh) := \left\{j\in\{1:s\} :h_j\ne 0\right\}$ and
$\{1:s\}:=\{1, 2, \ldots, s\}$, and with the $\bsh=\bs0$ term in the sum
to be interpreted as $\gamma_\emptyset^{-1} |\widehat{f}(\bs0)|^2$. The
weighted space $H_{s,\alpha,\bsgamma}$ is characterized by the smoothness
parameter $\alpha>1$ and a family of positive numbers
$\bsgamma=(\gamma_\setu)_{\setu\subset \bbN}$ called \emph{weights}, where
a positive weight $\gamma_\setu$ is associated with each subset $\setu
\subseteq \{1:s\}$. We fix the scaling of the weights by setting
$\gamma_\emptyset := 1$, so that the norm of a constant function in $H$
matches its $L_2$ norm.

It can easily be verified that if $\alpha$ is an even integer then the
norm can be rewritten as the norm in an ``unanchored'' weighted Sobolev
space of dominating mixed smoothness of order ${{\alpha}/{2}}$,
\begin{align}\label{eq:Hnorm}
\|f\|_H=\sqrt{\sum_{\setu\subseteq\{1:s\}}\frac{1}{(2\pi)^{\alpha |\setu|} \gamma_\setu}
	\int_{[0,1]^{|\setu|}}\!\!\;\bigg|\int_{[0,1]^{s-|\setu|}} \bigg(\prod_{j\in\setu} \frac{\partial^{\alpha/2}}{\partial y_j^{\alpha/2}}\bigg)
	f(\bsy)\,\rd\bsy_{-\setu}\bigg|^2\,\rd\bsy_\setu},
\end{align}
where $\bsy_\setu$ denotes the components of $\bsy$ with indices that
belong to the subset $\setu$, and $\bsy_{-\setu}$ denotes the components
that do not belong to $\setu$, and $|\setu|$ denotes the cardinality of $\setu$.

The important feature of the space $H$ is that it is an RKHS, with an explicitly known and analytically simple
reproducing kernel, namely
\[
K(\bsy,\bsy') \,:=\, K_{s,\alpha,\bsgamma}(\bsy,\bsy')
\,:=\, \sum_{\setu\subseteq{\{1:s\}}}\gamma_{\setu}
\prod_{j\in\setu}\eta_\alpha(y_j,y_j'),
\]
where
\[
\eta_{\alpha}(y,y') \,:=\, \eta_{\alpha}(y-y')
\,:=\, \sum_{h\neq0}\frac{\rme^{2\pi\rmi h(y-y')}}{|h|^{\alpha}}
\,=\, \sum_{h\neq0}\frac{\cos{2\pi h(y-y')}}{|h|^{\alpha}}.
\]
Note that the \emph{reproducing property}
\begin{equation} \label{eq:repro}
\langle f,K(\cdot,\bsy)\rangle_H \,=\, f(\bsy)
\qquad\mbox{for all $f\in H$ and all $\bsy \in [0,1]^s$},
\end{equation}
is easily verified.

Of special interest are even integer values of $\alpha$, because, when
$\alpha$ is even, $\eta_{\alpha}$ can be expressed in the especially simple
closed form
\[
\eta_{\alpha}(y,y') \,=\,
\frac{(2\pi)^{\alpha}}{(-1)^{\alpha/2+1}\alpha!}B_{\alpha}(\{y-y'\}),\qquad y,y'\in[0,1],
\]
where the braces indicate that $y-y'$ is to be replaced by its fractional
part in $[0,1)$, and $B_{\alpha}(y)$ is the Bernoulli polynomial of degree
$\alpha$. For example, for $\alpha=2$ and $\alpha=4$ we have
\[
B_{2}(y) \,=\, y^{2}-y+ \frac{1}{6}
\qquad\mbox{and}\qquad
B_{4}(y) \,=\, y^{4}-2y^{3}+y^{2}-\frac{1}{30}.
\]

\subsection{The kernel interpolant}

We are interested in approximating a given function $f\in H$ by an
approximation of the form
\begin{equation}\label{eq:form}
A^*_n(f) :=
f_n(\bsy) := f_{s,\alpha,\bsgamma,n,\bsz}(\bsy)
:= \sum_{k=1}^{n} a_{k}\,K\Big(\Big\{\frac{k\bsz}{n}\Big\},\bsy\Big),
\qquad\bsy\in[0,1)^s,
\end{equation}
where $\bsz\in\{1,\dots,n-1\}^{s}$, and the braces around the vector of
length $s$ indicate that each component of the vector is to be replaced by
its fractional part. The points
\begin{equation} \label{eq:lat_pts}
\bst_{k} \,:=\, \Big\{\frac{k\bsz}{n}\Big\} \qquad\mbox{for}\qquad k=1,\dots,n
\end{equation}
are the points of a lattice cubature rule of rank $1$, see \cite{SJ94}.
{In what follows, we omit these braces because functions we consider are, unless otherwise stated, periodic.}

In particular, we define $f_n \in H$ to be the function of the form
\eqref{eq:form} that \emph{interpolates} $f$ at the lattice points,
\begin{equation} \label{eq:interp}
f_n(\bst_{k}) \,=\,f(\bst_{k})
\qquad\mbox{for all}\qquad k=1,\dots,n,
\end{equation}
and refer to $f_n$ as the \emph{kernel interpolant} of $f$.

The coefficients $a_{k}$ in \eqref{eq:form} are given by the linear system
based on \eqref{eq:interp}
\begin{equation} \label{eq:lin_sys}
\sum_{k=1}^{n} \calK_{k,k'}\, a_{k} \,=\, f\left(\bst_{k'}\right)
\qquad\mbox{for all}\qquad k'=1,\dots,n,
\end{equation}
where
$
\calK_{k',k} \,=\, \calK_{k,k'} \,:=\, K(\bst_{k},\bst_{k'})$, $ k,k'=1,\ldots,n$.
Note that the matrix elements can be expressed, using periodicity, as
\[
\calK_{k,k'} \,=\, K \biggl({\frac{(k-k')\bsz}{n}},\bs0\biggr),
\]
where $\bs0$ is the $s$-vector of all zeroes. It follows that the $n\times
n$ matrix $\calK$ is a \emph{circulant matrix}, which contains only $n$
distinct elements, and can be diagonalised in a time of order $n\log n$ by
fast Fourier transform. This is a major motivation for using lattice
points.

\subsection{The kernel interpolant is the minimal norm interpolant}

The following property is a well known result for interpolation in a
reproducing kernel Hilbert space; for completeness we give a proof.

\begin{theorem}
	The kernel interpolant $f_n$ defined by \eqref{eq:form},
	\eqref{eq:lat_pts} and \eqref{eq:interp} is the minimal norm interpolant
	in~$H$.
\end{theorem}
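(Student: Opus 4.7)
The plan is to use the standard Hilbert-space orthogonality argument that exploits the reproducing property \eqref{eq:repro}. Let $V_n := \Span\{K(\bst_k,\cdot) : k=1,\ldots,n\} \subseteq H$. By its very definition \eqref{eq:form}, the kernel interpolant $f_n$ lies in $V_n$, and by \eqref{eq:interp} it is an interpolant of $f$ at the lattice points $\bst_1,\ldots,\bst_n$.

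First I would consider an arbitrary competitor $g \in H$ satisfying the same interpolation conditions $g(\bst_k) = f(\bst_k)$ for $k=1,\ldots,n$, and examine the difference $g - f_n$, which vanishes at every lattice point. The key step is to observe that, by the reproducing property \eqref{eq:repro},
\begin{equation*}
\langle g - f_n,\, K(\bst_k,\cdot)\rangle_H \,=\, (g-f_n)(\bst_k) \,=\, 0 \qquad \text{for all } k=1,\ldots,n,
\end{equation*}
so $g - f_n$ is orthogonal to every spanning element of $V_n$, hence to all of $V_n$. In particular, since $f_n \in V_n$, we have $\langle g - f_n, f_n\rangle_H = 0$.

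The result then follows from the Pythagorean identity:
\begin{equation*}
\|g\|_H^2 \,=\, \|f_n + (g - f_n)\|_H^2 \,=\, \|f_n\|_H^2 + \|g - f_n\|_H^2 \,\geq\, \|f_n\|_H^2,
\end{equation*}
with equality if and only if $g = f_n$. This shows that $f_n$ is the unique minimal norm interpolant in $H$.

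I do not anticipate any real obstacle: the argument is a textbook application of the reproducing property combined with orthogonal decomposition, and it uses nothing about the lattice structure of $\bst_1,\ldots,\bst_n$ beyond their being the points at which interpolation is imposed. The only minor point to handle carefully is the existence and uniqueness of the coefficients $a_k$ solving \eqref{eq:lin_sys}, which is guaranteed as soon as the Gram matrix $\calK = [K(\bst_k,\bst_{k'})]_{k,k'=1}^{n}$ is positive definite; this in turn follows from $K$ being the reproducing kernel of $H$ together with the fact that the lattice points $\bst_1,\ldots,\bst_n$ are pairwise distinct.
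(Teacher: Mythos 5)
Your proof is correct and follows essentially the same route as the paper: the reproducing property gives orthogonality of $g-f_n$ to the span of the kernels $K(\bst_k,\cdot)$, and the Pythagorean identity then yields the minimal norm property and uniqueness. Your additional remark on the invertibility of the Gram matrix is a sensible point of care but does not change the argument.
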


\begin{proof}
	Denoting the linear span of the kernels with one leg at
	$\bst_{k},k=1,\ldots,n$ by
	\[
	P_n \,:=\, \mathrm{span}\{K(\bst_{k},\cdot):k=1,\ldots,n\},
	\]
	we observe the well known fact (see, e.g.,
	\cite{deBoor.C_Lynch_1966_spline_minimum,Golomb.M_Weinberger_1959_optimal}),
	that $f_n$ is the orthogonal projection of $f$ on $P_n$ with respect to
	the inner product $\langle\cdot,\cdot\rangle_H$, since from the reproducing property~\eqref{eq:repro} and the interpolation
	property \eqref{eq:interp} we have
	\[
	\langle f-f_n,K(\bst_{k},\cdot) \rangle_H
	\,=\, f(\bst_{k})-f_n(\bst_{k}) \,=\, 0
	\qquad\mbox{for all}\qquad k=1,\dots,n.
	\]
	In turn, there follows the Pythagoras theorem,
	\begin{equation} \label{eq:Pythag}
	\|f\|_H^{2} \,=\, \|f-f_n\|_H^{2}+\|f_n\|_H^{2},
	\end{equation}
	and the \emph{minimal norm} property of $f_n$,
	\[
	f_n \,=\, \mathrm{argmin}\big\{\|g\|_H \;:\; g\in H \mbox{ and }  g(\bst_{k})=f(\bst_{k}) \mbox{ for all } k=1,\ldots,n\big\},
	\]
	since if $g$ is any other interpolant of $f$ at the lattice points then
	\[
	\langle g-f_n,f_n\rangle_H
	\,=\, \sum_{k=1}^{n} a_{k}\,\langle g-f_n, K(\bst_{k},\cdot)\rangle_H \,=\, 0,
	\]
	and hence $
	\|g\|_H^{2} \,=\, \|g-f_n\|_H^{2}+\|f_n\|_H^{2}$,
	from which the uniqueness of the minimal norm interpolant also follows.
	\end{proof}

\subsection{The kernel interpolant is optimal for given function values}

In this subsection we show that the kernel interpolant $f_n$ defined by
\eqref{eq:form}, \eqref{eq:lat_pts} and \eqref{eq:interp} is optimal among
all approximations that use only the same function values of $f$, in the
sense of giving the least possible worst case error measured in any given
norm $\|\cdot\|_{W}$ such that $H\subset W$ for functions in $H$. This is
a special case of a general result for optimal recovery problems in
Hilbert spaces (see for example \cite[Example~1.1]{MicRiv77} and
\cite[Section~3]{MicRiv85}), but for completeness we give a short proof
here. Our proof follows the exposition of \cite[Proof of Theorem
13.5]{Wen05}, but suitably adapted to our setting. 

Let $A_n:H\to H$ be an algorithm (linear or non-linear) that uses as
information about the argument only its values at the points
\eqref{eq:lat_pts}, i.e., it is a mapping of the form $A_n(f)=\mathcal{I}_n
(f(\bst_{1}),\dots,f(\bst_{n}))$ for a mapping $\mathcal{I}_{{n}} \colon
\mathbb{R}^n\to H$. The worst case {$W$-error} for this algorithm is defined
by
\[ e^{\rm wor}(A_n;W) \,:=\, \sup_{f\in H, \, \|f\|_H\le 1} \| f - A_n(f)\|_W. \] \begin{theorem} \label{thm:optimal}
	Let $A_n:H\to H$ be an algorithm \textnormal{(}linear or
	non-linear\textnormal{)} such that $A_n(f)$ uses as information about $f$
	only its values $f(\bst_{1}),\ldots,f(\bst_{n})$ at the points
	\eqref{eq:lat_pts}. For $f\in H$, let $A^*_n(f) := f_n$ be the kernel
	interpolant defined by \eqref{eq:form}, \eqref{eq:lat_pts} and
	\eqref{eq:interp}. Then, for any normed space $W\supset H$ we have
	\begin{equation*}
	e^{\rm wor}(A^*_n;W) \,\le\, e^{\rm wor}(A_n;W).
	\end{equation*}
\end{theorem}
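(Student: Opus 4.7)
The plan is to sandwich both worst-case errors between the same quantity, namely
\[
\calN \,:=\, \sup\bigl\{\|g\|_W \;:\; g\in H,\;\|g\|_H\le 1,\; g(\bst_k)=0 \text{ for all } k=1,\ldots,n\bigr\},
\]
which measures the ``hardest-to-detect'' unit-ball elements given the available sample information. I will show $e^{\rm wor}(A_n^*;W)\le\calN\le e^{\rm wor}(A_n;W)$.

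For the upper bound on $A_n^*$, take any $f\in H$ with $\|f\|_H\le 1$ and set $g:=f-f_n=f-A_n^*(f)$. The interpolation property \eqref{eq:interp} forces $g(\bst_k)=0$ for all $k$, and Pythagoras \eqref{eq:Pythag} gives $\|g\|_H^2=\|f\|_H^2-\|f_n\|_H^2\le 1$. Hence $\|f-A_n^*(f)\|_W=\|g\|_W\le\calN$, and taking the supremum over $f$ yields $e^{\rm wor}(A_n^*;W)\le\calN$.

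For the lower bound on the arbitrary algorithm $A_n$, the key observation is that if $g\in H$ has $g(\bst_k)=0$ for all $k$, then both $g$ and $-g$ feed the same all-zero information into the mapping $\calI_n$, so $A_n(g)=A_n(-g)$. The triangle inequality in $W$ then gives
\[
2\,\|g\|_W \,=\, \|(g-A_n(g))-(-g-A_n(-g))\|_W
\,\le\, \|g-A_n(g)\|_W+\|{-g}-A_n(-g)\|_W,
\]
so at least one of the two terms on the right is $\ge\|g\|_W$. Since $\|{-g}\|_H=\|g\|_H\le 1$, both $g$ and $-g$ are admissible test functions for the worst-case error, and we conclude $e^{\rm wor}(A_n;W)\ge\|g\|_W$. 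Taking the supremum over admissible $g$ gives $e^{\rm wor}(A_n;W)\ge\calN$, completing the chain.

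The argument is short; the only subtle point is the symmetrization step, which exploits linearity of the \emph{sample functional} (values at $\bst_k$) rather than linearity of $A_n$ itself, so it works even for nonlinear algorithms. I do not anticipate significant obstacles, since the Pythagoras identity \eqref{eq:Pythag} and the interpolation identity are already available from the previous subsection.
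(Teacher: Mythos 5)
Your proof is correct and follows essentially the same route as the paper: both arguments rest on the symmetrization step $A_n(g)=A_n(-g)$ for functions $g$ vanishing at the lattice points, combined with the triangle inequality and the Pythagoras identity \eqref{eq:Pythag} to place $f-f_n$ in the admissible set. The only cosmetic difference is that you name the intermediate quantity $\calN$ explicitly, whereas the paper works directly with the set $\mathcal{C}$ of admissible fooling functions.
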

\begin{proof}
	Define $
	\mathcal{C}:=\{g\in H
	\;:\;  \|g\|_H\leq1\text{ and }g(\bst_{k})=0 \mbox{ for all }k=1,\ldots,n\}$.
	For any $g\in \mathcal{C}$ we have
	\begin{align} \label{eq:optimal_proof}
	\|g\|_{W}
	&\le \frac{1}{2}(\|g-A_n(0)\|_{W}+\|g+A_n(0)\|_{W})\nonumber \\
	&\le \max\big(\|g-A_n(0)\|_{W},\|g+A_n(0)\|_{W}\big) \nonumber \\
	&= \max\big(\|g-A_n(g)\|_{W},\|(-g)-A_n(-g)\|_{W}\big)
	\le e^{\rm wor}(A_n;W),
	\end{align}
	where in the penultimate step we used $g(\bst_{k})=0$ for all
	$k=1,\ldots,n$, from which it follows that $A_n(0)= A_n(g) = A_n(-g)$. For any
	$f\in H$ such that $\|f\|_H\leq1$, since $f_n$ is interpolatory, the
	Pythagoras theorem \eqref{eq:Pythag} implies $\|f-f_n\|_{H}\le 1$, and
	hence $f-f_n\in\mathcal{C}$. Thus it follows from \eqref{eq:optimal_proof}
	that
	\[
	\|f-A^*_n(f)\|_W \,=\, \|f-f_n \|_{W} \,\le\, e^{\rm wor}(A_n;W).
	\]
	The theorem now follows.
	\end{proof}

In the above result, we may, for example, take $W=L_{p}$ for any $1\leq
p\leq\infty$.

\section{Lower and upper error bounds}\label{sec:error-analysis}

\subsection{Lower bound on the worst case $L_{p}$ error {($1\leq p \leq \infty$)}}\label{sec:wce-lower-bound}

A recent paper \cite{BKUV17} showed (with a different definition of the
parameter $\alpha$) that the worst case $L_2$ error for an approximation
that uses the points of a rank-$1$ lattice cannot have an order of
convergence better than $n^{-\alpha/4}$ (with our definition of $\alpha$).
Bearing in mind that $H$ is a (Hilbert) space of functions of dominating
mixed smoothness of order ${{\alpha}/{2}}$, this is just half the rate
$n^{-\alpha/2}$ of the best approximation. Since the function space
setting in that paper is rather different from ours (here we use a Fourier
description and a so-called unanchored space, and have introduced weights)
we briefly reprove the main result here, obtaining a sharp lower bound
expressed in terms of the weights. Furthermore, in our setting we make the
result stronger by showing that the same lower bound holds for the worse
case $L_1$ error.

\begin{theorem}\label{thm:lowerbound}
	Let $s\ge 2$. Assume that the weights for the subsets of
	$\{1:s\}$ containing a single
	element satisfy ${\gamma_{\{j\}} } >0$ for all $j\in \{1:s\}$, and that
	$\bsz\in \{0,\ldots,n-1\}^s$ is given. 
	Let $A_n:H\to L_p$ be an
	algorithm
	\textnormal{(}linear or non-linear\textnormal{)} that uses information
	only at the lattice points \eqref{eq:lat_pts} and satisfies $A_n(0)=0$. Then
	for $1\leq p\leq \infty$ the worst case $L_{{p}}$ error for algorithm $A_n$
	satisfies
	\[ e^{\rm wor}(A_n;L_{{p}}) \,\ge\,
	\sqrt{\frac{2}{1/\gamma_{\{{j}\}} + 1/\gamma_{\{{k}\}}}} \,n^{-\alpha/4}
	\quad\text{ for any } j,k\in\{1:s\} \mbox{ with } j\ne k.
	\] In particular, if $\gamma_{\{1\}}\geq \gamma_{\{2\}}\geq\dotsb>0$ then
	\[
	e^{\rm wor}(A_n;L_{p})\geq\sqrt{{{2}/{\big(\gamma_{\{1\}}^{-1} +
				\gamma_{\{2\}}^{-1}\big)}}} \,n^{-\alpha/4}.\]
\end{theorem}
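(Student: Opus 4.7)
The plan is to use a classical ``fooling function'' argument. I will construct a non-zero $g \in H$ that vanishes at every lattice point $\bst_k$, and then exploit the fact that since $A_n$ extracts information only from the values $f(\bst_1),\ldots,f(\bst_n)$ and satisfies $A_n(0)=0$, it must also satisfy $A_n(g)=A_n(0)=0$. This immediately yields $\|g\|_{L_p}=\|g-A_n(g)\|_{L_p}\le e^{\rm wor}(A_n;L_p)\,\|g\|_H$, and the theorem reduces to producing $g$ for which $\|g\|_{L_p}/\|g\|_H$ is at least $\sqrt{2/(\gamma_{\{j\}}^{-1}+\gamma_{\{k\}}^{-1})}\,n^{-\alpha/4}$.

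To construct $g$, I would apply Dirichlet's pigeonhole principle to the $(\lfloor\sqrt n\rfloor+1)^2>n$ residues $(az_j-bz_k)\bmod n$ with $(a,b)\in\{0,\ldots,\lfloor\sqrt n\rfloor\}^2$, then take the difference of two colliding pairs to obtain $(m,m')\ne(0,0)$ with $|m|,|m'|\le\sqrt n$ and $mz_j\equiv m'z_k\pmod n$. The candidate
\[
g(\bsy) \,:=\, \cos(2\pi m y_j)-\cos(2\pi m' y_k)
\]
vanishes on the lattice because at each $\bst_\ell$ the two cosine arguments differ by an integer multiple of $2\pi$. Since $g$ has only four non-zero Fourier coefficients, each of magnitude $\tfrac12$, located at $\pm m\bse_j$ and $\pm m'\bse_k$, Parseval yields $\|g\|_{L_2}=1$ while a direct computation gives
\[
\|g\|_H^2 \,=\, \frac{|m|^\alpha}{2\gamma_{\{j\}}}+\frac{|m'|^\alpha}{2\gamma_{\{k\}}} \,\le\, \frac{n^{\alpha/2}}{2}\Bigl(\frac{1}{\gamma_{\{j\}}}+\frac{1}{\gamma_{\{k\}}}\Bigr).
\]

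For $p\ge 2$ the monotonicity $\|g\|_{L_p}\ge\|g\|_{L_2}=1$ on the probability space $[0,1]^s$ immediately closes the argument, and the ordered-weights corollary follows by setting $j=1,k=2$. The main obstacle is the range $1\le p<2$, where $\|g\|_{L_p}\le 1$ and the naive estimate loses a constant: here my plan is to compute $\|g\|_{L_1}$ explicitly via the substitutions $u=\cos(2\pi m y_j)$ and $v=\cos(2\pi m' y_k)$ (leading to an arcsine-density integral), to interpolate in $p$ by log-convexity of $p\mapsto\log\|g\|_{L_p}$, and if needed to optimise over the larger family $a(\cos(2\pi m y_j)-\cos(2\pi m' y_k))+b(\sin(2\pi m y_j)-\sin(2\pi m' y_k))$ with $a^2+b^2=1$, which has the same $H$-norm and still vanishes on the lattice. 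A secondary technical point, handled via the hypothesis $\gamma_{\{j\}},\gamma_{\{k\}}>0$, is the degenerate case where one of $m,m'$ happens to be zero (which can occur when $\gcd(z_j,n)$ or $\gcd(z_k,n)$ is large).
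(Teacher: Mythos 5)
Your overall strategy is the same as the paper's: a pigeonhole argument producing a nonzero dual-lattice vector supported on the two coordinates $j,k$ with both entries at most $\lfloor\sqrt n\rfloor$, followed by a two-frequency fooling function that vanishes on the lattice, an upper bound on its $H$-norm and a lower bound on its $L_p$-norm. For $p\ge 2$ your real-valued variant is complete and clean: with $m,m'$ both nonzero the four frequencies $\pm m\bse_j,\pm m'\bse_k$ are distinct, Parseval gives $\|g\|_{L_2}=1$ and $\|g\|_H^2=\tfrac{|m|^\alpha}{2\gamma_{\{j\}}}+\tfrac{|m'|^\alpha}{2\gamma_{\{k\}}}$, and monotonicity of $L_p$-norms on a probability space delivers exactly the stated constant $\sqrt2$ — arguably more transparently than the paper does.

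The genuine gap is the range $1\le p<2$, which is precisely the strengthening the paper advertises, and none of your three proposed remedies closes it. (i) The exact computation gives $\|g\|_{L_1}=\int_{[0,1]^2}|\cos(2\pi u)-\cos(2\pi v)|\,\rd u\,\rd v=8/\pi^2\approx0.81<1$, so combined with your $H$-norm bound you obtain the constant $8\sqrt2/\pi^2\approx1.15$ rather than $\sqrt2$. (ii) Log-convexity runs the wrong way: it bounds $\|g\|_{L_2}$ \emph{above} by $\|g\|_{L_1}^\theta\|g\|_{L_q}^{1-\theta}$, and rearranging yields a lower bound on $\|g\|_{L_1}$ that is weaker than the exact value $8/\pi^2$. (iii) Your two-parameter family is vacuous: since $a\cos\theta+b\sin\theta=\cos(\theta-\phi)$ with $a=\cos\phi$, $b=\sin\phi$, every member is $\cos(2\pi my_j-\phi)-\cos(2\pi m'y_k-\phi)$, a translate of $g$ in $\bsy$, with identical $L_p$- and $H$-norms. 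The paper's device for $p<2$ is to take the \emph{complex} difference $q(\bsy)=\rme^{2\pi\rmi h_1^*y_1}-\rme^{-2\pi\rmi h_2^*y_2}$, whose modulus $|\rme^{2\pi\rmi(h_1^*y_1+h_2^*y_2)}-1|=2|\sin(\pi(h_1^*y_1+h_2^*y_2))|$ depends on a single linear combination, so $\|q\|_{L_1}$ reduces to a one-dimensional integral equal to $4/\pi$ while $\|q\|_{H}^2\le(1/\gamma_{\{1\}}+1/\gamma_{\{2\}})\,n^{\alpha/2}$. Note that even this gives only $4/\pi<\sqrt2$ for $p<2$ (the stated constant is really attained by the $L_2$ argument for $p\ge2$), so you should not expect your route to recover $\sqrt2$ there; you should either switch to the complex exponential or accept the degraded constant, stating it explicitly. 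Finally, your degenerate case is not actually handled: if, say, $m'=0$ then $g=\cos(2\pi my_j)-1$ acquires the zero Fourier mode, which contributes $1/\gamma_\emptyset=1$ to $\|g\|_H^2$, and this is not dominated by $\tfrac12(1/\gamma_{\{j\}}+1/\gamma_{\{k\}})n^{\alpha/2}$ when the weights are large; a clean fix is to use $\sin(2\pi my_j)$ alone in that case, which still vanishes on the lattice and has no constant mode.
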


\begin{proof}
	Without loss of generality we assume $\gamma_{\{1\}}\geq
	\gamma_{\{2\}}\geq\dotsb>0$. The heart of the matter is that there exists
	a non-zero integer vector $\bsh^*$ of length $s$ in the $2$-dimensional
	set
	\[
	D_n \,:=\, \left\{(h_1,h_2,0,\ldots,0) \;:\;
	h_j\in \mathbb{Z},\,0\le |h_j| \le \lfloor \sqrt{n} \rfloor,\, j=1,2\right\},
	\]
	such that
	\begin{equation}\label{eq:dual_prop}
	\bsh^*\cdot\bsz \equiv 0 \pmod n.
	\end{equation}
	(In the language of dual lattices, see \cite{SJ94}, there exists a point
	of the dual lattice in $D_n\setminus \{\bszero\}$.) To prove this
	fact, we define $\widetilde{D}_{n}$, the positive quadrant of $D_{n}$, by
	\[
	\widetilde{D}_{n}\,:=\,
	\left\{ (h_{1},h_{2},0,\ldots,0)\;\colon\;h_{j}\in\mathbb{Z},\,0\le h_{j}\le\lfloor\sqrt{n}\rfloor,\,j=1,2\right\} ,
	\]
	noting that if $\bsh,\bsh'\in\widetilde{D}_{n}$ then $\bsh-\bsh'\in
	D_{n}$. Now define
	\[
	E_{n}(\bsz)\,:=\,\{ (\bsh\cdot\bsz\bmod n)\in\{0,\dots,n-1\}\ \colon\ \bsh\in\widetilde{D}_{n}\}.
	\]
	Since $|E_{n}(\bsz)|\leq n$ and
	$|\widetilde{D}_{n}|=(1+\lfloor\sqrt{n}\rfloor)^{2}>n$, it follows from
	the pigeonhole principle that two distinct elements of
	$\widetilde{D}_{n}$, say $\bsh$ and $\bsh'$, yield the same element of
	$E_{n}(\bsz)$; from this it follows that $\bsh^{*}:=\bsh-\bsh'$ satisfies
	\eqref{eq:dual_prop}. 
	A ``fooling function'' is then defined by
	\[
	q(\bsy) \,:=\, \mathrm{e}^{2\pi \mathrm{i} h_1^* \bse_1\cdot\bsy}
	-\mathrm{e}^{-2\pi \mathrm{i} h_2^* \bse_2\cdot\bsy}
	\,=\, \mathrm{e}^{-2\pi \mathrm{i} h_2^* \bse_2\cdot\bsy}
	\big(\mathrm{e}^{2\pi \mathrm{i} \bsh^*\cdot\bsy} - 1\big), \qquad {\bsy \in \mathbb{R}^s},
	\]
	where $\bse_1$ and $\bse_2$ are the unit vectors corresponding to
	variables $1$ and $2$.  By construction, $q$ vanishes at all the lattice
	points \eqref{eq:lat_pts}.  For this function, since the two terms in $q$
	are orthogonal with respect to the inner products in $H =
	H_{s,\alpha,\bsgamma}$, the squared $H$ norm satisfies
	\begin{align*}
	\|q\|_{H}^2 = r(h_1^*\bse_1) + {r}(-h_2^*\bse_2)
	\,=\, \frac{|h_1^*|^{\alpha}}{\gamma_{\{1\}}} + \frac{|-h_2^*|^{\alpha}}{\gamma_{\{2\}}}
	\,\le\, \left(\frac{1}{\gamma_{\{1\}}}+ \frac{1}{\gamma_{\{2\}}}\right)\, n^{\alpha/2}.
	\end{align*}
	On the other hand, the $L_{p}$ norm is bounded from below by
	\begin{align*}
	\|q\|_{L_{p}}\geq\|q\|_{L_{1}}
	&\,=\, \int_{[0,1]^{2}}|\mathrm{e}^{2\pi\mathrm{i}(h_{1}^{*}y_{1}+h_{2}^{*}y_{2})}-1|\,\mathrm{d}y_{1}\,\mathrm{d}y_{2}\\
	&\,=\, 2\int_{[0,1]^{2}}|\sin{(\pi(h_{1}^{*}y_{1}+h_{2}^{*}y_{2}))}|\,\mathrm{d}y_{1}\,\mathrm{d}y_{2}.
	\end{align*}
	This integrand is even with respect to $h_{1}^*$ and $h_{2}^*$ separately,
	so both $h_{1}^*$ and $h_{2}^*$ can be considered as non-negative. First
	assume that both $h_{1}^*$ and $h_{2}^*$ are positive, and partition the
	square into boxes of size $1/h_{1}^*\times1/h_{2}^*$. It is easy to see
	that each box gives the same contribution to the integral, and hence
	\begin{align*}
	2\int_{[0,1]^{2}}|&\sin{(\pi(h_{1}^{*}y_{1}+h_{2}^{*}y_{2}))}|\,\rd y_{1}\,\rd y_{2}
	\\
	&\,=\, 2\,{h_{1}^*h_{2}^*}\int_{0}^{1/{h_{1}^*}}\!\!\!\int_{0}^{1/{h_{2}^*}}|\sin{(\pi(h_{1}^{*}y_{1}+h_{2}^{*}y_{2}))}|
	\,\rd y_{1}\,\rd y_{2}\\
	&\,=\, 2\int_0^1\int_0^1|\sin(\pi(z_1+z_2))|\,\rd z_1 \,\rd z_2
	\,=\, \frac{4}{\pi}.
	\end{align*}
	(For the last step it may be useful to note that the
	integrand in the inner integral is 1-periodic, making the inner integral
	independent of $z_2$.)  If we have $h_{1}^*>0$ and $h_{2}^*=0$ or vice
	versa, we again have $\|q\|_{L_{1}}=4/\pi$. Since $\boldsymbol{h}^*$ is
	non-zero, we obtain
	\begin{equation} \label{eq:rhs}
	\frac{\|q\|_{L_{{p}}}}{\|q\|_H}
	\,\ge\, \frac{4/\pi}{\sqrt{1/\gamma_{\{1\}}+1/\gamma_{\{2\}}}}\, n^{-\alpha/4}.
	\end{equation}

	If we now define $g:= q/\|q\|_H$, then $g$ belongs to the unit ball in $H$
	and vanishes at all the points of the lattice \eqref{eq:lat_pts}, and
	$\|g\|_{L_{p}}$ is bounded {below} by the right-hand side of
	\eqref{eq:rhs}. Since $A_n(g)$ depends on $g$ only through its values at the
	lattice points, and $g$ vanishes at all those points, it follows that
	$A_n(g) = A_n(0) = 0$, with the last step following from the assumption on
	$A_n$.  From the definition of worst case error we conclude that
	$e^{\mathrm{wor}} (A_n,L_p)\ge \|g-A_n(g)\|_{L_p} = \|g\|_{L_p}$, which is
	bounded below by the right-hand side of \eqref{eq:rhs}, completing the
	proof.
	\end{proof}

\subsection{Upper bound on the worst case $L_2$ error}
In this section, we obtain explicit $L_{2}$ error bounds for the kernel
interpolant by using Theorem~\ref{thm:optimal} combined with error bounds
given for an explicit trigonometric polynomial approximation in
\cite{CKNS-part1,CKNS-part2} which extends the construction from
\cite{KSW06,KSW08} to general weights. (An alternative approach to obtain
an upper bound would be to use a ``reconstruction lattice'', see, e.g.,
\cite{BKUV17,KPV15,KMNN}.)

The lattice algorithm $A^\dagger_{n,M}$ applied to a target function $f\in H$
takes the form
\begin{equation} \label{eq:def-AzNM}
(A^\dagger_{n,M}(f))(\bsy) \,:=\, \sum_{\bsh\in\calA_{s}(M)}\bigg(
\frac{1}{n}\sum_{k=1}^{n}
f\biggl({\frac{k\bsz}{n}}\biggr)\rme^{-2\pi\rmi k\bsh\cdot\bsz/n}\bigg)
\rme^{2\pi\rmi\bsh\cdot\bsy},
\end{equation}
which is obtained by applying a lattice integration rule to the Fourier
coefficients in the orthogonal projection onto a finite index set defined
for some parameter $M>0$ by
\begin{equation} \label{eq:AdM}
\calA_{s}(M):=\{\bsh\in\bbZ^{s}:r (\bsh)\leq M\}.
\end{equation}
The error for this algorithm consists of the error from truncation to the
index set $\calA_s(M)$ together with the quadrature error from
approximating those Fourier coefficients with indices $\bsh\in\calA_s(M)$,
leading to a worst case $L_2$ approximating error bound of the form
\begin{align} \label{eq:balance1}
e^{\mathrm{wor}}(A^\dagger_{n,M};L_2) \,\le\, \bigg(\frac{1}{M} + M\, \calS_s(\bsz) \bigg)^{1/2}.
\end{align}
The quantity $\calS_s(\bsz)$ (see \cite{CKNS-part1} for details) can be
used as a search criterion in a component-by-component (CBC) construction
for finding suitable lattice generating vectors $\bsz$, and has the key
advantage that it does \emph{not} depend on the index set $\calA_s(M)$.
The analysis in \cite{CKNS-part1} together with the optimality of the
kernel interpolant (see Theorem~\ref{thm:optimal}) leads to the following
theorem.

\begin{theorem} \label{thm:wce}
	Given $s\ge 1$, $\alpha>1$, weights $(\gamma_\setu)_{\setu\subset\bbN}$
	with $\gamma_\emptyset := 1$, and prime~$n$, the worst case $L_2$
	approximation error of the kernel interpolant $A^*_n(f) = f_n$ defined by
	\eqref{eq:form}, \eqref{eq:lat_pts} and \eqref{eq:interp}, using the
	generating vector $\bsz$ obtained from the CBC construction with search
	criterion $\calS_s(\bsz)$ in \cite{CKNS-part1,CKNS-part2}, satisfies for
	all $\lambda\in (\frac{1}{\alpha},1]$,
	\begin{align} \label{eq:final-err}
	e^{\mathrm{wor}}(A^*_{n};L_2)
	&\le \sqrt{2}\, \big[\calS_s(\bsz) \big]^{1/4} \\
	&\le
	\frac{\kappa}{n^{1/(4\lambda)}} \bigg(
	\sum_{\setu\subseteq\{1:s\}} \!\!
	\max(|\setu|,1)\,\gamma_{\setu}^\lambda\, [2\zeta(\alpha\lambda)]^{|\setu|}\bigg)^{1/{(2\lambda)}},
	\nonumber
	\end{align}
	with $\kappa :=
	\sqrt{2}\,[\max(6,2.5+2^{2\alpha\lambda+1})]^{{{1}/{({4\lambda})}}}$
	and $\zeta(x):=\sum_{k=1}^\infty k^{-x}$ denoting the Riemann zeta
	function
	for $x>1$. Hence
	\[
	e^{\mathrm{wor}}(A^*_n;L_2) \,=\, \calO(n^{-\alpha/4 + \delta})
	\qquad
	\text{for every }\
	\delta\in(0,\alpha/4),
	\]
	where the implied constant depends on $\delta$ but is independent of
	$s$
	provided that
	\[
	\sum_{\substack{\setu\subset\bbN\\ \,|\setu|<\infty}} \max(|\setu|,1)\,\gamma_{\setu}^{\frac{1}{\alpha-4\delta}}\,
	[2\zeta\big(\tfrac{\alpha}{\alpha-4\delta}\big)]^{|\setu|}
	\,<\, \infty.
	\]
\end{theorem}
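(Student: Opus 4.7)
The plan is to combine the optimality principle of Theorem~\ref{thm:optimal} with the explicit error bound \eqref{eq:balance1} for the auxiliary trigonometric polynomial algorithm $A^\dagger_{n,M}$, and then close the argument by importing the CBC bound on the search criterion $\calS_s(\bsz)$ proved in \cite{CKNS-part1,CKNS-part2}.

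First I would specialise Theorem~\ref{thm:optimal} by taking $W=L_2\supset H$ and $A_n=A^\dagger_{n,M}$, which uses only the lattice-point values of $f$ and is therefore an admissible comparison algorithm. This immediately gives $e^{\mathrm{wor}}(A^*_n;L_2)\le e^{\mathrm{wor}}(A^\dagger_{n,M};L_2)$, and combining with~\eqref{eq:balance1} yields
\[
e^{\mathrm{wor}}(A^*_n;L_2)\;\le\;\bigl(\tfrac{1}{M}+M\,\calS_s(\bsz)\bigr)^{1/2}\qquad\text{for every }M>0.
\]
Optimising the right-hand side over $M>0$ (elementary AM--GM, with optimum at $M=\calS_s(\bsz)^{-1/2}$) produces the first displayed inequality $e^{\mathrm{wor}}(A^*_n;L_2)\le\sqrt{2}\,[\calS_s(\bsz)]^{1/4}$. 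Note that the key benefit here is that one never needs to know or choose $M$ explicitly in the kernel interpolation algorithm itself; $M$ plays a role only inside the comparison algorithm $A^\dagger_{n,M}$.

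Next, for the second inequality I would insert the CBC error bound from \cite{CKNS-part1,CKNS-part2}, which states that when $n$ is prime the CBC-constructed $\bsz$ satisfies, for every $\lambda\in(1/\alpha,1]$,
\[
\calS_s(\bsz)\;\le\;\bigg(\frac{\max(6,\,2.5+2^{2\alpha\lambda+1})}{n}\bigg)^{\!1/\lambda}\bigg(\sum_{\setu\subseteq\{1:s\}}\max(|\setu|,1)\,\gamma_\setu^{\lambda}\,[2\zeta(\alpha\lambda)]^{|\setu|}\bigg)^{\!2/\lambda}.
\]
Raising both sides to the $1/4$ power and multiplying by $\sqrt{2}$ delivers the second displayed inequality with the stated constant $\kappa=\sqrt{2}\,[\max(6,\,2.5+2^{2\alpha\lambda+1})]^{1/(4\lambda)}$.

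Finally, for the asymptotic statement I would choose $\lambda$ so as to balance $1/(4\lambda)$ against $\alpha/4-\delta$: set $\lambda=1/(\alpha-4\delta)$ whenever this value lies in $(1/\alpha,1]$ (which happens for $\delta\in(0,(\alpha-1)/4]$), and otherwise take $\lambda=1$. In either case $1/(4\lambda)\ge\alpha/4-\delta$, so the decay in $n$ is at least $n^{-\alpha/4+\delta}$; meanwhile the hypothesised convergence of the infinite sum with exponent $1/(\alpha-4\delta)$ on $\gamma_\setu$ and $\alpha/(\alpha-4\delta)$ inside the zeta function keeps the prefactor finite and independent of $s$. There is no substantial obstacle in this argument --- each piece is either Theorem~\ref{thm:optimal}, the elementary minimisation in $M$, a citation to the CBC analysis, or a choice of $\lambda$. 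The main conceptual step, rather than a technical difficulty, is to recognise that the trigonometric projection $A^\dagger_{n,M}$ can be used as a foil for the kernel interpolant, so that the $\calS_s$-based CBC theory developed for trigonometric approximation transfers cleanly to the interpolation setting.
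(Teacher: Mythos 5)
Your proposal is correct and follows essentially the same route as the paper: optimality of the kernel interpolant (Theorem~\ref{thm:optimal}) applied with $A_n=A^\dagger_{n,M}$, minimisation of the bound \eqref{eq:balance1} at $M=\calS_s(\bsz)^{-1/2}$, the CBC bound on $\calS_s(\bsz)$ from \cite[Theorem~3.5]{CKNS-part1}, and the choice $\lambda=1/(\alpha-4\delta)$ for the asymptotic rate. Your extra remark covering the case $1/(\alpha-4\delta)>1$ is a small refinement the paper leaves implicit, but otherwise the arguments coincide.
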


\begin{proof}
	The optimality of the kernel interpolant established in
	Theorem~\ref{thm:optimal} means that $e^{\mathrm{wor}}(A^*_n;L_2) \le
	e^{\mathrm{wor}}(A^\dagger_{n,M};L_2)$ for all $M$, and therefore the
	upper bound in \eqref{eq:balance1} also serves as an upper bound for the
	kernel interpolant. It is easy to verify that the bound in
	\eqref{eq:balance1} can be minimized by setting $M =
	\sqrt{1/\calS_s(\bsz)}$, leading to \eqref{eq:final-err}. The subsequent
	bound follows from \cite[Theorem~3.5]{CKNS-part1}. The big-$\calO$ bound
	is then obtained by taking $\lambda = 1/(\alpha-4\delta)$.
	\end{proof}

From this result (which by Theorem~\ref{thm:lowerbound} is almost best possible with respect
to {the} order of convergence) we immediately obtain an error bound for the
kernel interpolant.

\begin{theorem} \label{thm:wce2}
	Under the conditions of Theorem~\ref{thm:wce}, and with lattice generating
	vector $\bsz$ obtained by the CBC construction in
	\cite{CKNS-part1,CKNS-part2}, for any $f\in H$, we have for the kernel
	interpolant $f_n$ defined by \eqref{eq:form}, \eqref{eq:lat_pts} and
	\eqref{eq:interp},
	\begin{align*}
	\|f-f_n\|_{L_{2}}
	&\,\le\, \frac{\kappa}{n^{1/(4\lambda)}} \bigg(
	\sum_{\setu\subseteq\{1:s\}} \max(|\setu|,1)\,\gamma_{\setu}^\lambda\, [2\zeta(\alpha\lambda)]^{|\setu|}
	\bigg)^{1/(2\lambda)}\,
	\|f\|_H.
	\end{align*}
\end{theorem}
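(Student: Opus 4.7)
The plan is to derive the stated inequality as an immediate corollary of Theorem~\ref{thm:wce} by exploiting the \emph{linearity} of the kernel interpolation map $f \mapsto A^*_n(f) = f_n$. First I would verify this linearity: the coefficient vector $\bsa = (a_1,\dots,a_n)^\top$ in \eqref{eq:form} is obtained by solving the linear system \eqref{eq:lin_sys}, so $\bsa$ depends linearly on the data vector $(f(\bst_1),\dots,f(\bst_n))^\top$, which in turn depends linearly on $f$; since the representation \eqref{eq:form} is linear in $\bsa$, the map $f \mapsto f_n$ is linear from $H$ to $H$.

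Next I would combine this with the definition of the worst case $L_2$ error. For any $f\in H$ with $f\neq 0$, the function $\widetilde f := f/\|f\|_H$ lies in the unit ball of $H$, and by linearity
\[
\widetilde f - A^*_n(\widetilde f) \,=\, \frac{1}{\|f\|_H}\bigl(f - A^*_n(f)\bigr)
\,=\, \frac{f - f_n}{\|f\|_H}.
\]
Taking $L_2$ norms and using the definition of $e^{\mathrm{wor}}(A^*_n;L_2)$ yields
\[
\|f - f_n\|_{L_2} \,\le\, e^{\mathrm{wor}}(A^*_n;L_2)\,\|f\|_H,
\]
which of course also holds trivially when $f = 0$.

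Finally I would invoke Theorem~\ref{thm:wce} to bound $e^{\mathrm{wor}}(A^*_n;L_2)$ by the explicit quantity appearing in \eqref{eq:final-err}, namely
\[
\frac{\kappa}{n^{1/(4\lambda)}}
\bigg(\sum_{\setu\subseteq\{1:s\}} \max(|\setu|,1)\,\gamma_{\setu}^\lambda\, [2\zeta(\alpha\lambda)]^{|\setu|}\bigg)^{1/(2\lambda)},
\]
valid for any $\lambda\in(1/\alpha,1]$ when $\bsz$ is constructed by the CBC algorithm of \cite{CKNS-part1,CKNS-part2}. Substituting this upper bound into the previous display yields the claim.

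There is essentially no technical obstacle: the entire statement is a reformulation of Theorem~\ref{thm:wce} as a pointwise-in-$f$ bound. The only conceptual point that needs to be made carefully is the linearity of $A^*_n$, which was not explicitly stated earlier but follows at once from the fact that the interpolation matrix $\calK$ in \eqref{eq:lin_sys} is independent of $f$ and invertible (it is a symmetric positive definite circulant matrix, being a Gram matrix of linearly independent reproducing kernels at distinct lattice points). The proof is therefore a two-line argument.
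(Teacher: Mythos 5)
Your proposal is correct and matches the paper's intent exactly: the paper states this theorem as an immediate consequence of Theorem~\ref{thm:wce}, relying on precisely the homogeneity/linearity argument you spell out, namely that $\|f-f_n\|_{L_2}\le e^{\mathrm{wor}}(A^*_n;L_2)\,\|f\|_H$ followed by the bound \eqref{eq:final-err}. Your explicit verification of the linearity of $A^*_n$ (via the invertibility of the positive definite circulant Gram matrix) is a detail the paper leaves implicit, but it is the right justification.
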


We stress again that the CBC construction in \cite{CKNS-part1,CKNS-part2}
does \emph{not} require the explicit construction of the index set
$\calA_s(M)$ in order to determine an appropriate generating vector
$\bsz$. However, the expression $\calS_s(\bsz)$ (see \cite{CKNS-part1} for
details) used as the search criterion does depend in a complicated way on
the weights $\gamma_\setu$, and therefore the target dimension $s$ needs
to be fixed at the start of the CBC construction (except for the case of
product weights). For weights with no special structure, the computational
cost will be exponentially large in $s$. We consider some special forms of
weights:
\begin{itemize}
	\item[\textbullet] \textbf{Product weights}: $\gamma_\setu \,=\, \prod_{j\in\setu}
	\gamma_j$, specified by one sequence ${(\gamma_j)}_{j\ge 1}$.
	\item[\textbullet] \textbf{POD weights} (product and order dependent):
	$\gamma_\setu \,=\, \Gamma_{|\setu|} \prod_{j\in\setu} \gamma_j$,
	specified by two sequences ${(\Gamma_\ell)}_{\ell\ge 0}$ and
	${(\gamma_j)}_{j\ge
		1}$.
	\item[\textbullet] \textbf{SPOD weights} (smoothness-driven product and order
	dependent) with degree $\sigma\ge 1$:
	\begin{align*}
	\gamma_\setu \,=\, \sum_{\bsnu_\setu\in \{1:\sigma\}^{|\setu|}} \Gamma_{|\bsnu_\setu|} \prod_{j\in\setu} \gamma_{j,\nu_j},
	\end{align*}
	specified by the sequences ${(\Gamma_\ell)}_{\ell\ge 0}$ and
	${(\gamma_{j,\nu})}_{j\ge 1}$ for each $\nu=1,\ldots,\sigma$,
	where $|\bsnu_\setu| :=\sum_{j\in\setu} \nu_j$.
\end{itemize}
Fast CBC construction of lattice generating vector for $L_2$ approximation
has the cost of
\begin{align*}
&\calO\big(s\,n\log(n)\big) && \mbox{for product weights}, \\
&\calO\big(s\,n\log(n) + s^2\log(s) n\big) && \mbox{for POD weights}, \\
&\calO\big(s\,n\log(n) + s^3\sigma^2 n\big) && \mbox{for SPOD weights with degree $\sigma\ge 2$},
\end{align*}
plus storage cost and  pre-computation cost for POD and SPOD
weights, see~\cite{CKNS-part2}.

\section{Application to PDEs with random coefficients}\label{sec:PDE}

As an application, we apply our kernel interpolation scheme to a
forward uncertainty quantification problem, namely, a PDE problem with an
uncertain, {\em	periodically} parameterized diffusion coefficient, fitting
the theoretical framework considered in the preceding sections. The
kernel interpolant can be postprocessed with low computational cost to
obtain statistics of the PDE solution itself or functionals of the
solution for uncertainty quantification.

Letting $D\subset\mathbb{R}^d$, $d\in\{1,2,3\}$, be a bounded domain with
Lipschitz boundary, we consider the problem of finding $u\colon D\times
\Omega\to\mathbb{R}$ that satisfies
\begin{align}\label{eq:pdestrong}
-\nabla\cdot (a(\bsx,\omega)\,\nabla u(\bsx,\omega)) &\,=\, q(\bsx),&& \bsx\in D,\\
u(\bsx,\omega) &\,=\, 0, && \bsx\in \partial D,\label{eq:pdestrong2}
\end{align}
for almost all events $\omega\in\Omega$ in the probability space
$(\Omega,\mathscr{A},\mathbb{P})$ with
\begin{equation}\label{qqq}
a(\bsx,\omega) \,=\,
a_0(\bsx)+\frac{1}{\sqrt{6}}\sum_{j\geq 1}\sin(2\pi Y_j(\omega))\,\psi_j(\bsx),\qquad \bsx\in D,~\omega\in\Omega,
\end{equation}
where $a_0\in L_\infty(D)$, $\psi_j\in L_\infty(D)$ for all $j\geq 1$ are
such that $\sum_{j\ge 1}|\psi_j(\bsx)| < \infty$ for any $\bsx\in D$,
and $Y_1,Y_2,\ldots$~are i.i.d.~random variables uniformly distributed on
$[-\frac12,\frac12]$.
This type of random field is not new in the context
of uncertainty quantification. Indeed,
the random variable $\sin(2\pi Y_j(\omega))$ induces the arcsine measure
as its distribution: for if $Y(\omega)$
is uniformly distributed on $[-\tfrac{1}{2}, \tfrac{1}{2}]$, then
$Z(\omega):=\sin(2\pi Y(\omega))$  has the probability density
$\tfrac{1}{\pi}\tfrac{1}{\sqrt{1-z^{2}}}$ on $[-1,1]$.
Thus, $a$ is identical, up to the law
{to the random field
	\begin{equation}\label{ppp}
	\hat{a}(\bsx,\omega)=a_0(\bsx)+\frac{1}{\sqrt{6}}\sum_{j\ge 1}Z_j(\omega)\psi_j(\bsx)
	\end{equation}
	with $Z_j$ i.i.d.\ random variables with arcsine distribution on $[-1,1]$. Expression \eqref{ppp} would be the starting point for deriving a polynomial chaos approximation \cite{XK02} of the solution in terms of Chebyshev polynomials of the first kind \cite{Rauhut.H_Schwab_2016_Cheb}. In this paper, however, we want to exploit periodicity, hence we consider rather the formulation \eqref{qqq} and a different approximation method based on kernel interpolation.}

{
	Since the expression \eqref{qqq} is periodic in the random variable $Y_j$, we can shift those random variables so that their range is $[0,1]$ instead of $[-\frac{1}{2},\frac{1}{2}]$, i.e., we consider the equivalent parametric space
	$
	U \,:=\, [0,1]^{\mathbb{N}}.
	$}
Let $\mathcal{B}(U)$ be the Borel $\sigma$-algebra
corresponding to the product topology on $U=[0,1]^{\mathbb{N}}$,
and equip $(U,\mathcal{B}(U))$ with the product uniform measure; see, for example, \cite{Rogers.L.C.G_Williams_2000_book_I} for details. The weak formulation of~\eqref{eq:pdestrong}--\eqref{eq:pdestrong2} can
then be stated parametrically as: for $\bsy\in U$, find $u(\cdot,\bsy)\in
H_0^1(D)$ such that
\begin{align}
\int_D {a}(\bsx,\bsy)\,\nabla u(\bsx,\bsy)\cdot \nabla \phi(\bsx)\,{\rm d}\bsx
\,=\,
\langle q,\phi\rangle_{H^{-1}(D),H^1_0(D)},
\quad \forall\phi\in H_0^1(D),\label{eq:pdeweak}
\end{align}
where the datum $q\in H^{-1}(D)$ is fixed and the diffusion coefficient is given by
\begin{equation} \label{eq:a-param}
a(\bsx,\bsy) \,=\,
a_0(\bsx)+\frac{1}{\sqrt{6}}\sum_{j\geq 1}\sin(2\pi y_j)\,\psi_j(\bsx),
\qquad \bsx\in D,~\bsy \in U.
\end{equation}
Here $H^1_0(D)$ denotes the subspace of the $L_2$-Sobolev space $H^1(D)$
with vanishing trace on $\partial D$, and $H^{-1}(D)$ denotes the
topological dual of $H^1_0(D)$, and $\langle\cdot,\cdot\rangle_{H^{-1}(D),H^1_0(D)}$ denotes the
duality pairing between $H^{-1}(D)$ and $H_0^1(D)$. We endow the Sobolev
space $H_0^1(D)$ with the norm $\|v\|_{H_0^1(D)}:=\|\nabla v\|_{L_2(D)}$.

Since we now have two sets of variables
$\bsx\in D$ and $\bsy\in U$, from here on we will make the domain $D$ and
$U$ explicit in our notation. We state the following assumptions and refer
to them as they become needed:
\setlength{\leftmargini}{2.5em}
\begin{itemize}
	\item[(A1)] $a_0\in L_\infty(D)$, $\psi_j\in L_\infty(D)$ for all
	$j\geq 1$, and $\sum_{j\ge 1} \|\psi_j\|_{L_\infty(D)}< \infty$;
	\item[(A2)] there exist positive constants $a_{\min}$ and $a_{\max}$ such that $0<a_{\min}\leq a(\bsx,\bsy)\leq a_{\max}<\infty$ for all $\bsx\in D$ and $\bsy\in U$;
	\item[(A3)] $\sum_{j\geq 1}\|\psi_j\|_{L_\infty(D)}^p<\infty$ for some
	$0<p<1$;
	\item[(A4)] $a_0\in W^{1,\infty}(D)$ and $\sum_{j\geq 1}\|\psi_j\|_{W^{1,\infty}(D)}<\infty$, where
	$$
	\|v\|_{W^{1,\infty}(D)}:=\max\{\|v\|_{L_\infty(D)},\|\nabla v\|_{L_\infty(D)}\};
	$$
	\item[(A5)] $\|\psi_1\|_{L_\infty(D)}\geq \|\psi_2\|_{L_\infty(D)}\geq \cdots$;
	\item[(A6)] the physical domain $D\subset\mathbb{R}^d$, $d\in \{1,2,3\}$, is a convex and bounded polyhedron with plane faces.
\end{itemize}

{\sloppy Let assumptions \textup{(A1)} and \textup{(A2)} be in effect. Then the Lax--Milgram lemma~\cite{ciarlet} implies unique solvability of the problem~\eqref{eq:pdeweak} for all $\bsy\in U$, with the solution satisfying the {\em a priori} bound
	\begin{align}
	\|u(\cdot,\bsy)\|_{H_0^1(D)}\leq \frac{\|q\|_{H^{-1}(D)}}{a_{\min}}\qquad\text{for all}~\bsy\in U.\label{eq:apriori}
	\end{align}
}Moreover, from the recent paper {\cite[Theorem 2.3]{KKS}} we know, after
differentiating the PDE \eqref{eq:pdeweak}, that the mixed derivatives of
the PDE solution are 1-periodic and bounded by
\begin{align} \label{eq:pde-der}
\|\partial_{\bsy}^{\bsnu} u(\cdot,\bsy)\|_{H^1_0(D)} \le
\frac{\|q\|_{H^{-1}(D)}}{a_{\min}}(2\pi)^{|\boldsymbol{\nu}|}
\sum_{\bsm\leq\boldsymbol{\nu}}|\bsm|!
\prod_{{j}\geq 1}
(b_j^{m_j}S(\nu_{j},m_{j}))
\end{align}
for all $\bsy\in U$ and all multindices $\bsnu\in\mathbb{N}_0^\infty$
with finite order $|\bsnu|:= \sum_{j\ge 1} \nu_j <\infty$, and we define
\begin{align} \label{eq:def-bj}
b_j \,:=\, \frac{1}{\sqrt{6}}\frac{\|\psi_j\|_{L_\infty(D)}}{ a_{\min}}\qquad
\text{ for all }~j\geq 1.
\end{align}
Furthermore, $S(\sigma,m)$ denotes the \emph{Stirling number of the
	second kind} for integers $\sigma\geq m\geq 0$, with the convention that
$S(\sigma,0)=\delta_{\sigma,0}$. In \cite{KKS} we considered a function
space with respect to $\bsy$ with a supremum norm rather than an
$L_2$-based norm, so here we need to write down the relevant $L_2$-based
norm bound instead. Moreover, we want to approximate the solution $u$
directly, rather than a bounded linear functional $G(u)$ of the PDE
solution.

For our proposed approximation scheme, we require the target function to
be pointwise well-defined with respect to both the physical variable and
the parametric variable. In terms of our PDE application, this can be
achieved either by assuming additional regularity of both the diffusion
coefficient $a$ and the source term $q$ or, alternatively, by analyzing
instead the construction of the kernel interpolant for the finite element
approximation of $u$ (which is naturally pointwise well-defined
everywhere). Here we focus on the latter case, in which the kernel
interpolant is crafted for the finite element approximation of $u$. This
is also the setting that arises in practical computations, where one only
ever has access to a numerical approximation of the solution
to~\eqref{eq:pdeweak}, with the diffusion coefficient~\eqref{eq:a-param}
truncated to a finite number of terms. To this end, we split our analysis
into three parts: \emph{dimension truncation error}, \emph{finite element
	error}, and \emph{kernel interpolation error}.

\subsection{Dimension truncation error}\label{sec:dimtruncerror}

In anticipation of the forthcoming discussion we define the dimensionally
truncated solution of~\eqref{eq:pdeweak} as
\[
u_s(\cdot,\bsy)
\,:=\, u_s(\cdot,(y_1,\ldots,y_s))
\,:=\, u(\cdot,(y_1,\ldots,y_s,0,0,\ldots)),\qquad \bsy\in U.
\]
Moreover, let us introduce the shorthand notations $U_s:=U_{\leq s}:=[0,1]^s$, $U_{>s}:=\{(y_j)_{j\geq s+1}: y_j\in [0,1]\}$, and $\bsy_{>s}:=(y_{s+1},y_{s+2},\ldots)$.

For an $\bbR$-valued function on $U$ that is Lebesgue integrable with  respect to the uniform measure on $\mathcal{B}(U)$, we use the notation $\int_{U}F(\bsy)\,{\rm d}\bsy$ for the integral of $F$ over $U$. Similarly, for an integrable function $\tilde{F}$ on $U_{> s}$, we denote the integral over $U_{> s}$ with respect to the uniform measure by  $\int_{U_{> s}}\tilde{F}(\bsy_{>s})\,{\rm d}\bsy_{>s}$.

Arguing as in~\cite[Theorem~5.1]{Kuo.F_Schwab_Sloan_2012_SINUM}, it is  not difficult to see that \[\sup_{\bsy\in U}\|u(\cdot,\bsy)-u_s(\cdot,\bsy)\|_{H_0^1(D)}=	\mathcal{O}(s^{-1/p+1})\] holds under assumptions (A1)--(A3) and (A5).
In what follows, we consider the dimension truncation error in the $L_2$-norm in the stochastic parameter, and establish the rate $\mathcal{O}(s^{-{1}/{p}+{1}/{2}})$, which is one half order better.
This case does not appear to have been considered in the existing literature.
Notably, this rate is only half that of the rate proved in~\cite{KKS} for integration problem with respect to $\bsy$:
\begin{equation*}
\bigg|\int_U G(u(\cdot,\bsy)-u_s(\cdot,\bsy))\,{\rm d}\bsy\bigg|=\mathcal{O}(s^{-2/p+1}),\qquad G\in H^{-1}(D).
\end{equation*}

\begin{sloppypar}
	We will establish a dimension truncation error for a general class of parametrized random fields that includes~\eqref{eq:a-param},
	without the periodicity assumption. Our proof adapts the argument by Gantner~\cite{gantnermcqmc2018} to the  $L^2(U;H^1_0(D))$-norm estimate.
\end{sloppypar}
\begin{theorem}\label{thm:dimensiontruncation}
	Suppose that \textup{(A1)}, \textup{(A3)} and \textup{(A5)}
	hold. Let $\xi:[0,1]\to\mathbb{R}$ be an  $L_\infty([0,1])$-function
	such that
	\begin{equation}
	\int_{0}^{1}\xi(y)\,{\rm d}y=0.\label{eq:property-xi}
	\end{equation}
	Suppose further that the function
	\begin{equation}
	a(\bsx,\bsy)\,=\,a_{0}(\bsx)+\sum_{j\geq1}\xi(y_{j})\,\psi_{j}(\bsx),\qquad\bsx\in D,~\bsy\in U,
	\label{eq:a-tilde}
	\end{equation}
	satisfies $\textup{(A2)}$. Then for any $s\in\mathbb{N}$, there
	exists a constant $C>0$ such that
	\begin{align*}
	\sqrt{\int_{U}\int_{D}(u(\bsx,\bsy)-u_{s}(\bsx,\bsy))^{2}\,\mathrm{d}\bsx\,\mathrm{d}\bsy}
	&\leq c_{D}\sqrt{\int_{U}\int_{D}|\nabla(u-u_{s})|^{2}\,{\rm d}\bsx\,{\rm d}\bsy}\\
	&\leq\,C\,\|q\|_{H^{-1}(D)}\,s^{-(\frac{1}{p}-\frac{1}{2})},
	\end{align*}
	where $u\in H_{0}^{1}(D)$ denotes the solution of the equation \eqref{eq:pdeweak}
	but with {$a(\bsx, \bsy)$ given by}  \eqref{eq:a-tilde}, $u_{s}\in H_{0}^{1}(D)$
	denotes the corresponding dimensionally truncated solution, {$c_D>0$ is the Poincar\'{e} constant of the embedding
		$H_0^1(D)\hookrightarrow L_2(D)$,} and the
	constant $C>0$ is independent of $s$ and $q$.
\end{theorem}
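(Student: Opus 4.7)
\medskip
\noindent\textbf{Proof plan.} The first inequality is a direct application of the Poincar\'e inequality $\|v\|_{L_2(D)}\le c_D\|\nabla v\|_{L_2(D)}$ on $H_0^1(D)$, applied pointwise in $\bsy$ and then squared and integrated over $U$, so I focus on bounding $\|\nabla(u-u_s)\|_{L_2(U;L_2(D))}$ by $C\|q\|_{H^{-1}(D)}\,s^{-(1/p-1/2)}$. My starting point is the standard energy estimate for the parametric error $w_s:=u-u_s$. Subtracting the weak forms for $u(\cdot,\bsy)$ and $u_s(\cdot,\bsy)$---the latter with coefficient $a_s(\cdot,\bsy):=a(\cdot,(y_1,\ldots,y_s,0,0,\ldots))$---testing with $\phi=w_s$, and using coercivity from \textup{(A2)}, I obtain
\[
a_{\min}\|w_s(\cdot,\bsy)\|_{H_0^1(D)}^{2}\le-\int_D(a-a_s)\,\nabla u_s\cdot\nabla w_s\,\rd\bsx.
\]
Integrating over $\bsy\in U$ and writing $a-a_s=\sum_{j>s}\xi(y_j)\psi_j(\bsx)$---adopting, consistent with the fact that $\xi\in L_\infty([0,1])$ is defined only up to null sets, the convention that $u_s$ is the solution whose coefficient truncates the series at $j=s$ so that $a-a_s$ has mean zero in every $y_j$ for $j>s$---reduces the task to controlling $\sum_{j>s}\bigl|\int_U\xi(y_j)\,g_j(\bsy)\,\rd\bsy\bigr|$, where $g_j(\bsy):=\int_D\psi_j\,\nabla u_s\cdot\nabla w_s\,\rd\bsx$.

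The key step, adapted from~\cite{gantnermcqmc2018}, is to exploit $\int_0^1\xi=0$ by a variance-type estimate in the single variable $y_j$. Since $u_s$ is independent of $y_j$ for $j>s$, subtracting the $y_j$-mean $\overline{g_j}$ of $g_j$ leaves the integral against $\xi(y_j)$ unchanged, and Cauchy--Schwarz in $L_2(U)$ gives $\bigl|\int_U\xi(y_j)g_j\rd\bsy\bigr|\le\|\xi\|_{L_2([0,1])}\|g_j-\overline{g_j}\|_{L_2(U)}$. Next, applying the energy estimate to the difference of $u$ at two values $y_j$ and $y_j'$ (all other variables fixed) yields the finite-difference bound $\|\nabla u(\cdot,\bsy)-\nabla u(\cdot,\bsy')\|_{L_2(D)}\le(\|q\|_{H^{-1}(D)}/a_{\min}^{2})\|\psi_j\|_{L_\infty(D)}|\xi(y_j)-\xi(y_j')|$, whence Cauchy--Schwarz in $D$ together with the a priori bound~\eqref{eq:apriori} gives $|g_j(\bsy)-g_j(\bsy')|\le C\|\psi_j\|_{L_\infty(D)}^{2}|\xi(y_j)-\xi(y_j')|$ with $C\le\|q\|_{H^{-1}(D)}^{2}/a_{\min}^{3}$. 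Substituting this into the identity $\mathrm{Var}_{y_j}[g_j]=\tfrac12\iint|g_j(y_j)-g_j(y_j')|^{2}\rd y_j\,\rd y_j'$ and integrating over the remaining variables bounds $\|g_j-\overline{g_j}\|_{L_2(U)}^{2}$ by a constant times $\|\xi\|_{L_2}^{2}\|\psi_j\|_{L_\infty(D)}^{4}\|q\|_{H^{-1}(D)}^{4}/a_{\min}^{6}$, so each $j$-th contribution is controlled by $C\|q\|_{H^{-1}(D)}^{2}\|\psi_j\|_{L_\infty(D)}^{2}$. Summing over $j>s$ and using \textup{(A3)}+\textup{(A5)}---which imply $\|\psi_j\|_{L_\infty(D)}\le C j^{-1/p}$ and hence $\sum_{j>s}\|\psi_j\|_{L_\infty(D)}^{2}=O(s^{1-2/p})$ since $p<1$ forces $2/p>2$---yields $\int_U\|w_s\|_{H_0^1(D)}^{2}\rd\bsy\le C\|q\|_{H^{-1}(D)}^{2}s^{1-2/p}$, and the claim follows by taking square roots.

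The hard part is keeping the variance estimate purely finite-difference, so that no regularity of $\xi$ beyond its $L_\infty$ bound and the mean-zero condition is needed: the naive Poincar\'e--Wirtinger alternative would require $\partial_{y_j}g_j$ and therefore $\xi'$, which is unavailable under the hypotheses. Replacing the derivative by the explicit finite-difference variance bound circumvents this and keeps the constant dependent on $\xi$ only through $\|\xi\|_{L_\infty}$ and $\|\xi\|_{L_2}$. A secondary subtlety is the interpretation of $u_s(\cdot,\bsy)=u(\cdot,(y_1,\ldots,y_s,0,\ldots))$ when $\xi$ is only defined modulo null sets; reading $u_s$ as the solution with the series in $a$ truncated at $j=s$ makes the decomposition $a-a_s=\sum_{j>s}\xi(y_j)\psi_j$ genuinely mean-zero in each $y_j$, $j>s$, and the estimate above then applies verbatim without an extra deterministic remainder.
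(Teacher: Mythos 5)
Your proof is correct, and it takes a genuinely different route from the paper's. The paper follows Gantner's strategy adapted to the $L_2(U;H_0^1(D))$ norm: it expands $u-u_s=\sum_{k\ge1}(-\mathscr{B}_s)^k u_s$ as a Neumann series in the tail operator $\mathscr{B}_s=\sum_{i>s}\xi(y_i)(B^s)^{-1}B_i$, rewrites $\langle u-u_s,u-u_s\rangle_{\bsy_{\le s}}$ as a sum over multi-indices supported on $\{s+1,s+2,\dots\}$, uses $\int_0^1\xi=0$ to annihilate every multi-index having some component equal to $1$, and then splits the surviving terms at a cut-off order $m^*=\lceil\frac{2-p}{1-p}\rceil$ into a part controlled by $\sum_{j>s}b_j^2=\calO(s^{1-2/p})$ and a geometric tail $(2\sum_{j>s}b_j)^{m^*}$. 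You replace all of this by a single energy identity for $w_s=u-u_s$ followed by one variance/Cauchy--Schwarz step per tail coordinate: subtracting the $y_j$-average of $g_j$ is legitimate precisely because of \eqref{eq:property-xi}, and your finite-difference Lipschitz bound $|g_j(\bsy)-g_j(\bsy')|\le (\|q\|_{H^{-1}(D)}^2/a_{\min}^3)\|\psi_j\|_{L_\infty(D)}^2|\xi(y_j)-\xi(y_j')|$ (which correctly avoids any appeal to $\xi'$) gives $\|g_j-\overline{g_j}\|_{L_2(U)}\lesssim\|q\|_{H^{-1}(D)}^2\|\psi_j\|_{L_\infty(D)}^2$. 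Both arguments land on $\int_U\|\nabla(u-u_s)\|_{L_2(D)}^2\,\rd\bsy\lesssim\|q\|_{H^{-1}(D)}^2\sum_{j>s}\|\psi_j\|_{L_\infty(D)}^2=\calO(s^{1-2/p})$, hence the same rate with a constant independent of $s$. What the paper's heavier machinery buys is a template in which the mean-zero cancellation is exploited at every order of the expansion, which is what one needs for the full-order gain $\calO(s^{-2/p+1})$ in the integration setting; for the present $L_2$-in-$\bsy$ estimate your energy-plus-variance argument is shorter and more elementary. Your side remark about the meaning of $u_s$ is also apt: the paper's proof likewise works with the truncated-series operator $B^s=B_0+\sum_{k\le s}\xi(y_k)B_k$ rather than literally with $a(\bsx,(y_1,\dots,y_s,0,\dots))$; the two coincide when $\xi(0)=0$, as in the motivating case $\xi=\frac{1}{\sqrt{6}}\sin(2\pi\cdot)$, and the coercivity of the truncated coefficient follows in general by noting that it is the average of $a(\bsx,(\bsy_{\le s},\bsy_{>s}))$ over $\bsy_{>s}$.
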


\begin{proof}
	We begin by introducing some helpful notations. For $\bsy\in U$,
	let us define the operators $B,B^{s}\colon H_{0}^{1}(D)\to H^{-1}(D)$
	by
	\begin{align*}
	B\,:=\,B(\bsy)\,:=\,B_{0}+\sum_{k=1}^{\infty}\xi(y_{k})B_{k}\quad\text{and}\quad B^{s}\,:=\,B^{s}(\bsy)\,:=\,B_{0}+\sum_{k=1}^{s}\xi(y_{k})B_{k},
	\end{align*}
	where the operators $B_{k}\colon H_{0}^{1}(D)\to H^{-1}(D)$ are defined
	by \[\langle B_{0}v,w\rangle_{H^{-1}(D),H_{0}^{1}(D)}:=\langle a_{0}\nabla v,\nabla w\rangle_{L_{2}(D)}\]
	and $\langle B_{k}v,w\rangle_{H^{-1}(D),H_{0}^{1}(D)}:=\langle\psi_{k}\nabla v,\nabla w\rangle_{L_{2}(D)}$
	for $v,w\in H_{0}^{1}(D)$ and $k\geq1$. This allows the equation \eqref{eq:pdeweak} with the coefficient $a$ given by~\eqref{eq:a-tilde}
	to be written as $Bu=q$. It is easy to see that the assumptions (A1)
	{and (A2)} ensure that both $B(\bsy)$ and $B^{s}(\bsy)$ are
	boundedly invertible linear maps for all $\bsy\in U$, with the norms
	of $B$ and ${B^{s}}$ both bounded by $a_{{\max}}$, and the
	norms of both $B^{-1}$ and ${(B^{s})^{-1}}$ bounded by $a_{{\min}}^{-1}$.
	Thus we can write $u:=u(\bsy):=B^{-1}q$ and $u_{s}:=u_{s}(\bsy):=(B^{s})^{-1}q$
	for all $\bsy\in U$.

	Only in this proof, we redefine \eqref{eq:def-bj} by  $b_{j}:=\|\xi\|_{\infty}\|\psi_{j}\|_{L_{\infty}(D)}/a_{\min}$,
	with $\|\xi\|_{\infty}:=\|\xi\|_{L_\infty([0,1])}$.
	Notice that with $\xi=\frac1{\sqrt{6}}\sin(2\pi\cdot)$ we recover \eqref{eq:def-bj}.
	Let $s'\in\mathbb{Z}_{+}$
	be such that
	\begin{equation}
	\sum_{j=s'+1}^{\infty}b_{j}<\frac{1}{2}.\label{eq:btail}
	\end{equation}
	Without loss of generality, we can assume that $s\ge s'$ since the
	assertion in the theorem can subsequently be extended to all values
	of $s$ by making a simple adjustment of the constant $C>0$ (see
	the end of the proof). Then for all $j\ge s'+1$ and all $s\ge s'$
	we have
	\begin{align}
	& b_{j}<\frac{1}{2},\label{eq:bbound}\\
	& \sup_{\bsy\in U}\|(B^{s})^{-1}(B-B^{s})\|_{H_{0}^{1}(D)\to H_{0}^{1}(D)}\leq\sum_{j=s+1}^{\infty}b_{j}<\frac{1}{2}<1.\label{eq:opbound}
	\end{align}
	The bound~\eqref{eq:opbound} permits the use of a Neumann series
	expansion
	\begin{align}
	u&-u_{s}  =B^{-1}q-u_{s}=[I+(B^{s})^{-1}(B-B^{s})]^{-1}(B^{s})^{-1}q-u_{s}\nonumber \\
	& =\sum_{k=0}^{\infty}(-(B^{s})^{-1}(B-B^{s}))^{k}(B^{s})^{-1}q-u_{s}\nonumber \\
	& =\sum_{k=1}^{\infty}(-(B^{s})^{-1}(B-B^{s}))^{k}u_{s}=\sum_{k=1}^{\infty}(-1)^{k}\bigg(\sum_{i=s+1}^{\infty}\xi(y_{i})(B^{s})^{-1}B_{i}\bigg)^{k}u_{s},\label{eq:Neumann}
	\end{align}
	where it is assumed that the product symbol respects the non-commutative
	nature of the operators $(B^{s})^{-1}B_{j}$, $j\geq1$.

	{Our strategy is to}  estimate first
	\[
	S:=\int_{U}\int_{D}a_{s}(\bsx,\bsy)|\nabla(u-u_{s})|^{2}\,{\rm d}\bsx\,{\rm d}\bsy
	\]
	and {then deduce} by the Poincar\'{e} inequality $\|u\|_{L_{2}(D)}\leq c_{D}\|u\|_{H_{0}^{1}(D)}$,
	with $c_{D}>0$ depending only on the domain $D$, together with uniform coercivity,
	that
	\[
	\int_{U}\int_{D}(u-u_{s})^{2}\,{\rm d}\bsx\,{\rm d}\bsy\leq c_{D}^{2}\int_{U}\int_{D}|\nabla(u-u_{s})|^{2}\,{\rm d}\bsx\,{\rm d}\bsy\leq\frac{c_{D}^{2}}{a_{\min}}S.
	\]
	Let {$\mathscr{B}_s: H_0^1(D) \to H_0^1(D)$  be defined by}
	\[
	{\mathscr{B}_{s}}(\bsy):=\sum_{i=s+1}^{\infty}\xi(y_{i})(B^{s}(\bsy))^{-1}B_{i},
	\]
	and observe that ${\mathscr{B}_{s}}$ is self-adjoint with respect
	to the inner product
	\[
	\langle v,w\rangle_{\bsy_{\leq s}}
	\!
	:=
	\!\!\!\;
	\int_{D}
	\!\!\;\!\!\;\!
	a\bigl(\bsx,\!\!\;(\bsy_{\leq s},0,\dots)\bigr)\nabla v(\bsx)\cdot\nabla w(\bsx)\,\mathrm{d}\bsx
	=\langle B^s\!\!\;(\bsy)v,w\rangle_{H^{-1}(D),H^1_0(D)}
	.
	\]
	Indeed, {for any $v,w\in H_{0}^{1}(D)$ we have}
	\begin{align*}
	&\langle{\mathscr{B}_{s}}(\bsy)v,w\rangle_{\bsy_{\leq s}}  =
	\sum_{i=s+1}^{\infty}\xi(y_{i})\langle B^{s}(\bsy)(B^{s}(\bsy))^{-1}B_{i}v,w\rangle_{H^{-1},H_{0}^{1}}\\
	&
	=\sum_{i=s+1}^{\infty}\xi(y_{i})\!
	\int_{D}\psi_{i}\nabla v\cdot\nabla w\,\mathrm{d}\bsx
	=\!\!
	\sum_{i=s+1}^{\infty}\xi(y_{i})\langle B_{i}w,v\rangle_{H^{-1},H_{0}^{1}} =\langle{\mathscr{B}_{s}}(\bsy)w,v\rangle_{\bsy_{\leq s}}.
	\end{align*}
	Hence, from \eqref{eq:Neumann} we have
	\begin{align}
	\langle &u-u_{s},  u-u_{s}\rangle_{\bsy_{\leq s}}=\sum_{k=1}^{\infty}\sum_{\ell=1}^{\infty}(-1)^{k+\ell}\langle{\mathscr{B}_{s}^{k}}u_{s},{\mathscr{B}_{s}^{\ell}}u_{s}\rangle_{\bsy_{\leq s}}\nonumber\\
	& =\sum_{k=1}^{\infty}\sum_{\ell=1}^{\infty}(-1)^{k+\ell}\langle{\mathscr{B}_{s}^{k+\ell}}u_{s},u_{s}\rangle_{\bsy_{\leq s}} =\sum_{m=2}^{\infty}(-1)^{m}(m-1)\langle{\mathscr{B}_{s}^{m}}u_{s},u_{s}\rangle_{\bsy_{\leq s}}\nonumber\\
	& =\sum_{m=2}^{\infty}(-1)^{m}(m-1)\nonumber\\
	& \phantom{spaces}\times\!
	\sum_{\boldsymbol{\eta}\in\{s+1:\infty\}^{m}}\biggl(\prod_{j=1}^{m}\frac{\xi(y_{\eta_{j}})}{{\|\xi\|_{\infty}}}\biggr)\bigg\langle\prod_{j=1}^{m}{\|\xi\|_{\infty}}(B^{s}(\bsy)^{-1}B_{\eta_{j}})u_{s},u_{s}\bigg\rangle_{\bsy_{\leq s}},
	\label{eq:inneryles}
	\end{align}
	where we used the notation $\sum_{\boldsymbol{\eta}\in\{s+1:\infty\}^{m}}:=\lim_{\tilde{s}\to\infty}\sum_{\boldsymbol{\eta}\in\{s+1:\tilde{s}\}^{m}}$, and the latter product is assumed to respect the non-commutative
	nature of the operators. Introducing
	\[
	\bsnu(\boldsymbol{\eta}):=(\nu_{i}(\boldsymbol{\eta}))_{i\geq{1}}:=(\#\{j=1,\ldots,m:\eta_{j}=i\})_{i\geq{1}}
	\]
	for each $\boldsymbol{\eta}\in\{s+1,s+2,\dots\}^{m}$,
	we have $\nu_{i}(\boldsymbol{\eta})=0$, $i=1,\dots,s$,
	$|\bsnu(\boldsymbol{\eta})|:=\sum_{i=1}^{\infty}\nu_{i}(\boldsymbol{\eta})=m$,
	and
	\[
	\prod_{j=1}^{m}\frac{\xi(y_{\eta_{j}})}{{\|\xi\|_{\infty}}}=\prod_{i=s+1}^{\infty}{\biggl(\frac{\xi(y_{i})}{\|\xi\|_{\infty}}\biggr)^{\nu_{i}(\boldsymbol{\eta})}}.
	\]
	Define
	\[
	c_{\bsnu}:=\bigg|\int_{U_{>s}}\prod_{i\in{\rm supp}(\bsnu)}
	{\biggl(\frac{\xi(y_{i})}{\|\xi\|_{\infty}}\biggr)^{\nu_{i}}}\,{\rm d}\bsy_{>s}\bigg| \le 1,\]
	and note from \eqref{eq:property-xi} that $c_\bsnu = 0$ if  for some $i \in \supp(\bsnu)$ we have $\nu_i  = 1$.
	Then we have, using~\eqref{eq:inneryles},
	\begin{align*}
	&
	\int_{U}\int_{D}|\nabla(u-u_{s})|^{2}\,{\rm d}\bsx\,{\rm d}\bsy
	\leq\frac{
		{1}
	}{a_{\min}}\int_{U}\langle u-u_{s},u-u_{s}\rangle_{\bsy_{\leq s}}\,{\rm d}\bsy\\
	& =\frac{
		{1}
	}{a_{\min}}\sum_{m=2}^{\infty}(-1)^{m}(m-1)\!\!
	\sum_{\boldsymbol{\eta}\in\{s+1:\infty\}^{m}}\int_{U_{>s}}\biggl(\prod_{i=s+1}^{\infty}{\biggl(\frac{\xi(y_{i})}{\|\xi\|_{\infty}}\biggr)^{\nu_{i}(\boldsymbol{\eta})}}\biggr)\,{\rm d}\bsy_{>s}
	\\
	&\qquad\qquad\qquad\qquad\qquad\qquad\qquad\
	\times
	\int_{U_{\leq s}}\!\!\!\biggl\langle\prod_{j=1}^{m}{\|\xi\|_{\infty}}((B^{s})^{-1}B_{\eta_{j}})u_{s},u_{s}\biggr\rangle_{\!\!\!\bsy_{\leq s}}\!{\rm d}\bsy_{\leq s}\\
	& \leq
	\frac{
		{1}
	}{a_{\min}}\sum_{m=2}^{\infty}(m-1)\sum_{\boldsymbol{\eta}\in\{s+1:\infty\}^{m}}
	c_{\boldsymbol{\nu}(\boldsymbol{\eta})}\\
	&\qquad\qquad\qquad\qquad\qquad\qquad
	\times
	\bigg|\int_{U_{\leq s}}\biggl\langle\prod_{j=1}^{m}{\|\xi\|_{\infty}}((B^{s})^{-1}B_{\eta_{j}})u_{s},u_{s}\biggr\rangle_{\!\!\!\bsy_{\leq s}}\!{\rm d}\bsy_{\leq s}\bigg|,
	\end{align*}
	which can be further bounded by
	\begin{align*}
	&
	\leq\frac{
		{1}
	}{a_{\min}}
	\sum_{m=2}^{\infty}(m-1)
	\!\sum_{\boldsymbol{\eta}\in\{s+1:\infty\}^{m}}
	\!
	c_{\boldsymbol{\nu}(\boldsymbol{\eta})}
	a_{\max}\bigg\|\prod_{j=1}^{m}{\|\xi\|_{\infty}}((B^{s})^{-1}B_{\eta_{j}})u_{s}\bigg\|_{H_{0}^{1}}\|u_{s}\|_{H_{0}^{1}}\\
	& \leq\frac{
		{1}
	}{a_{\min}}\sum_{m=2}^{\infty}(m-1)\sum_{\boldsymbol{\eta}\in\{s+1:\infty\}^{m}}
	c_{\boldsymbol{\nu}(\boldsymbol{\eta})}
	a_{\max}\bigg(\prod_{j=1}^{m}b_{\eta_{j}}\bigg)\|u_{s}\|_{H_{0}^{1}}^{2}\\
	& \leq
	\bigg(\frac{\|q\|_{H^{-1}}}{a_{\min}}\bigg)^{2}\frac{a_{\max}}{a_{\min}}\sum_{m=2}^{\infty}(m-1)\sum_{\boldsymbol{\eta}\in\{s+1:\infty\}^{m}}
	c_{\boldsymbol{\nu}(\boldsymbol{\eta})}
	\bigg(\prod_{j=1}^{m}b_{\eta_{j}}\bigg),
	\end{align*}
	where the sum of $\bseta$ simplifies to
	\begin{equation*}
	\sum_{\substack{|\bsnu|=m\\
			\nu_{i}=0,~i\leq s
		}
	}\binom{m}{\bsnu}
	c_{\bsnu}
	\bigg(\prod_{i=s+1}^{\infty}b_{i}^{\nu_{i}}\bigg)
	\leq
	\sum_{\substack{|\bsnu|=m\\
			\nu_{i}=0,~i\leq s\\
			\nu_{i}\neq1,~i>s
	}}\binom{m}{\bsnu}\bigg(\prod_{i=s+1}^{\infty}b_{i}^{\nu_{i}}\bigg).
	\end{equation*}
	The dimension truncation error is estimated by splitting the upper
	bound into two parts. Let $m^{\ast}\geq3$ be an as yet undetermined index. Then
	\begin{align}
	\int_{U}\int_{D}|&\nabla(u-u_{s})|^{2}\,{\rm d}\bsx\,{\rm d}\bsy\nonumber\\
	\leq
	&
	\bigg(\frac{\|q\|_{H^{-1}}}{a_{\min}}\bigg)^{2}\frac{a_{\max}}{a_{\min}}\sum_{m=2}^{m^{\ast}-1}(m-1)\sum_{\substack{|\bsnu|=m\\
			\nu_{i}=0,~i<s\\
			\nu_{i}\neq1,~i>s
		}
	}\binom{m}{\bsnu}\bigg(\sum_{i=s+1}^{\infty}b_{i}^{\nu_{i}}\bigg)\nonumber \\
	& +
	\bigg(\frac{\|q\|_{H^{-1}}}{a_{\min}}\bigg)^{2}\frac{a_{\max}}{a_{\min}}\sum_{m=m^{\ast}}^{\infty}(m-1)\bigg(\sum_{i=s+1}^{\infty}b_{i}\bigg)^{m}.\label{eq:secondline}
	\end{align}
	We can estimate the {sum in the first term of~\eqref{eq:secondline}} by
	\begin{align*}
	\sum_{m=2}^{m^{\ast}-1}(m-1)\sum_{\substack{|\bsnu|=m\\
			\nu_{i}=0,~i<s\\
			\nu_{i}\neq1,~i>s
		}
	}\binom{m}{\bsnu}\bigg(\sum_{i=s+1}^{\infty}b_{i}^{\nu_{i}}\bigg)\leq(m^{\ast}-2)(m^{\ast}-1)!\sum_{\substack{0\neq|\bsnu|_{\infty}\leq m^{\ast}-1\\
			\nu_{i}=0,~i<s\\
			\nu_{i}\neq1,~i\geq1
		}
	}\bsb^{\bsnu},
	\end{align*}
	where $\bsb^{\bsnu}:=\prod_{i\in\mathrm{supp}(\bsnu)}b_{i}^{\nu_{i}}$.
	Furthermore, we obtain
	\begin{align}
	&\sum_{\substack{0\neq|\bsnu|_{\infty}\leq m^{\ast}-1\\
			\nu_{i}=0,~i<s\\
			\nu_{i}\neq1,~i\geq1
		}
	}\bsb^{\bsnu}  =\prod_{j=s+1}^{\infty}\bigg(1+\sum_{\ell=2}^{m^{\ast}-1}b_{j}^{\ell}\bigg)-1=\prod_{j=s+1}^{\infty}\bigg(1+b_{j}^{2}\frac{1-b_{j}^{m^{\ast}-2}}{1-b_{j}}\bigg)-1\nonumber \\
	& \leq\prod_{j=s+1}^{\infty}(1+2b_{j}^{2})-1\leq\exp\bigg(2\sum_{j=s+1}^{\infty}b_{j}^{2}\bigg)-1\leq2({\rm e}-1)\sum_{j=s+1}^{\infty}b_{j}^{2},\label{eq:term1}
	\end{align}
	where we used \eqref{eq:bbound}, \eqref{eq:opbound}, and the inequality
	${\rm e}^{x}\leq1+({\rm e}-1)x$ for all $x\in[0,1]$.

	Recalling~\eqref{eq:btail}, we find the following upper bound for
	the sum in the second term of~\eqref{eq:secondline}:
	\begin{align}
	\sum_{m=m^{\ast}}^{\infty}\!(m-1)
	\bigg(\sum_{j=s+1}^{\infty}b_{j}\bigg)^{m}\!\leq\!\sum_{m=m^{\ast}}^{\infty}
	\bigg(2\sum_{j=s+1}^{\infty}b_{j}\bigg)^{m}\!\leq\frac{(2\sum_{j=s+1}^{\infty}b_{j})^{m^{\ast}}}{1-(2\sum_{j=s+1}^{\infty}b_{j})},\label{eq:term2}
	\end{align}
	where we used the estimates $m-1\leq2^{m}$ and $2\sum_{j=s+1}^{\infty}b_{j}<1$.

	Observing that by~\cite[Theorem~5.1]{Kuo.F_Schwab_Sloan_2012_SINUM}
	it holds
	\[
	\sum_{j=s+1}^{\infty}b_{j}\leq\bigg(\sum_{j=1}^{\infty}b_{j}^{p}\bigg)^{1/p}s^{-\frac{1}{p}+1}
	\]
	and (with $b_j$ replaced by $b_j^2$ and $p$ replaced by
	$p/2$)
	\[
	\sum_{j=s+1}^{\infty}b_{j}^{2}\leq\bigg(\sum_{j=1}^{\infty}b_{j}^{p}\bigg)^{2/p}s^{-\frac{2}{p}+1},
	\]
	so
	we see that the terms~\eqref{eq:term1}--\eqref{eq:term2} can be
	balanced by choosing $m^{\ast}=\lceil\frac{2-p}{1-p}\rceil$. One
	arrives at the dimension truncation bound
	\begin{align*}
	&\sqrt{\int_{U}\int_{D}(u-u_{s})^{2}\,{\rm d}\bsx\,{\rm d}\bsy}\,\\
	&\ \ \leq
	{c_{D}\sqrt{\int_{U}\int_{D}|\nabla(u-u_{s})|^{2}\,{\rm d}\bsx\,{\rm d}\bsy}\leq}
	\,C\,\|q\|_{H^{-1}(D)}\,s^{-\frac{1}{p}+\frac{1}{2}}\quad\text{for all}~s\ge s',
	\end{align*}
	where the constant $C>0$ is independent of $s$ and $q$. This proves
	the theorem for $s\ge s'$. The result can be extended to all $s\ge1$
	by noting that
	\[
	\sqrt{\int_{U}\int_{D}|\nabla(u-u_{s})|^{2}\,{\rm d}\bsx\,{\rm d}\bsy}
	\,\leq\,\frac{2\,
		\,\|q\|_{H^{-1}(D)}}{a_{\min}}\,\leq\,
	\frac{2\,
		\,\|q\|_{H^{-1}(D)}}{a_{\min}\cdot(s')^{-1/p+1/2}}\,s^{-\frac{1}{p}+\frac{1}{2}}
	\]
	for all $1\leq s<s'$, where we used the \emph{a priori} bound identical to~\eqref{eq:apriori}.
\end{proof}
\begin{remark}
	Theorem~\ref{thm:dimensiontruncation} can be generalised further to include a more complex model
	\begin{equation*}
	\tilde{a}(\bsx,\bsy)\,=\,a_{0}(\bsx)+\sum_{j\geq1}\xi_j(y_{j})\,\psi_{j}(\bsx),\qquad\bsx\in D,~\bsy\in U,
	\end{equation*}
	where the function $\xi$ in \eqref{eq:a-tilde} is now replaced by an
	$L_\infty([0,1])$ function $\xi_j$ depending on $j$. Then, assuming that
	we have  $\int_{0}^1\xi_j(y)\,\rd y=0$, $j\geq1$, and that
	$\tilde{b}_{j}:=\|\xi_j\|_{\infty}\|\psi_{j}\|_{L_{\infty}(D)}/a_{\min}$
	is non-increasing in $j$, and moreover that $\tilde{a}$ satisfies
	$\textup{(A2)}$, the same argument as above establishes the same estimate
	as in Theorem~\ref{thm:dimensiontruncation}.
\end{remark}
\subsection{Finite element error}\label{sec:femerror}

Let assumption (A6) be in effect. Let $\{V_h\}_h$ be a family of
conforming finite element subspaces $V_h\subset H_0^1(D)$, parameterized
by the one-dimensional mesh size $h>0$, which are spanned by continuous,
piecewise linear finite element basis functions. It is assumed that the
triangulation corresponding to each $V_h$ is obtained from an initial,
regular triangulation of $D$ by recursive, uniform partition of simplices.

For each $\bsy\in U$, we denote by $u_{h}(\cdot,\bsy)\in V_h$ the finite element solution to the system
\begin{align}
\int_D a(\bsx,\bsy)\,\nabla u_{h}(\bsx,\bsy)\cdot \nabla v_h(\bsx)\,{\rm d}\bsx
=\langle q,v_h\rangle_{H^{-1}(D),H^1_0(D)},\quad\forall~v_h\in V_h,\label{eq:fem}
\end{align}
where $q\in H^{-1}(D)$ and $a$ is defined by~\eqref{eq:a-param}. Under
assumptions (A1)--(A2), this system is uniquely solvable and the finite
element solution $u_h$ satisfies both the {\em a priori}
bound~\eqref{eq:apriori} as well as the partial derivative
bounds~\eqref{eq:pde-der}. In analogy to the previous subsection, we also
define  the dimensionally truncated finite element solution by setting
\begin{equation}
u_{s,h}(\cdot,\bsy):=u_{s,h}(\cdot,(y_1,\ldots,y_s)):=u_h(\cdot,(y_1,\ldots,y_s,0,0,\ldots)),\quad \bsy\in U,\label{eq:dimtruncfem}
\end{equation}
where $u_h(\cdot,\bsy)\in V_h$ is the solution of~\eqref{eq:fem} for $\bsy\in U$.

\begin{theorem}
	Under the assumptions \textup{(A1)}, \textup{(A2)}, \textup{(A4)} and \textup{(A6)},
	for every $\bsy\in U$ and $q\in H^{-1+t}(D)$ with $t\in[0,1]$, there holds
	the asymptotic convergence estimate
	$$
	\|u(\cdot,\bsy)-u_h(\cdot,\bsy)\|_{L_2(D)} \,\leq\, C\, h^{1+t}\,\|q\|_{H^{-1+t}(D)}\qquad\text{as}~h\to 0,
	$$
	where the constant $C>0$ is independent of $h$ and $\bsy$.
\end{theorem}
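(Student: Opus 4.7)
The plan is to establish the estimate through the classical Aubin--Nitsche duality argument, combining Galerkin orthogonality with $H^2$-regularity on the convex polyhedral domain $D$, and then to cover all $t\in[0,1]$ by interpolating between the endpoints $t=0$ and $t=1$. The key observation is that under \textup{(A1)}, \textup{(A2)} and \textup{(A4)} the coefficient $a(\cdot,\bsy)$ belongs to $W^{1,\infty}(D)$ with a norm that is bounded \emph{uniformly in} $\bsy\in U$, since $\|a(\cdot,\bsy)\|_{W^{1,\infty}(D)} \le \|a_0\|_{W^{1,\infty}(D)} + \tfrac{1}{\sqrt{6}}\sum_{j\ge 1}\|\psi_j\|_{W^{1,\infty}(D)}$; all $\bsy$-dependence in constants will be controlled by this bound together with $a_{\min}$ and $a_{\max}$.

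First I would treat the endpoint $t=1$. Here $q\in L_2(D)$, and the standard elliptic regularity theory on convex polyhedra (see e.g.\ Grisvard) applied to the variable-coefficient equation $-\nabla\cdot(a(\cdot,\bsy)\nabla u(\cdot,\bsy)) = q$ with coefficient in $W^{1,\infty}(D)$ gives $u(\cdot,\bsy)\in H^2(D)\cap H^1_0(D)$ together with the bound $\|u(\cdot,\bsy)\|_{H^2(D)} \le C\,\|q\|_{L_2(D)}$, with $C$ depending on $a_{\min}$, $a_{\max}$ and the uniform $W^{1,\infty}$ bound on $a$, but not on $\bsy$. Céa's lemma followed by a standard Lagrange interpolation estimate on the quasi-uniform mesh yields $\|u(\cdot,\bsy)-u_h(\cdot,\bsy)\|_{H^1_0(D)} \le C\,h\,\|q\|_{L_2(D)}$. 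Now I apply Aubin--Nitsche: let $w\in H^1_0(D)$ solve the dual problem $\int_D a(\cdot,\bsy)\nabla w\cdot\nabla \phi = \int_D (u-u_h)\,\phi$ for all $\phi\in H^1_0(D)$. Elliptic regularity gives $\|w\|_{H^2(D)} \le C\,\|u-u_h\|_{L_2(D)}$. By Galerkin orthogonality applied with any $w_h\in V_h$,
\begin{equation*}
\|u-u_h\|_{L_2(D)}^2 = \int_D a(\cdot,\bsy)\nabla(u-u_h)\cdot \nabla(w-w_h)\,\rd\bsx \le a_{\max}\,\|u-u_h\|_{H^1_0(D)}\,\|w-w_h\|_{H^1_0(D)},
\end{equation*}
and choosing $w_h$ to be the nodal interpolant of $w$ gives $\|w-w_h\|_{H^1_0(D)} \le C\,h\,\|w\|_{H^2(D)}$. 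Combining these ingredients produces $\|u-u_h\|_{L_2(D)} \le C\,h^2\,\|q\|_{L_2(D)}$, which is the case $t=1$.

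Next the endpoint $t=0$: for $q\in H^{-1}(D)$ the same Aubin--Nitsche argument still applies, but we only control $\|u-u_h\|_{H^1_0(D)}$ via the \emph{a priori} bound~\eqref{eq:apriori}, namely $\|u-u_h\|_{H^1_0(D)} \le 2\,\|q\|_{H^{-1}(D)}/a_{\min}$, while the dual problem's right-hand side is still in $L_2(D)$ so that $\|w\|_{H^2(D)}\le C\,\|u-u_h\|_{L_2(D)}$ as before. This yields $\|u-u_h\|_{L_2(D)} \le C\,h\,\|q\|_{H^{-1}(D)}$.

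Finally, for $t\in(0,1)$ I view the linear operator $T_h : q\mapsto u(\cdot,\bsy)-u_h(\cdot,\bsy)$ from $H^{-1+t}(D)$ into $L_2(D)$. The two endpoint bounds above, $\|T_h\|_{H^{-1}\to L_2}\le Ch$ and $\|T_h\|_{L_2\to L_2}\le Ch^2$, combined with the interpolation identity $[H^{-1}(D),L_2(D)]_t = H^{-1+t}(D)$ (for $t\in[0,1]$, via either the complex or $K$-method of interpolation), give by standard operator interpolation
\begin{equation*}
\|T_h\|_{H^{-1+t}(D)\to L_2(D)} \le C\,h^{1-t}\cdot(h^2)^t \cdot C' = C\,h^{1+t},
\end{equation*}
with all constants independent of $\bsy$, completing the proof. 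The main obstacle is the clean identification of the $\bsy$-independence of the constants in the $H^2$-regularity estimate, which is precisely why assumptions \textup{(A4)} and \textup{(A6)} are imposed; once this is in place, the duality argument and interpolation are routine.
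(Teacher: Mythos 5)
Your proof is correct, and in substance it follows the same route as the paper: the paper's own proof is essentially a two-line argument that cites Theorem~7.2 of Kuo--Schwab--Sloan (SIAM J.\ Numer.\ Anal.\ 2012) for the duality estimate $|\langle g,u(\cdot,\bsy)-u_h(\cdot,\bsy)\rangle_{L_2(D)}|\le C\,h^{1+t}\,\|q\|_{H^{-1+t}(D)}\,\|g\|_{L_2(D)}$ and then takes the supremum over $\|g\|_{L_2(D)}=1$, whereas you reprove the substance of that cited result from scratch. Structurally you proceed a little differently: instead of bounding the functional $\langle g,u-u_h\rangle$ for general $g$, you establish the two endpoint bounds $\|T_h\|_{L_2(D)\to L_2(D)}\le Ch^2$ and $\|T_h\|_{H^{-1}(D)\to L_2(D)}\le Ch$ for the linear error operator $T_h:q\mapsto u-u_h$ and then apply operator interpolation; this is equivalent, and your identification of the $\bsy$-uniformity of the $H^2$-regularity constant via (A2), (A4) and (A6) is precisely the point the paper delegates to the reference. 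The only item worth flagging is the identification $[H^{-1}(D),L_2(D)]_t=H^{-1+t}(D)$, which carries the usual caveat at $t=\tfrac12$ (one obtains the dual of $H^{1/2}_{00}(D)$ rather than $H^{-1/2}(D)$ in some conventions); since the paper, following the cited reference, effectively defines $H^{-1+t}(D)$ by interpolation, this is a matter of convention rather than a gap. What your self-contained argument buys is independence from the external reference; what the paper's citation buys is brevity.
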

\begin{proof} Let $\bsy\in U$. From
	\cite[Theorem~7.2]{Kuo.F_Schwab_Sloan_2012_SINUM},  under the assumptions
	(A1), (A2), (A4), and (A6), we have for every $g\in L_2(D)$ the following
	asymptotic convergence estimate as $h\to 0$
	\begin{align}
	|\langle g,u(\cdot,\bsy)-u_h(\cdot,\bsy)\rangle_{L_2(D)}|
	\leq C\,h^{1+t}\,\|q\|_{H^{-1+t}(D)}\,\|g\|_{L^2(D)},\label{eq:preaubinnitsche}
	\end{align}
	where the constant $C>0$ is independent of $h$ and $\bsy$. Therefore
	\begin{align*}
	\|u(\cdot,\bsy)-u_h(\cdot,\bsy)\|_{L_2(D)}
	&= \sup_{g\in L_2(D),\,\|g\|_{L_2(D)}=1} \langle g,u(\cdot,\bsy)-u_h(\cdot,\bsy)\rangle_{L_2(D)} \\
	& \le C\,h^{1+t}\,\|q\|_{H^{-1+t}(D)},
	\end{align*}
	and this concludes the proof.
	\end{proof}

\subsection{Kernel interpolation error}\label{sec:kernelerror}

We focus on approximating the finite element solution of the
problem~\eqref{eq:pdeweak} in the following discussion, since it is
essential for our approximation scheme that the function being
approximated is pointwise well-defined in the physical domain $D$.

Let $H(U_s)=H$ denote the {RKHS} of functions with respect to the stochastic
parameter $\bsy\in U_s$, defined in Section~\ref{sec:space}. For every $\bsx\in D$, let
\[
u_{s,h,n}(\bsx,\cdot) \,:=\, A^\ast_n(u_{s,h}(\bsx,\cdot)) \,\in\, H(U_s)
\]
be the kernel interpolant of the dimensionally truncated finite element
solution~\eqref{eq:dimtruncfem} at $\bsx$ as a function of~$\bsy$. We
measure the $L_2$~approximation error
$\|u_{s,h}(\bsx,\cdot)-u_{s,h,n}(\bsx,\cdot)\|_{L_2(U_s)}$ in $\bsy$ and
then take the $L_2$ norm over $\bsx$, to arrive at the error criterion
\begin{align*}
&\sqrt{\int_D \|u_{s,h}(\bsx,\cdot) - u_{s,h,n}(\bsx,\cdot)\|_{L_2(U_s)}^2 \,\rd\bsx}\\
&\qquad\qquad
=\; \sqrt{\int_{U_s}\int_D  \big(u_{s,h}(\bsx,\bsy) - u_{s,h,n}(\bsx,\bsy)\big)^2 \,\rd\bsx\,\rd\bsy},
\end{align*}
where, observing that $u_{s,h} - u_{s,h,n}$ is jointly measurable, we interchanged the order of integration by appeal to the Fubini's theorem.

\begin{theorem} \label{thm:PDE-error}
	Under the assumptions \textup{(A1)}, \textup{ (A2)} and \textup{ (A6),} let
	$u_{s,h}(\cdot,\bsy)\in H_0^1(D)$ denote the dimensionally truncated
	finite element solution of~\eqref{eq:fem} for $\bsy\in U_s$ and let $q\in
	H^{-1}(D)$ be the corresponding source term. Moreover, for every $\bsx\in
	D$ let $u_{s,h,n}(\bsx,\cdot) := A^\ast_n(u_{s,h}(\bsx,\cdot))$ be the
	kernel interpolant at $\bsx$ based on a lattice rule satisfying the
	assumptions of Theorem~\ref{thm:wce}. Suppose that $\alpha\in 2
	{\mathbb{N}}$ and $\sigma := \frac{\alpha}{2}$. Then we have for all
	$\lambda\in(\frac{1}{\alpha},1]$ that
	\begin{align*}
	\sqrt{\int_{U_s}\int_D  \big(u_{s,h}(\bsx,\bsy) - u_{s,h,n}(\bsx,\bsy)\big)^2 \,\rd\bsx\,\rd\bsy}
	\,\le\,
	\frac{\kappa}{n^{1/(4\lambda)}}
	\frac{c_D\,\|q\|_{H^{-1}}}{a_{\min}}\, C_s(\lambda),
	\end{align*}
	where $c_D>0$ is the Poincar\'{e} constant of the embedding
	$H_0^1(D)\hookrightarrow L_2(D)$, $\kappa>0$ is the constant defined in
	Theorem~\ref{thm:wce}, and
	\begin{align} \label{eq:C-lambda}
	\begin{split}
	[C_s(\lambda)]^{2\lambda}:=
	&\Bigg(\sum_{\setu\subseteq\{1:s\}} \max(|\setu|,1) \gamma_\setu^\lambda\, [2\zeta(\alpha\lambda)]^{|\setu|}
	\Bigg)\\
	&\times
	\Bigg(\sum_{\setu\subseteq\{1:s\}} \frac{1}{\gamma_\setu}
	\bigg(\sum_{\bsm_\setu\in\{1:\sigma\}^{|\setu|}} |\bsm_\setu|!\prod_{j\in \setu} \big(b_j^{m_j}S(\sigma,m_j)\big)\bigg)^2
	\Bigg)^\lambda.
	\end{split}
	\end{align}
\end{theorem}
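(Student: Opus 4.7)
The plan is to apply the single-variable kernel interpolation error bound (Theorem~\ref{thm:wce2}) pointwise in $\bsx\in D$, then integrate the squared bound over $D$ and use the $H$-norm of $u_{s,h}(\bsx,\cdot)$ in terms of mixed partial derivatives (available because $\alpha$ is an even integer), and finally control those derivatives with the a~priori estimate \eqref{eq:pde-der}. More precisely, for each fixed $\bsx\in D$ Theorem~\ref{thm:wce2} gives
\[
\|u_{s,h}(\bsx,\cdot)-u_{s,h,n}(\bsx,\cdot)\|_{L_2(U_s)}
\,\le\, \frac{\kappa}{n^{1/(4\lambda)}}\,T_s(\lambda)\,\|u_{s,h}(\bsx,\cdot)\|_H,
\]
where $T_s(\lambda):=\bigl(\sum_{\setu\subseteq\{1:s\}} \max(|\setu|,1)\gamma_\setu^\lambda [2\zeta(\alpha\lambda)]^{|\setu|}\bigr)^{1/(2\lambda)}$ is the first factor in \eqref{eq:C-lambda}. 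Squaring and integrating over $\bsx\in D$, and interchanging $\bsx$ and $\bsy$ integrals by Fubini (legitimate since $u_{s,h}$ is jointly measurable and everything is nonnegative), reduces the theorem to bounding $\int_D \|u_{s,h}(\bsx,\cdot)\|_H^2\,\rd\bsx$.

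Because $\alpha=2\sigma$ is even, the representation \eqref{eq:Hnorm} applies and gives
\[
\int_D \|u_{s,h}(\bsx,\cdot)\|_H^2\,\rd\bsx
\,=\,\sum_{\setu\subseteq\{1:s\}}\frac{1}{(2\pi)^{\alpha|\setu|}\gamma_\setu}\int_D\int_{[0,1]^{|\setu|}}\!\!\bigg|\int_{[0,1]^{s-|\setu|}}\!\!\partial_{\bsy}^{\bsnu_\setu} u_{s,h}(\bsx,\bsy)\,\rd\bsy_{-\setu}\bigg|^2 \rd\bsy_\setu\,\rd\bsx,
\]
where $\bsnu_\setu$ is the multi-index equal to $\sigma$ on $\setu$ and $0$ elsewhere. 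Applying Jensen's inequality to the inner integral over $[0,1]^{s-|\setu|}$ and then Fubini once more replaces the squared-integral by $\int_{[0,1]^s}|\partial_{\bsy}^{\bsnu_\setu}u_{s,h}(\bsx,\bsy)|^2\rd\bsy$. I next integrate in $\bsx$: since $u_{s,h}(\cdot,\bsy)\in H^1_0(D)$ satisfies the Poincar\'e inequality, $\int_D |\partial_{\bsy}^{\bsnu_\setu} u_{s,h}|^2\rd\bsx \le c_D^2\,\|\partial_{\bsy}^{\bsnu_\setu}u_{s,h}(\cdot,\bsy)\|_{H_0^1(D)}^2$, and the finite element solution inherits the a~priori derivative bound \eqref{eq:pde-der}.

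Substituting \eqref{eq:pde-der} with $\bsnu=\bsnu_\setu$ (so $|\bsnu|=\sigma|\setu|$ and the sum over $\bsm\le\bsnu_\setu$ collapses to $\bsm_\setu\in\{1{:}\sigma\}^{|\setu|}$, using $S(\sigma,0)=0$ for $\sigma\ge 1$) yields a factor $(2\pi)^{2\sigma|\setu|}=(2\pi)^{\alpha|\setu|}$ that exactly cancels the prefactor $(2\pi)^{-\alpha|\setu|}$. Combining the pieces,
\[
\int_D \|u_{s,h}(\bsx,\cdot)\|_H^2\,\rd\bsx
\,\le\,\bigg(\frac{c_D\,\|q\|_{H^{-1}}}{a_{\min}}\bigg)^{\!2}
\sum_{\setu\subseteq\{1:s\}}\frac{1}{\gamma_\setu}\bigg(\sum_{\bsm_\setu\in\{1:\sigma\}^{|\setu|}}|\bsm_\setu|!\prod_{j\in\setu}\big(b_j^{m_j}S(\sigma,m_j)\big)\!\bigg)^{\!\!2},
\]
which is the second factor in \eqref{eq:C-lambda}. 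Plugging this back into the pointwise bound, taking square roots, and recognising $[C_s(\lambda)]^{2\lambda}$ as the product of the two factors gives the theorem. I expect the only delicate bookkeeping to be the cancellation of $(2\pi)$-powers and the restriction of the multi-index sum to $\{1{:}\sigma\}^{|\setu|}$; both follow cleanly from $\alpha=2\sigma$ and $S(\sigma,0)=\delta_{\sigma,0}$.
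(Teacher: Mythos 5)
Your proposal is correct and follows essentially the same route as the paper's proof: bound the $L_2(U_s)$ error pointwise in $\bsx$ by the worst-case error times $\|u_{s,h}(\bsx,\cdot)\|_H$, invoke Theorem~\ref{thm:wce} for the first factor, and estimate $\int_D\|u_{s,h}(\bsx,\cdot)\|_H^2\,\rd\bsx$ via \eqref{eq:Hnorm}, Cauchy--Schwarz/Jensen on the inner $\bsy_{-\setu}$ integral, Fubini, the Poincar\'e inequality, and the derivative bound \eqref{eq:pde-der} with $\bsnu=(\sigma,\ldots,\sigma)$ on $\setu$. The bookkeeping you flag (cancellation of the $(2\pi)^{\alpha|\setu|}$ factors and the collapse of the sum to $\bsm_\setu\in\{1{:}\sigma\}^{|\setu|}$ via $S(\sigma,0)=0$) matches the paper's computation exactly.
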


\begin{proof}
	We can express the squared $L_2$ error as
	\begin{align*}
	\int_{U_s}\int_D \big(u_{s,h}(\bsx,\bsy)& - u_{s,h,n}(\bsx,\bsy)\big)^2 \,\rd\bsx\,\rd\bsy\\
	&\,=\, \int_D \|u_{s,h}(\bsx,\cdot) - u_{s,h,n}(\bsx,\cdot)\|_{L_2(U_s)}^2 \,\rd\bsx \\
	&\,\le\, \int_D \Big( e^{\rm wor}(A^*_n;L_2(U_s)) \, \|u_{s,h}(\bsx,\cdot)\|_{H(U_s)} \Big)^2 \,\rd\bsx \\
	&\,=\, [ e^{\rm wor}(A^*_n;L_2(U_s)) ]^2\, \int_D \|u_{s,h}(\bsx,\cdot)\|_{H(U_s)}^2 \,\rd\bsx.
	\end{align*}
	The first factor is the squared worst case $L_2$ approximation error, which
	can be bounded using Theorem~\ref{thm:wce}. The second factor can be
	estimated using \eqref{eq:Hnorm} by
	\begin{align*}
	&\int_D \|u_{s,h}(\bsx,\cdot)\|_{H(U_s)}^2 \,\rd\bsx \\
	&=\int_D\! \sum_{\setu\subseteq\{1:s\}}\!
	\frac{1}{(2\pi)^{\alpha|\setu|}\,\gamma_\setu}
	\int_{[0,1]^{|\setu|}}
	\biggl(\int_{[0,1]^{s-|\setu|}}\! \biggl(\prod_{j\in\setu}
	\frac{\partial^{\sigma}}{\partial y_j^{\sigma}}\biggr)
	u_{s,h}(\bsx,\bsy)
	\rd\bsy_{-\setu}\biggr)^2\,\rd\bsy_\setu\rd\bsx \\
	&\le\int_D\! \sum_{\setu\subseteq\{1:s\}}\!
	\frac{1}{(2\pi)^{\alpha|\setu|}\,\gamma_\setu}\!\!\;
	\int_{[0,1]^{|\setu|}}
	\int_{[0,1]^{s-|\setu|}}
	\biggl[\biggl(\prod_{j\in\setu} \frac{\partial^{\sigma}}{\partial y_j^{\sigma}}\biggr) u_{s,h}(\bsx,\bsy)\biggr]^2
	\,\rd\bsy_{-\setu}\,\rd\bsy_\setu\,\rd\bsx \\
	&\,=\, \sum_{\setu\subseteq\{1:s\}}\frac{1}{(2\pi)^{\alpha|\setu|}\,\gamma_\setu}
	\int_{[0,1]^s}
	\bigg\|\bigg(\prod_{j\in\setu} \frac{\partial^{\sigma}}{\partial y_j^{\sigma}}\bigg) u_{s,h}(\cdot,\bsy)\bigg\|^2_{L_2(D)}
	\,\rd\bsy \\
	&\,\le\, c_D^2\,\sum_{\setu\subseteq\{1:s\}}\frac{1}{(2\pi)^{\alpha|\setu|}\,\gamma_\setu}
	\int_{[0,1]^s}
	\bigg\|\bigg(\prod_{j\in\setu} \frac{\partial^{\sigma}}{\partial y_j^{\sigma}}\bigg) u_{s,h}(\cdot,\bsy)\bigg\|^2_{H^1_0(D)}
	\,\rd\bsy \\
	&\,\le\, c_D^2\,\frac{\|q\|_{H^{-1}(D)}^2}{a_{\min}^2}
	\sum_{\setu\subseteq\{1:s\}}
	\frac{1}{\gamma_\setu}
	\bigg(\sum_{\bsm_\setu\in\{1:\sigma\}^{|\setu|}} |\bsm_\setu|!\prod_{j\in \setu} \big(b_j^{m_j}S(\sigma,m_j)\big)\bigg)^2,
	\end{align*}
	where we used the Cauchy--Schwarz inequality, Fubini's theorem, the
	Poincar\'{e} constant $c_D>0$ for the embedding $H_0^1(D)\hookrightarrow
	L_2(D)$, together with the PDE derivative bound \eqref{eq:pde-der} applied
	with $\bsnu = (\sigma,\ldots,\sigma) = (\frac{\alpha}{2},
	\ldots,\frac{\alpha}{2})$. The theorem is proved by combining the above
	expressions with Theorem~\ref{thm:wce}.
	\end{proof}

Next, we proceed to choose the weights $\gamma_\setu$ and the parameters
$\lambda$ and $\alpha$ to ensure that the constant $C_s(\lambda)$ can be
bounded independently of $s$, with $\lambda$ as small as possible to yield
the best possible convergence rate.

\subsubsection{Choosing SPOD weights}

One way to choose the weights is to equate the terms inside the two sums
over $\setu$ in the formula \eqref{eq:C-lambda} for $C_s(\lambda)$.
(The value of $C_s(\lambda)$ so obtained minimizes \eqref{eq:C-lambda}
with respect to $\gamma_\setu$ for $\setu\subseteq \{1:s\}$.) It will be
shown that this yields the convergence rate
$\calO(n^{-(\frac{1}{2p}-\frac{1}{4})})$ with an implied constant
independent of the dimension $s$. The rate is precisely the rate of
convergence that we expect to get. However, this choice of weights is too
complicated to allow for efficient CBC construction of the lattice
generating vector. So in the theorem below we propose a choice of SPOD
weights that achieves the same error bound.

\begin{theorem}\label{thm:spod}
	Assume that \textnormal{(A1)--(A3)} and \textnormal{(A6)} hold, and that
	$p$ is as in \textnormal{(A3)}. Take $\alpha := 2\lfloor
	\frac{1}{p}+\frac{1}{2}\rfloor$, $\sigma := \frac{\alpha}{{2}}$,
	$\lambda:= \frac{p}{2-p}$, and define the weights to be
	\begin{equation}
	\gamma_\setu\! :=\! \Biggl(\!
	\frac{1}{\max(|\setu|,1)\,[2\zeta(\alpha\lambda)]^{|\setu|}}
	\biggl(\sum_{\bsm_\setu\in\{1:\sigma\}^{|\setu|}}\!\!\!\! |\bsm_\setu|!\prod_{j\in \setu} \bigl(b_j^{m_j}S(\sigma,m_j)\bigr)
	\biggr)^{\!\!2}\Biggr)^{\frac{1}{1+\lambda}}\!\!
	\label{eq:bad-weights}
	\end{equation}
	for $\emptyset\neq\setu\subset\bbN,\;
	|\setu|<\infty$,
	or SPOD weights
	\begin{equation} \label{eq:spod-weights}
	\gamma_\setu \,:=\,
	\sum_{\bsm_\setu\in\{1:\sigma\}^{|\setu|}} (|\bsm_\setu|!)^{\frac{2}{1+\lambda}}
	\prod_{j\in \setu} \bigg(\frac{b_j^{m_j}S(\sigma,m_j)}{\sqrt{2\mathrm{e}^{1/\mathrm{e}}\zeta(\alpha\lambda)}}
	\bigg)^{\frac{2}{1+\lambda}}
	\end{equation}
	for $\emptyset\neq\setu\subset\bbN,\;
	|\setu|<\infty$, with $\gamma_{\emptyset}:= 1$. Then the kernel interpolant of the finite
	element solution in Theorem~\ref{thm:PDE-error} satisfies
	\begin{align*}
	\sqrt{\int_{U_s}\int_D \big(u_{s,h}(\bsx,\bsy) - u_{s,h,n}(\bsx,\bsy)\big)^2 \,\rd\bsx\,\rd\bsy}
	\,\le\,
	C\,\|q\|_{H^{-1}(D)}\,n^{-(\frac{1}{2p} - \frac{1}{4})},
	\end{align*}
	where the constant $C>0$ is independent of the dimension $s$.
\end{theorem}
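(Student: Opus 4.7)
The plan is to reduce everything, via Theorem~\ref{thm:PDE-error}, to showing that $C_s(\lambda)$ defined in \eqref{eq:C-lambda} is bounded uniformly in $s$ for each of the two weight families; combined with the identity $1/(4\lambda) = 1/(2p) - 1/4$ (valid for $\lambda = p/(2-p)$), this gives the claimed rate. Writing $B_\setu := \sum_{\bsm_\setu\in\{1:\sigma\}^{|\setu|}}|\bsm_\setu|!\prod_{j\in\setu}b_j^{m_j}S(\sigma,m_j)$ for the inner expression in \eqref{eq:C-lambda}, a direct substitution using $2\lambda/(1+\lambda)=p$ and $1/(1+\lambda)=(2-p)/2$ shows that, for the weights \eqref{eq:bad-weights}, \emph{both} sums appearing in $[C_s(\lambda)]^{2\lambda}$ coincide with
\[
T \,:=\, \sum_{\setu\subseteq\{1:s\}} B_\setu^{p}\,\bigl(\max(|\setu|,1)[2\zeta(\alpha\lambda)]^{|\setu|}\bigr)^{(2-p)/2},
\]
yielding $C_s(\lambda) = T^{(1+\lambda)/(2\lambda)}$; the task therefore reduces to bounding $T$ uniformly in $s$.

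For the SPOD weights \eqref{eq:spod-weights}, I would use the elementary inequality $(\sum_i a_i)^\lambda \le \sum_i a_i^\lambda$, valid for $\lambda\in(0,1]$, to bring $\gamma_\setu^\lambda$ inside the $\bsm_\setu$-sum. The constant $\mathrm{e}^{1/\mathrm{e}}$ in \eqref{eq:spod-weights} is calibrated precisely so that, when combined with the bound $\max(|\setu|,1)\le\mathrm{e}^{|\setu|/\mathrm{e}}$ and the factor $[2\zeta(\alpha\lambda)]^{|\setu|}$ in \eqref{eq:C-lambda}, the resulting prefactor assembles as $(2\mathrm{e}^{1/\mathrm{e}}\zeta(\alpha\lambda))^{|\setu|(2-p)/2}$, giving an expression of exactly the same shape as $T$. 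A dual H\"older argument (essentially the embedding $\ell^{2/(1+\lambda)}\hookrightarrow\ell^{1}$ on the finite index set $\{1:\sigma\}^{|\setu|}$) provides a matching upper bound $\sum_\setu B_\setu^2/\gamma_\setu \le \text{const.}\cdot T$, so both sums in $[C_s(\lambda)]^{2\lambda}$ under \eqref{eq:spod-weights} are dominated by $s$-independent multiples of $T$.

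Finally, to show $T<\infty$ uniformly in $s$, I would apply the subadditivity $(\sum_i x_i)^p\le\sum_i x_i^p$ (valid since $0<p<1$) to $B_\setu^p$, reducing $T$ to a double sum over $\setu$ and $\bsm_\setu$ with integrand $(|\bsm_\setu|!)^p\prod_{j\in\setu}(b_j^{m_j}S(\sigma,m_j))^p$ (and a geometric factor $C^{|\setu|}$). The main obstacle is that the factorial $(|\bsm_\setu|!)^p$ couples the indices $m_j$, preventing direct factorisation over $j\in\setu$. I would overcome this via the multinomial identity $|\bsm_\setu|! = \binom{|\bsm_\setu|}{\bsm_\setu}\prod_{j\in\setu}m_j!$ together with the uniform bound $\binom{|\bsm_\setu|}{\bsm_\setu} \le |\setu|^{|\bsm_\setu|}\le \sigma^{\sigma|\setu|}$, which restores a product structure over $j$; the sum over $\setu$ then factorises as $\prod_{j\ge 1}\bigl(1 + C'\sum_{m=1}^{\sigma} b_j^{pm}S(\sigma,m)^p\bigr) - 1$, converging by assumption~(A3) ($\sum_j b_j^p<\infty$). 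The choice $\alpha = 2\lfloor 1/p+1/2\rfloor$ ensures $\alpha\lambda>1$ so $\zeta(\alpha\lambda)<\infty$; plugging $C_s(\lambda)\le C_\infty$ back into Theorem~\ref{thm:PDE-error} concludes the proof. This SPOD decoupling mirrors the integration analysis in \cite{KKS}, with the halving of the rate from $1/p - 1/2$ there to $1/(2p) - 1/4$ here reflected in the use of $\sigma = \alpha/2$ in place of $\sigma = \alpha$.
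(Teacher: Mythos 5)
Your reduction to bounding $C_s(\lambda)$ uniformly in $s$, the observation that the weights \eqref{eq:bad-weights} equate the two sums in \eqref{eq:C-lambda}, and the use of Jensen/H\"older-type inequalities to bring the SPOD weights \eqref{eq:spod-weights} to the same form all match the paper's proof in substance (the paper uses Cauchy--Schwarz on $V=V^{1/(1+\lambda)}V^{\lambda/(1+\lambda)}$ rather than a finite-dimensional $\ell^q$-embedding; note in passing that $\ell^{2/(1+\lambda)}\hookrightarrow\ell^{1}$ is stated backwards since $2/(1+\lambda)\ge 1$, though on the finite index set it can be repaired at the cost of a factor $\sigma^{|\setu|(1-\lambda)/2}$, which is harmless). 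The genuine gap is in your final step, the treatment of the coupling factorial $(|\bsm_\setu|!)^{p}$. The inequality chain $\binom{|\bsm_\setu|}{\bsm_\setu}\le|\setu|^{|\bsm_\setu|}\le\sigma^{\sigma|\setu|}$ is false in its second step: since $|\bsm_\setu|$ can be as large as $\sigma|\setu|$, the middle quantity can be $|\setu|^{\sigma|\setu|}$, which is super-geometric in $|\setu|$, not $\sigma^{\sigma|\setu|}$. No bound of the form $c^{|\setu|}$ on the multinomial coefficient is possible: already for $\bsm_\setu=(1,\dots,1)$ one has $\binom{|\bsm_\setu|}{\bsm_\setu}=|\setu|!$. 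Consequently the claimed factorisation $\prod_{j\ge1}\bigl(1+C'\sum_{m}b_j^{pm}S(\sigma,m)^p\bigr)-1$ with an $s$-independent $C'$ does not follow, and the route collapses.

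If one keeps the correct bound $\binom{|\bsm_\setu|}{\bsm_\setu}\le|\setu|^{|\bsm_\setu|}$ and absorbs the resulting factor $|\setu|^{pm_j}$ into each $j$-factor, the sum over $|\setu|=\ell$ contributes at best a gain $1/\ell!$ against a loss of order $\ell^{p\sigma\ell}$, so convergence would require $p\sigma<1$. With $\sigma=\lfloor\frac1p+\frac12\rfloor$ this fails whenever the fractional part of $1/p$ is at least $\tfrac12$ (e.g.\ $p=0.625$ gives $\sigma=2$, $p\sigma=1.25$), so the patched argument still does not cover the full range of $p$ in (A3). The paper avoids this entirely with a re-indexing device: setting $d_j:=\beta_{\lceil j/\sigma\rceil}$, each $\bsm\in\{0:\sigma\}^{s}$ is mapped injectively to a finite set $\setv\subset\bbN$ with $|\setv|=|\bsm|$ and $\prod_j\beta_j^{m_j}=\prod_{j\in\setv}d_j$, whence
\begin{equation*}
\sum_{\bsm}\Bigl(|\bsm|!\prod_j\beta_j^{m_j}\Bigr)^{p}\le\sum_{\ell\ge0}(\ell!)^{p}\sum_{|\setv|=\ell}\prod_{j\in\setv}d_j^{p}\le\sum_{\ell\ge0}(\ell!)^{p-1}\Bigl(\sum_{j\ge1}d_j^{p}\Bigr)^{\ell},
\end{equation*}
where the crucial point is that the $1/\ell!$ overcounting gain is taken with respect to $\ell=|\bsm|$, exactly matching the factorial $(|\bsm|!)^{p}$; the series then converges for every $p<1$ by the ratio test. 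You should replace your multinomial decoupling by this (or an equivalent) device.
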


\begin{proof}
	We will proceed to justify the two choices of weights
	\eqref{eq:bad-weights} and \eqref{eq:spod-weights}, and show that in both
	cases the term $C_s(\lambda)$ appearing in Theorem~\ref{thm:PDE-error} can
	be bounded independently of~$s$, by specifying $\lambda$ and $\alpha$ as
	in the theorem.

	The first choice of weights \eqref{eq:bad-weights} is obtained by equating
	the terms inside the two sums over~$\setu$ in the formula
	\eqref{eq:C-lambda}. Substituting \eqref{eq:bad-weights} into
	\eqref{eq:C-lambda} yields
	\begin{align}
	&[C_s(\lambda)]^{\frac{2\lambda}{1+\lambda}}\nonumber\\
	&\!=\!\!
	\sum_{\setu\subseteq\{1:s\}}\!\!\!\;
	\biggl(\max(|\setu|,1)\, [2\zeta(\alpha\lambda)]^{|\setu|}\!\!\; \biggr)^{\frac{1}{1+\lambda}}\!
	\biggl(\sum_{\bsm_\setu\in\{1:\sigma\}^{|\setu|}}\! \!
	|\bsm_\setu|!
	\prod_{j\in \setu} \big(b_j^{m_j}S(\sigma,m_j)\big)\biggr)^{\frac{2\lambda}{1+\lambda}} \nonumber\\
	&\!\le \sum_{\setu\subseteq\{1:s\}}
	\bigg( \sum_{\bsm_\setu\in\{1:\sigma\}^{|\setu|}} |\bsm_\setu|!
	\prod_{j\in \setu} \Big(b_j^{m_j}S(\sigma,m_j)
	\big[2\mathrm{e}^{1/\mathrm{e}}\zeta(\alpha\lambda)\big]^{\frac{1}{2\lambda}}\Big)\bigg)^{\frac{2\lambda}{1+\lambda}} \nonumber\\
	&\!\le \sum_{\setu\subseteq\{1:s\}}
	\sum_{\bsm_\setu\in\{1:\sigma\}^{|\setu|}}
	\biggl(|\bsm_\setu|!
	\prod_{j\in \setu} \beta_j^{m_j}\biggr)^{\!\frac{2\lambda}{1+\lambda}}\!
	=
	\! \sum_{\bsm\in\{0:\sigma\}^{{s}}}
	\biggl(|\bsm|!
	\prod_{j=1}^s \beta_j^{m_j}\biggr)^{\frac{2\lambda}{1+\lambda}}, \label{eq:bound}
	\end{align}
	where we used $\max(|\setu|,1)\le [\mathrm{e}^{1/\mathrm{e}}]^{|\setu|} =
	(1.4446\cdots)^{|\setu|}$, and defined
	$S_{\max}(\sigma) := \max_{1\le m\le\sigma}
	S(\sigma,m)$, and $
	\beta_j \,:=\,
S_{\max}(\sigma)\,[2{\rm e}^{1/{\rm e}}\zeta(\alpha\lambda)]^{\frac{1}{2\lambda}}
	b_j$ 
	for all ${j\geq 1}$,
	while applying Jensen's inequality\footnote{Jensen's inequality
		states that $\sum_i a_i \le (\sum_i a_i^t)^{1/t}$ for all $a_i\ge 0$ and
		$t\in (0,1]$.}
	with $0<\frac{2\lambda}{1+\lambda}\le 1$. 	

	The second choice of weights \eqref{eq:spod-weights} is inspired by the
	weights \eqref{eq:bad-weights} but takes the SPOD form
	\begin{equation} \label{eq:guess}
	\gamma_\setu := \frac{1}{\tau^{|\setu|}}\!\!
	\sum_{\bsm_\setu\in\{1:\sigma\}^{|\setu|}} \!\!\! [V(\bsm_\setu)]^{\frac{2}{1+\lambda}},
	\quad\
	V(\bsm_\setu) := |\bsm_\setu|! \prod_{j\in \setu} \big(b_j^{m_j}S(\sigma,m_j)\big),
	\end{equation}
	with $\tau>0$ to be specified below. (The $\tau^{|\setu|}$ factor can be
	merged into the product over~$\setu$, thus giving SPOD weights.)
	Estimating $\max(|\setu|,1)\le [\mathrm{e}^{1/\mathrm{e}}]^{|\setu|}$ in
	\eqref{eq:C-lambda}, plugging in the weights \eqref{eq:guess}, applying
	the Cauchy--Schwarz inequality with $\frac{1}{1+\lambda} +
	\frac{\lambda}{1+\lambda}=1$, and applying Jensen's inequality
	with
	$0<\lambda\le 1$, we obtain from \eqref{eq:C-lambda}
	\begin{align*}
	&[C_s(\lambda)]^{2\lambda}\\
	&
	\le \Biggl(\sum_{\setu\subseteq\{1:s\}} \gamma_\setu^\lambda\, [2\mathrm{e}^{1/\mathrm{e}}\zeta(\alpha\lambda)]^{|\setu|}
	\Biggr)
	\Biggl(\sum_{\setu\subseteq\{1:s\}} \frac{1}{\gamma_\setu}
	\biggl(\sum_{\bsm_\setu\in\{1:\sigma\}^{|\setu|}}\!\!\! [V(\bsm_\setu)]^{\frac{1}{1+\lambda} + \frac{\lambda}{1+\lambda}} \biggr)^2
	\Biggr)^\lambda \notag\\
	&\le
	\Biggl(\sum_{\setu\subseteq\{1:s\}} \biggl(\frac{2\mathrm{e}^{1/\mathrm{e}}\zeta(\alpha\lambda)}{\tau^\lambda}\biggr)^{|\setu|}
	\biggl(\sum_{\bsm_\setu\in\{1:\sigma\}^{|\setu|}} [V(\bsm_\setu)]^{\frac{2}{1+\lambda}}\biggr)^\lambda
	\Biggr) \notag\\
	&\qquad
	\times
	\Biggl(\sum_{\setu\subseteq\{1:s\}} \frac{\tau^{|\setu|}}{\sum_{\bsm_\setu\in\{1:\sigma\}^{|\setu|}}
		[V(\bsm_\setu)]^{\frac{2}{1+\lambda}}}
	\biggl(\sum_{\bsm_\setu\in\{1:\sigma\}^{|\setu|}} [V(\bsm_\setu)]^{\frac{2}{1+\lambda}} \biggr)\\
	&\qquad\qquad\qquad\qquad\qquad\qquad\qquad\qquad\qquad
	\times\biggl(\sum_{\bsm_\setu\in\{1:\sigma\}^{|\setu|}} [V(\bsm_\setu)]^{\frac{2\lambda}{1+\lambda}} \biggr)
	\Biggr)^\lambda,
	\end{align*}
	and further
	\begin{align*}
	[C_s(\lambda)]^{2\lambda}&\le
	\Bigg(\sum_{\setu\subseteq\{1:s\}} \bigg(\frac{2\mathrm{e}^{1/\mathrm{e}}\zeta(\alpha\lambda)}{\tau^\lambda}\bigg)^{|\setu|}\!\!
	\sum_{\bsm_\setu\in\{1:\sigma\}^{|\setu|}}\!\!\! [V(\bsm_\setu)]^{\frac{2\lambda}{1+\lambda}}
	\Bigg)\\
	&\qquad\qquad\times
	\Bigg(\sum_{\setu\subseteq\{1:s\}} \tau^{|\setu|}\!\!
	\sum_{\bsm_\setu\in\{1:\sigma\}^{|\setu|}}\!\!\! [V(\bsm_\setu)]^{\frac{2\lambda}{1+\lambda}}
	\Bigg)^\lambda \notag\\
	&= \Bigg(\sum_{\setu\subseteq\{1:s\}} \tau^{|\setu|}
	\sum_{\bsm_\setu\in\{1:\sigma\}^{|\setu|}}\!\!\! [V(\bsm_\setu)]^{\frac{2\lambda}{1+\lambda}}
	\Bigg)^{1+\lambda},
	\end{align*}
	where equality holds provided that we now choose $\tau :=
	[2\mathrm{e}^{1/\mathrm{e}}\zeta(\alpha\lambda)]^{\frac{1}{1+\lambda}}$.
	This leads to the same upper bound~\eqref{eq:bound} as for the first
	choice of weights.

	It remains to show that the upper bound \eqref{eq:bound} can be bounded
	independently of $s$.
	We define the sequence $d_j :=\beta_{\lceil
		j/\sigma\rceil}$ for $j\geq 1$, so that $d_1=\cdots=d_{\sigma}=\beta_1$,
	$d_{\sigma+1}=\cdots=d_{2\sigma}=\beta_2$, and so on.
	Then for $\boldsymbol m\in\{0:\sigma\}^s$ we can write
	\[
	\prod_{j=1}^s\beta_j^{m_j}=\underset{m_1~\text{factors}}{\underbrace{\beta_1\cdots \beta_1}}\cdots \underset{m_s~\text{factors}}{\underbrace{\beta_s\cdots\beta_s}}=\prod_{j\in\setv}d_j,
	\]
	where $\setv:=\{1,2,\ldots,m_1,\sigma+1,\sigma+2,\ldots,\sigma+m_2,\ldots,(s-1)\sigma+1,\ldots,(s-1)\sigma+m_s\}$.
	Clearly, the set $\setv$ is of cardinality $|\setv|=m_1+\cdots+m_s=|\boldsymbol m|$.
	It follows that
	\begin{align} \label{eq:sum}
	\sum_{\bsm\in\{0:\sigma\}^{s}}
	&\biggl(|\bsm|! \prod_{j=1}^s \beta_j^{m_j}\biggr)^{\frac{2\lambda}{1+\lambda}}
	\le  \sum_{\substack{\setv\subset\mathbb{N}\\ |\setv|<\infty}}  \bigg(|\setv|!
	\prod_{j\in\setv}d_j\bigg)^{\frac{2\lambda}{1+\lambda}}\notag\\
	&=\, \sum_{\ell\geq 0} (\ell !)^{\frac{2\lambda}{1+\lambda}}
	\sum_{\substack{\setv\subset\mathbb{N}\\ |\setv|=\ell}}\prod_{j\in\setv}d_j^{\frac{2\lambda}{1+\lambda}}
	\,\le\, \sum_{\ell\geq 0} (\ell !)^{\frac{2\lambda}{1+\lambda}} \frac{1}{\ell !}
	\bigg(\underbrace{\sum_{j\geq 1}d_j^{\frac{2\lambda}{1+\lambda}}}_{=:\,T}\bigg)^{\ell}.
	\end{align}
	The final inequality holds because $(\sum_{j\geq
		1}d_j^{\frac{2\lambda}{1+\lambda}} )^\ell$ includes all the products of
	the form $\prod_{j\in\mathfrak{v}}d_j^{\frac{2\lambda}{1+\lambda}} $ with
	$|\mathfrak{v}|=\ell$, and moreover includes each such term $\ell !$
	times.

	Recall from \eqref{eq:def-bj} and the assumption (A3) that $\sum_{j\ge 1}
	b_j^p < \infty$. We now choose
	\[
	\frac{2\lambda}{1+\lambda} =p \quad\iff\quad \frac{1}{2\lambda} = \frac{1}{p} - \frac{1}{2}
	\quad\iff\quad \lambda = \frac{p}{2-p}.
	\]
	For the inner sum in \eqref{eq:sum} we now have
	\begin{align*}
	T \,=\, \sum_{j\geq 1} d_j^p
	\,=\, \sigma \sum_{j\geq 1} \beta_j^p
	\,=\, \sigma \max\Big(1,S_{\max}(\sigma)\,[2\zeta(\alpha\lambda)]^{\frac{1}{2\lambda}}\Big)^p \sum_{j\geq 1} b_j^p
	\,<\, \infty,
	\end{align*}
	provided that $\alpha\lambda > 1$, which is equivalent to $\alpha >
	\frac{2}{p}-1$. This latter condition as well as the requirement that
	$\alpha$ be even can be satisfied by taking $\alpha$ such that
	$\frac{\alpha}{2} = \lfloor (\frac{1}{p} - \frac{1}{2}) +1\rfloor$, so we
	take $\alpha := {2\lfloor \frac{1}{p}+\frac{1}{2}\rfloor}$. Finally, the
	ratio test implies convergence of the outer sum in \eqref{eq:sum}, and
	consequently $C_s(\lambda)$ is bounded independently of $s$. Theorem
	\ref{thm:PDE-error} now ensures an error bound independent of $s$, and the
	convergence rate is $\calO(n^{-(\frac{1}{2p}-\frac{1}{4})})$. This
	completes the proof.
	\end{proof}

\subsubsection{Choosing POD weights}

In the next theorem we prove that if the assumption (A3) holds for some
$p\in \bigcup_{k=1}^\infty \big(\frac{2}{2k+1},\frac{1}{k}\big)$, then it
is possible to use POD weights to obtain the same rate of convergence as
in Theorem~\ref{thm:spod}. For this and the next subsections we need the
sequence of \emph{Bell polynomials} (more precisely, \emph{Touchard polynomials}), which we denote by
\[
{\rm Bell}_{\sigma}(x) \,:=\, \sum_{m=0}^{\sigma} S(\sigma,m)\,x^m,\quad \sigma\in\mathbb{N}_{0},
\]
where $S(\sigma,m)$ denotes the Stirling number of the second kind as
before.

\begin{theorem} \label{thm:pod}
	Assume that \textnormal{(A1)--(A3)}, \textnormal{(A5)} and
	\textnormal{(A6)} hold, and further assume that $p\in \bigcup_{k=1}^\infty
	\big(\frac{2}{2k+1},\frac{1}{k}\big)$ in \textnormal{(A3)}. We take
	$\alpha := 2\lfloor \frac{1}{p}\rfloor$, $\sigma := \frac{\alpha}{2}$,
	$\lambda:= \frac{p}{2-p}$, and define POD weights
	\begin{equation} \label{eq:weights2}
	\gamma_{\setu}:=
	\Bigg(\frac{[(\sigma|\setu|)!]^2}{\max(|\setu|,1)\,[2\zeta(\alpha\lambda)]^{|\setu|}}
	\prod_{j\in \setu} {\rm Bell}_{\sigma}(b_j)^2
	\Bigg)^{\frac{1}{1+\lambda}}\quad\text{for }\emptyset\neq\setu\subseteq\{1:s\},
	\end{equation}
	with $\gamma_{\emptyset}:= 1$. Then the kernel interpolant of the PDE solution in
	Theorem~\ref{thm:PDE-error} satisfies
	\begin{align*}
	\sqrt{\int_{U_s}\int_D \big(u_{s,h}(\bsx,\bsy) - u_{s,h,n}(\bsx,\bsy)\big)^2 \,\rd\bsx\,\rd\bsy}
	\,\le\,
	C\,\|q\|_{H^{-1}(D)}\,n^{-(\frac{1}{2p} - \frac{1}{4})},
	\end{align*}
	where the constant $C>0$ is independent of the truncation dimension
	$s$.
\end{theorem}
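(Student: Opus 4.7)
The plan is to follow the strategy used for Theorem~\ref{thm:spod}, but with a preliminary step that exploits the Bell polynomial identity to collapse the inner sum in~\eqref{eq:C-lambda} to a product, which in turn makes a POD choice of weights feasible. The analysis then reduces to verifying the convergence of a single series in the cardinality $|\setu|$, and the hypothesis $p\in(2/(2k+1),1/k)$ will emerge precisely as the region in which this convergence holds.

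First I would bound the inner sum in~\eqref{eq:C-lambda}. Using $|\bsm_\setu|\le \sigma|\setu|$ together with the identity $S(\sigma,0)=0$ for $\sigma\ge 1$ (so that $\sum_{m=1}^{\sigma}S(\sigma,m)\,x^m={\rm Bell}_\sigma(x)$), we get
\[
\sum_{\bsm_\setu\in\{1:\sigma\}^{|\setu|}}|\bsm_\setu|!\prod_{j\in\setu}\bigl(b_j^{m_j}\,S(\sigma,m_j)\bigr)
\,\le\,(\sigma|\setu|)!\prod_{j\in\setu}{\rm Bell}_\sigma(b_j).
\]
Inserting this estimate together with the weights~\eqref{eq:weights2} into~\eqref{eq:C-lambda}, the definition of $\gamma_\setu$ has been arranged precisely so that the two factors in $[C_s(\lambda)]^{2\lambda}$ become equal (exactly as in the SPOD proof), and a short computation yields
\[
[C_s(\lambda)]^{\frac{2\lambda}{1+\lambda}}
\,\le\,\sum_{\setu\subseteq\{1:s\}}\max(|\setu|,1)^{\frac{1}{1+\lambda}}\,[2\zeta(\alpha\lambda)]^{\frac{|\setu|}{1+\lambda}}\,[(\sigma|\setu|)!]^{\frac{2\lambda}{1+\lambda}}\prod_{j\in\setu}{\rm Bell}_\sigma(b_j)^{\frac{2\lambda}{1+\lambda}}.
\]

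Next I would group terms by cardinality. The elementary inequality $\sum_{|\setu|=\ell}\prod_{j\in\setu}c_j\le(1/\ell!)\bigl(\sum_{j\ge 1}c_j\bigr)^\ell$ applied with $c_j={\rm Bell}_\sigma(b_j)^{p}$ (using $2\lambda/(1+\lambda)=p$) converts the right-hand side into
\[
\sum_{\ell\ge 0}\frac{[(\sigma\ell)!]^{p}\,\max(\ell,1)^{1/(1+\lambda)}\,[2\zeta(\alpha\lambda)]^{\ell/(1+\lambda)}\,T^{\ell}}{\ell!},
\qquad T\,:=\,\sum_{j\ge 1}{\rm Bell}_\sigma(b_j)^p.
\]
The ratio test shows this series converges provided $\sigma p<1$ and $T<\infty$, while $\zeta(\alpha\lambda)<\infty$ requires $\alpha\lambda>1$. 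With the choices $\alpha=2k$, $\sigma=k$ (where $k=\lfloor 1/p\rfloor$), and $\lambda=p/(2-p)$, the condition $\sigma p<1$ rearranges to $p<1/k$, while $\alpha\lambda>1$ rearranges to $p(2k+1)>2$, i.e., $p>2/(2k+1)$. Together these are exactly the hypothesis $p\in(2/(2k+1),1/k)$.

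Finiteness of $T$ follows from (A3): since $b_j\to 0$ and ${\rm Bell}_\sigma$ is a polynomial with ${\rm Bell}_\sigma(0)=0$, we have ${\rm Bell}_\sigma(b_j)\le C b_j$ for all $j$ sufficiently large, hence $T\le\mathrm{const}+C^p\sum_j b_j^p<\infty$. Combining the resulting $s$-independent bound on $C_s(\lambda)$ with Theorem~\ref{thm:PDE-error} and the identity $1/(4\lambda)=1/(2p)-1/4$ yields the stated rate $n^{-(1/(2p)-1/4)}$. The main technical obstacle is the ratio-test computation: the factorial ratio $[(\sigma(\ell+1))!/(\sigma\ell)!]^p\sim(\sigma\ell)^{\sigma p}$ has to be balanced against the factor $\ell+1$ in the denominator, and the critical balance $\sigma p=1$ explains both why the boundary cases $p=1/k$ cannot be handled by this argument and why POD weights only work on the union of open intervals $(2/(2k+1),1/k)$ rather than for all $p<1$.
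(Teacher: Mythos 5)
Your proposal is correct and follows essentially the same route as the paper's proof: the crude bound $(\sigma|\setu|)!\prod_{j\in\setu}{\rm Bell}_\sigma(b_j)$ on the inner sum, equating the two factors of $[C_s(\lambda)]^{2\lambda}$ to produce the weights \eqref{eq:weights2}, regrouping by cardinality via the $\frac{1}{\ell!}(\sum_j c_j)^\ell$ bound, and a ratio test whose two conditions $\sigma p<1$ (equivalently $\alpha\lambda/(1+\lambda)<1$) and $\alpha\lambda>1$ reproduce exactly the admissible range $p\in\bigcup_k(\frac{2}{2k+1},\frac{1}{k})$. The only (harmless) divergence is how you show $T<\infty$: you use that ${\rm Bell}_\sigma$ is a polynomial vanishing at $0$, hence ${\rm Bell}_\sigma(b_j)\le Cb_j$ on the bounded range of the $b_j$, whereas the paper bounds $\sum_{m=1}^\sigma b_j^m$ by a geometric series using the monotonicity assumption (A5); both give $T\lesssim\sum_j b_j^p<\infty$ under (A3).
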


\begin{proof}
	In \eqref{eq:C-lambda} we can apply the crude upper bound
	\begin{align*}
	\sum_{\bsm_{\setu}\in \{1:\sigma\}^{|\setu|}}|\bsm_{\setu}|!
	\prod_{j\in\setu} \big(b_j^{m_j}S(\sigma,m_j) \big)
	&\,\le\, (\sigma|\setu|)!\,\prod_{j\in\setu} \bigg(\sum_{m=1}^\sigma b_j^{m} S(\sigma,m)\bigg)\\
	&
	\,=\, (\sigma|\setu|)! \prod_{j\in\setu} {\rm Bell}_{\sigma}(b_j),
	\end{align*}
	which leads to
	\begin{align}
	&\hspace{-8pt}[C_s(\lambda)]^{2\lambda}\notag\\
	&\hspace{-9pt}
	\le \bigg(\!\sum_{\setu\subseteq\{1:s\}}
	\!\!\max(|\setu|,1)\gamma_{\setu}^{\lambda}\,[2\zeta(\alpha\lambda)]^{|\setu|}\bigg)
	\bigg(\!\sum_{\setu\subseteq\{1:s\}} \!\!\!
	\frac{\big[(\sigma|\setu|)!
		\prod_{j\in \setu} {\rm Bell}_{\sigma}(b_j)\big]^2}{\gamma_{\setu}} \bigg)^{\lambda}.\label{eq:PODC-lambda}
	\end{align}
	{\sloppy We equate the terms in the two sums in \eqref{eq:PODC-lambda} to obtain
		the weights \eqref{eq:weights2}. Let us again define
		$S_{\max}(\sigma):=\max_{1\leq m\leq \sigma}S(\sigma,m)$, so that ${\rm
			Bell}_{\sigma}(b_j) \le S_{\max}(\sigma) \sum_{m=1}^\sigma b_j^m$.
		Plugging the weights back into~\eqref{eq:PODC-lambda} then yields
		\begin{align*}
		[C_s(\lambda)]^{\frac{2\lambda}{1+\lambda}}
		&\le \sum_{\setu\subseteq\{1:s\}}
		\biggl(\max(|\setu|,1)\, [2\zeta(\alpha\lambda)]^{|\setu|}\biggr)^{\frac{1}{1+\lambda}}
		\biggl((\sigma|\setu|)!
		\prod_{j\in\setu}{\rm Bell}_{\sigma}(b_j)\biggr)^{\frac{2\lambda}{1+\lambda}}\\
		&\le \sum_{\ell=0}^\infty
		\bigg(\max(\ell,1)\, [2\zeta(\alpha\lambda)]^\ell\bigg)^{\frac{1}{1+\lambda}}
		\bigg((\sigma\ell)!\, [S_{\max}(\sigma)]^\ell\bigg)^{\frac{2\lambda}{1+\lambda}}\notag\\
		&\phantom{  \sum_{\ell=0}^\infty
			\bigg(\max(\ell,1)\, [2\zeta(\alpha\lambda)]^\ell\bigg)^{\frac{1}{1+\lambda}}
			\bigg((\sigma\ell)!\,   }
		\times\underset{=:\,V_\ell}{\underbrace{\sum_{\substack{\setu\subseteq\{1:s\}\\\,|\setu|=\ell}}
				\prod_{j\in\setu}\bigg(\sum_{m=1}^{\sigma}b_j^m\bigg)^{\frac{2\lambda}{1+\lambda}}}}.
		\end{align*}
	}
	To estimate $V_\ell$, we have
	\begin{align*}
	V_\ell
	&\le \frac{1}{\ell!}\bigg(\sum_{j=1}^\infty
	\bigg(\sum_{m=1}^{\sigma}b_j^m\bigg)^{\frac{2\lambda}{1+\lambda}}\bigg)^\ell
	=\frac{1}{\ell!}\bigg(\sum_{j=1}^\infty \bigg(\sum_{m=1}^{\sigma}(1+b_1)^m
	\bigg(\frac{b_j}{1+b_1}\bigg)^m\bigg)^{\frac{2\lambda}{1+\lambda}}\bigg)^\ell\\
	&\le \frac{1}{\ell!}\bigg((1+b_1)^{\frac{2\sigma\lambda}{1+\lambda}}
	\sum_{j=1}^\infty \bigg(\frac{b_j}{1+b_1-b_j}\bigg)^{\frac{2\lambda}{1+\lambda}}\bigg)^\ell
	\le \frac{1}{\ell!}\bigg(\underbrace{(1+b_1)^{\frac{2\sigma\lambda}{1+\lambda}}
		\sum_{j=1}^\infty b_j^{\frac{2\lambda}{1+\lambda}}}_{=:\,T}\bigg)^\ell,
	\end{align*}
	where we estimated the sum over $m$ by the geometric series formula and
	used $1+b_1-b_j\ge 1$ as a consequence of the assumption~(A5). 	

	In consequence, we have $[C_s(\lambda)]^{\frac{2\lambda}{1+\lambda}}
	\le
	\sum_{\ell=0}^\infty a_\ell$, with
	\[
	a_\ell \,:=\,
	[\max(\ell,1)]^{\frac{1}{1+\lambda}}\,
	[2\zeta(\alpha\lambda)]^{\frac{\ell}{1+\lambda}}\,
	[(\sigma\ell)!]^{\frac{2\lambda}{1+\lambda}}\,
	[S_{\max}(\sigma)]^{\frac{2\lambda \ell}{1+\lambda}}\,\frac{1}{\ell!}\, T^\ell>0.
	\]
	We can use the ratio test to determine sufficient conditions for the
	convergence of the infinite sum over $\ell$. Letting $\ell>0$, we find
	that
	\begin{align*}
	\frac{a_{\ell+1}}{a_\ell}
	&=
	\Bigl(\frac{\ell+1}{\ell}\Bigr)^{\frac{1}{1+\lambda}}\,
	[2\zeta(\alpha\lambda)]^{\frac{1}{1+\lambda}}\,
	[(\sigma\ell+\sigma)\cdots(\sigma\ell+1)]^{\frac{2\lambda}{1+\lambda}}\,
	[S_{\max}(\sigma)]^{\frac{2\lambda}{1+\lambda}}\,
	\frac{T}{\ell+1} \\
	&\le
	\Bigl(\frac{\ell+1}{\ell}\Bigr)^{\frac{1}{1+\lambda}}\,
	[2\zeta(\alpha\lambda)]^{\frac{1}{1+\lambda}}\,
	(\sigma\ell+\sigma)^{\frac{2\sigma \lambda}{1+\lambda}}\,
	[S_{\max}(\sigma)]^{\frac{2\lambda}{1+\lambda}}\,
	\frac{T}{\ell+1}
	\xrightarrow{\ell\to\infty} 0,
	\end{align*}
	provided that $\frac{2\sigma \lambda}{1+\lambda} =
	\frac{\alpha\lambda}{1+\lambda}<1$ and $\alpha\lambda>1$. In conclusion,
	by choosing $\frac{2\lambda}{1+\lambda}=p$ $\Longleftrightarrow$
	$\lambda=\frac{p}{2-p}$, it follows from Theorem~\ref{thm:PDE-error} that
	the convergence is independent of $s$ with rate
	$\mathcal{O}(n^{-(\frac{1}{2p}-\frac{1}{4})})$, provided that
	$$
	\frac{1}{\alpha}<\lambda<\frac{1}{\alpha-1}
	\quad\Longleftrightarrow\quad
	\alpha - 1 < \frac{2}{p}-1 < \alpha.
	$$
	Unfortunately this condition cannot be fulfilled for all values of $p$,
	since $\alpha=2\sigma$ needs to be an even integer. Indeed, the condition
	is equivalent to
	$$
	\frac{2}{2\sigma+1} < p < \frac{1}{\sigma}
	\quad\Longleftrightarrow\quad
	\frac{1}{p}-\frac{1}{2} < \sigma < \frac{1}{p}.
	$$
	We conclude that this condition is met if $p\in
	\bigcup_{k=1}^\infty\big(\frac{2}{2k+1},\frac{1}{k}\big)$ by choosing
	$\alpha = 2\lfloor\frac{1}{p}\rfloor$.
	\end{proof}

The Lebesgue measure of the set of admissible values for $p$ is precisely
$\mu\bigl(\bigcup_{k=1}^\infty
(\frac{2}{2k+1},\frac{1}{k})\bigr)=2-\log(4)\approx 0.61$.
Nevertheless, even if $p\not\in \bigcup_{k=1}^\infty
\bigl(\frac{2}{2k+1},\frac{1}{k}\bigr)$ we can always choose $\tilde{p}>p$
such that $\tilde{p}\in
\bigcup_{k=1}^\infty\big(\frac{2}{2k+1},\frac{1}{k}\big)$ and a
correspondingly larger value of $\lambda$. The theorem then holds but with
some loss in the rate of convergence.

\subsubsection{Choosing product weights}

In the next theorem we increase our error bounds to obtain product
weights, which have the benefit of a lower computational cost (see
Section~\ref{sec:cost}), but with the disadvantage of a compromised
theoretical convergence rate.

\begin{theorem} \label{thm:prod}
	Assume that \textnormal{(A1)--(A3)}, \textnormal{(A5)} and
	\textnormal{(A6) hold, and further assume that $p < \frac{1}{2}$
		in~\textnormal{(A3)}.} If $p\in \bigcup_{k=1}^\infty
	[\frac{2}{4k+3},\frac{2}{4k+1}]$ we take $\alpha :=
	2\lfloor\tfrac{1}{2p}-\tfrac14\rfloor$, $\sigma := \frac{\alpha}{2}$, and
	$\lambda:= \frac{1}{2\sigma-4\delta}$ for arbitrary $\delta\in
	(0,\frac{\sigma}{2}-\frac14)$. If $p\in (\frac{2}{5},\frac{1}{2}) \cup
	\bigcup_{k=1}^\infty (\frac{2}{4k+5},\frac{2}{4k+3})$ we take
	$\alpha:=2\lceil \frac{1}{2p} - \frac{1}{4}\rceil$,
	$\sigma:=\frac{\alpha}{2}$, and $\lambda:=
	\frac{1}{2/p-1-2\sigma-4\delta}$ for arbitrary $\delta\in
	(0,\frac{1}{2p}-\frac{1}{2}-\frac{\sigma}{2})$. We define product weights
	\begin{equation} \label{eq:prod-weights}
	\gamma_{\setu} \,:=\,
	\prod_{j\in\setu}\bigg(\frac{\big[(j\sigma)^{\sigma}{\rm Bell}_{\sigma}(b_j)\big]^2}{2{\rm e}^{1/{\rm e}}\zeta(\alpha\lambda)}
	\bigg)^{\frac{1}{1+\lambda}}
	\qquad\text{for }\emptyset\neq\setu\subseteq\{1:s\},
	\end{equation}
	with $\gamma_{\emptyset}:= 1$. Then the kernel interpolant of the PDE
	solution in Theorem~\ref{thm:PDE-error} satisfies
	\begin{align*}
	& \sqrt{\int_{U_s}\int_D \big(u_{s,h}(\bsx,\bsy) - u_{s,h,n}(\bsx,\bsy)\big)^2 \,\rd\bsx\,\rd\bsy}\\
	&\le
	\begin{cases}
	C\,\|q\|_{H^{-1}(D)}\,n^{-(\frac{1}{2}\lfloor\frac{1}{2p}-\frac14\rfloor-\delta)}
	&\text{for}\ \ p\in\, \bigcup_{k=1}^\infty [\frac{2}{4k+3},\frac{2}{4k+1}], \\
	C\,\|q\|_{H^{-1}(D)}\,n^{-(\frac{1}{2p}-\frac14-\frac12\lceil \frac{1}{2p}-\frac14\rceil-\delta)}
	&\text{for}\ \  p \in\, (\tfrac{2}{5},\tfrac{1}{2})\cup \bigcup_{k=1}^\infty( \frac{2}{4k+5},\frac{2}{4k+3}),
	\end{cases}
	\end{align*}
	where the constant $C>0$ is independent of the truncation dimension $s$.
\end{theorem}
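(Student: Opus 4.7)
The plan is to apply Theorem~\ref{thm:PDE-error} with the product weights (\ref{eq:prod-weights}), reducing the problem to bounding $C_s(\lambda)$ uniformly in $s$. The key move is to further simplify the inner sum in the definition (\ref{eq:C-lambda}) of $C_s(\lambda)$ into a product form over the coordinates, so that the sums over subsets $\setu$ collapse via $\sum_{\setu\subseteq\{1:s\}}\prod_{j\in\setu}x_j=\prod_{j=1}^s(1+x_j)$.

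First, I would apply the crude inner-sum bound already used in the proof of Theorem~\ref{thm:pod},
\[
\sum_{\bsm_{\setu}\in\{1:\sigma\}^{|\setu|}}|\bsm_{\setu}|!\prod_{j\in\setu}\bigl(b_j^{m_j}S(\sigma,m_j)\bigr)\,\le\,(\sigma|\setu|)!\prod_{j\in\setu}{\rm Bell}_{\sigma}(b_j),
\]
and then use the elementary estimate $(\sigma|\setu|)!\le\prod_{k=1}^{|\setu|}(k\sigma)^{\sigma}\le\prod_{j\in\setu}(j\sigma)^{\sigma}$, where the second inequality uses the fact that the $k$-th smallest element of $\setu\subseteq\bbN$ is at least $k$. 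Combined with the bound $\max(|\setu|,1)\le[\mathrm{e}^{1/\mathrm{e}}]^{|\setu|}$, this renders the right-hand side of (\ref{eq:C-lambda}) a product of two sums over $\setu$ whose summands are products over $j\in\setu$. Plugging in the product weights (\ref{eq:prod-weights}), a direct computation shows that the $j$-th factor of the resulting product $\prod_{j=1}^s(1+x_j)$ in both sums is the same quantity
\[
c_j\,:=\,[2\mathrm{e}^{1/\mathrm{e}}\zeta(\alpha\lambda)]^{1/(1+\lambda)}\bigl[(j\sigma)^{\sigma}{\rm Bell}_{\sigma}(b_j)\bigr]^{2\lambda/(1+\lambda)},
\]
so that $[C_s(\lambda)]^{2\lambda}\le\prod_{j=1}^s(1+c_j)^{1+\lambda}$.

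Next, I would bound $\prod_{j\ge 1}(1+c_j)$ uniformly in $s$. By assumption (A5) one has ${\rm Bell}_{\sigma}(b_j)\le C\,b_j$ with $C$ depending only on $\sigma$ and $b_1$, and by (A3), (A5) together with the Stechkin-type bound from~\cite[Theorem~5.1]{Kuo.F_Schwab_Sloan_2012_SINUM} one has $b_j\le C'\,j^{-1/p}$. Hence the summability $\sum_j c_j<\infty$ is equivalent to $\sum_j j^{(2\lambda/(1+\lambda))(\sigma-1/p)}<\infty$, which holds precisely when $\lambda>1/(2/p-2\sigma-1)$ (with $2/p-2\sigma-1>0$ required). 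Together with the standing requirements $\alpha\lambda>1$ (i.e.\ $\lambda>1/(2\sigma)$) and $\lambda\in(\frac{1}{\alpha},1]$ from Theorem~\ref{thm:wce}, this dictates the choice of $\lambda$; substitution into Theorem~\ref{thm:PDE-error} then gives the stated error rate $n^{-1/(4\lambda)}$.

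Finally, I would split into the two cases stated in the theorem, picking $\sigma$ (and thus $\alpha$) to minimise $\lambda$. In \emph{Case 1} ($p\in[\tfrac{2}{4k+3},\tfrac{2}{4k+1}]$, $\sigma=k=\lfloor\tfrac{1}{2p}-\tfrac14\rfloor$), the constraint $\lambda>1/(2\sigma)$ is binding because $2/p-2\sigma-1\ge 2\sigma$, so $\lambda=1/(2\sigma-4\delta)$ works for $\delta\in(0,\sigma/2-\tfrac14)$, and produces the rate $n^{-(\sigma/2-\delta)}=n^{-(\frac12\lfloor 1/(2p)-1/4\rfloor-\delta)}$. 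In \emph{Case 2} ($p\in(\tfrac25,\tfrac12)\cup\bigcup_k(\tfrac{2}{4k+5},\tfrac{2}{4k+3})$, $\sigma=\lceil\tfrac{1}{2p}-\tfrac14\rceil$), the summability constraint is the binding one, so $\lambda=1/(2/p-2\sigma-1-4\delta)$ is the optimal choice and delivers the rate $n^{-(1/(2p)-1/4-\sigma/2-\delta)}$; the condition $\lambda\le 1$ is what excludes $p=1/2$. The main obstacle I foresee is the case-by-case verification that the proposed $\lambda$ lies in the admissible range $(\tfrac{1}{\alpha},1]$ simultaneously with the summability $\lambda>1/(2/p-2\sigma-1)$ in both cases, especially near the interval endpoints and for small $k$; this is routine but must be done carefully.
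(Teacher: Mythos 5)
Your proposal is correct and follows essentially the same route as the paper's proof: the same crude bounds $(\sigma|\setu|)!\le\prod_{j\in\setu}(j\sigma)^{\sigma}$ and $\max(|\setu|,1)\le[\mathrm{e}^{1/\mathrm{e}}]^{|\setu|}$, the same weights obtained by equating the two sums, the same summability condition $\lambda>1/(2/p-1-2\sigma)$ combined with $\lambda\in(\frac{1}{\alpha},1]$, and the same two scenarios with the same choices of $\sigma$ and $\lambda$. The only cosmetic difference is that you collapse the sum over $\setu$ directly into the product $\prod_j(1+c_j)$, whereas the paper groups by $|\setu|=\ell$ and applies a ratio test; these are equivalent here.
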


\begin{proof}
	Starting again from the equation \eqref{eq:PODC-lambda}, we apply further crude upper
	bounds $\max(|\setu|,1)\le [\mathrm{e}^{1/\mathrm{e}}]^{|\setu|}$ and
	\begin{align*}
	(\sigma |\setu|)! \,=\, \prod_{j=1}^{|\setu|}\prod_{k=0}^{\sigma-1}(j\sigma-k)
	\,\le\, \prod_{j=1}^{|\setu|} (j\sigma)^\sigma \,\le\, \prod_{j\in \setu}(j\sigma)^{\sigma},
	\end{align*}
	to arrive at
	\begin{align} \label{eq:loose}
	[C_s(\lambda)]^{2\lambda}
	\le \bigg(\!\sum_{\setu\subseteq\{1:s\}}
	\gamma_{\setu}^{\lambda}\,[2\mathrm{e}^{1/\mathrm{e}}\zeta(\alpha\lambda)]^{|\setu|}\bigg)
	\bigg(\!\sum_{\setu\subseteq\{1:s\}} \!\!\!
	\frac{
		\prod_{j\in\setu} \big[(j\sigma)^{\sigma}{\rm Bell}_{\sigma}(b_j)\big]^2}{\gamma_{\setu}} \bigg)^{\lambda}.
	\end{align}
	We equate the terms in the two sums in \eqref{eq:loose} to obtain the
	product weights \eqref{eq:prod-weights}. Plugging the weights back
	into~\eqref{eq:loose} and following the argument in the proof of
	Theorem~\ref{thm:pod}, we obtain
	\begin{align*}
	& [C_s(\lambda)]^{\frac{2\lambda}{1+\lambda}}
	\leq\sum_{\setu\subseteq\{1:s\}}\prod_{j\in \setu} \bigg(
	[2{\rm e}^{1/{\rm e}}\zeta(\alpha\lambda)]^{\frac{1}{1+\lambda}}\,
	[(j\sigma)^{\sigma}{\rm Bell}_{\sigma}(b_j)]^{\frac{2\lambda}{1+\lambda}} \bigg)\\
	&\leq\sum_{\ell=0}^\infty[2{\rm e}^{1/{\rm e}}\zeta(\alpha\lambda)]^{\frac{\ell}{1+\lambda}}\,
	\big[ \sigma^{\sigma} S_{\max}(\sigma)\big]^{\frac{2\lambda}{1+\lambda}\ell}
	\underbrace{
		\sum_{\substack{\setu\subseteq\{1:s\}\\ |\setu|=\ell}}\prod_{j\in \setu}
		\bigg(j^{\sigma}\sum_{m=1}^\sigma b_j^{m}\bigg)^{\frac{2\lambda}{1+\lambda}}}_{=:\, V_\ell},
	\end{align*}
	with
	\begin{align*}
	V_\ell \,\le\, \frac{1}{\ell!}
	\bigg((1+b_1)^{\frac{2\sigma\lambda}{1+\lambda}}
	\sum_{j=1}^\infty(j^{\sigma}b_j)^{\frac{2\lambda}{1+\lambda}}\bigg)^\ell,
	\end{align*}
	Now one can easily check using the ratio test that the term $C_s(\lambda)$
	can be bounded independently of $s$ as long as the series
	$\sum_{j=1}^\infty (j^\sigma b_j)^{\frac{2\lambda}{1+\lambda}}$ is
	convergent.

	From the monotonicity of $(b_j)_{j\ge 1}$ in the assumption (A5) it
	follows that $b_j\leq j^{-1/p}(\sum_{k=1}^\infty b_k^p)^{1/p}$ for all
	$j\ge1$, implying
	$$
	\sum_{j=1}^\infty (j^\sigma b_j)^{\frac{2\lambda}{1+\lambda}}
	\leq \bigg(\sum_{k=1}^\infty b_k^p\bigg)^{\frac{2\lambda p}{1+\lambda}}
	\sum_{j=1}^\infty j^{-(\frac{1}{p}-\sigma)\frac{2\lambda}{1+\lambda}},
	$$
	which is finite provided that
	\[
	\bigg(\frac{1}{p}- \sigma\bigg)\frac{2\lambda}{1+\lambda} > 1
	\quad\iff\quad
	\lambda > \frac{1}{\frac{2}{p}-1-2\sigma}.
	\]
	Taking into account also the requirement that $\frac{1}{\alpha} <
	\lambda\le 1$ and that $\alpha = 2\sigma$ be an even integer, we have the
	constraint
	\begin{align} \label{eq:max}
	\max\bigg(\frac{1}{2\sigma},\frac{1}{\frac{2}{p}-1-2\sigma}\bigg) < \lambda \le 1.
	\end{align}
	We consider two scenarios below depending on the value of the maximum.

	\underline{Scenario A}. If $2\sigma \le \frac{2}{p}-1-2\sigma$ then $p\le
	\frac{2}{4\sigma+1}$ and $\sigma\le \frac{1}{2p} - \frac{1}{4}$, while the
	condition \eqref{eq:max} simplifies to $\frac{1}{2\sigma}< \lambda \le 1$.
	Since $\sigma$ must be an integer and at least~$1$, this scenario applies
	only when $p\in (0,\frac{2}{5}]$. In this case the best convergence rate
	is obtained by taking $\lambda$ as close to $\frac{1}{2\sigma}$ as
	possible and $\sigma$ as large as possible. Hence we take $\sigma :=
	\lfloor \frac{1}{2p} - \frac{1}{4}\rfloor$ and $\lambda:=
	\frac{1}{2\sigma-4\delta}$ for arbitrary $\delta\in
	(0,\frac{\sigma}{2}-\frac14)$. By Theorem~\ref{thm:PDE-error} this yields
	the convergence rate $\mathcal{O}(n^{-(\frac{1}{2} \lfloor \frac{1}{2p} -
		\frac{1}{4}\rfloor - \delta)})$ with the implied constant independent of
	the dimension~$s$, but approaching $\infty$ as $\delta \to 0$.

	\underline{Scenario B}. On the other hand, if $2\sigma >
	\frac{2}{p}-1-2\sigma$ then $p > \frac{2}{4\sigma+1}$ and $\sigma >
	\frac{1}{2p} - \frac{1}{4}$, while the condition \eqref{eq:max} becomes
	$\frac{1}{2/p-1-2\sigma}< \lambda \le 1$. Additionally, for the latter
	condition on $\lambda$ to hold we require that $\frac{2}{p} -1 - 2\sigma >
	1$, which means $p < \frac{1}{\sigma+1}$ and $\sigma< \frac{1}{p}-1$.
	Combining all constraints we have
	\[
	\frac{1}{\frac{2}{p}-1-2\sigma}< \lambda \le 1
	\quad\mbox{and}\quad
	\frac{2}{4\sigma+1} < p < \frac{1}{\sigma+1}
	\quad\mbox{and}\quad
	\frac{1}{2p} - \frac{1}{4} < \sigma < \frac{1}{p}-1.
	\]
	Since $\sigma$ must be an integer and at least $1$, this scenario applies
	only when $p\in \bigcup_{k=1}^\infty (\frac{2}{4k+1},\frac{1}{k+1}) =
	(0,\tfrac{1}{3}) \cup (\tfrac{2}{5},\tfrac{1}{2})$. In this case the best
	convergence rate is obtained by taking $\lambda$ as close to
	$\frac{1}{2/p-1-2\sigma}$ as possible but now with $\sigma$ as small as
	possible. Hence we take $\sigma := \lceil \frac{1}{2p} -
	\frac{1}{4}\rceil$ and $\lambda:= \frac{1}{2/p-1-2\sigma-4\delta}$ for
	arbitrary $\delta\in (0,\frac{1}{2p}-\frac{1}{2}-\frac{\sigma}{2})$. This
	yields the convergence rate $\mathcal{O}(n^{- (\frac{1}{2p} - \frac{1}{4}
		- \frac{1}{2} \lceil \frac{1}{2p} - \frac{1}{4}\rceil - \delta)})$, with
	the implied constant independent of the dimension~$s$.

	If $p \in (\frac{2}{5},\frac{1}{2})$ then only Scenario B applies.
	\vspace{0.1cm}

	If $p\in [\frac{1}{3},\frac{2}{5}]$ then only Scenario A applies.
	\vspace{0.1cm}

	If $p\in (0,\tfrac{1}{3})$ then both scenarios apply, and it remains to
	resolve which scenario to use in order to obtain the better convergence
	rate. For convenience we abbreviate $x := \frac{1}{2p}-\frac14$ and $m :=
	\lfloor\frac{1}{2p}-\frac14\rfloor$, noting that $m\ge 1$ since $p <
	\frac{1}{3}$. Scenario~B has a better convergence rate than Scenario~A if
	and only if $\frac{1}{2} \lfloor x\rfloor < x - \frac{1}{2}\lceil
	x\rceil$. The latter condition is not satisfied if $x\in \mathbb{Z}$,
	while for $x\notin \mathbb{Z}$ the condition is equivalent to $\lfloor x
	\rfloor + \tfrac12 < x < \lceil x \rceil $. Hence the condition is
	equivalent to
	\[
	m + \frac{1}{2} < \frac{1}{2p}-\frac14 < m + 1
	\quad\iff\quad
	\frac{2}{4m+5} < p < \frac{2}{4m+3}.
	\]
	We conclude that for the case  $p<\tfrac{1}{3}$ we should use Scenario~B when $p\in
	\bigcup_{k=1}^\infty \Bigl(\frac{2}{4k+5},\frac{2}{4k+3}\Bigr)$  and use Scenario~A
	when $p\in [\frac{2}{7},\frac{1}{3}) \cup \bigcup_{k=2}^\infty
	[\frac{2}{4k+3},\frac{2}{4k+1}]$.

	Combining the above analysis, we should apply Scenario~B when $p\in
	(\frac{2}{5},\frac{1}{2}) \cup \bigcup_{k=1}^{\infty}
	(\frac{2}{4k+5},\frac{2}{4k+3})$ and apply Scenario~A when $p \in
	[\frac{2}{7},\frac{2}{5}] \cup \bigcup_{k=2}^{\infty}$
	$[\frac{2}{4k+3},\frac{2}{4k+1}] = \bigcup_{k=1}^{\infty}
	[\frac{2}{4k+3},\frac{2}{4k+1}]$.
\end{proof}
\subsection{Combined approximation error}

The combined approximation error of the PDE problem~\eqref{eq:pdeweak} can be decomposed as
\begin{align*}
&\sqrt{\int_U\int_D(u(\bsx,\bsy)-u_{s,h,n}(\bsx,\bsy))^2\,{\rm d}\bsx\,{\rm d}\bsy}\\
&\qquad\qquad\qquad\qquad
\,\leq\, \: \sqrt{\int_U\int_D(u(\bsx,\bsy)-u_s(\bsx,\bsy))^2\,{\rm d}\bsx\,{\rm d}\bsy}\\
&\qquad\qquad\qquad\qquad\qquad
+\sqrt{\int_U\int_D(u_s(\bsx,\bsy)-u_{s,h}(\bsx,\bsy))^2\,{\rm d}\bsx\,{\rm d}\bsy}\\
&\qquad\qquad\qquad\qquad\qquad
+\sqrt{\int_U\int_D(u_{s,h}(\bsx,\bsy)-u_{s,h,n}(\bsx,\bsy))^2\,{\rm d}\bsx\,{\rm d}\bsy},
\end{align*}
where the first term is the dimension truncation error, the second term
is the finite element error, and the final term is the kernel
interpolation error. Combining the results developed in
Sections~\ref{sec:dimtruncerror}--\ref{sec:kernelerror}, we arrive at the
following result.

\begin{theorem}\label{thm:combined_error}
	Assume that \textnormal{(A1)--(A6)} hold. For any $\bsy\in U$, let
	$u(\cdot,\bsy)\in H_0^1(D)$ denote the solution to~\eqref{eq:pdeweak} with
	the source term $q\in H^{-1+t}(D)$ for some $0\leq t\leq 1$. Let
	$u_{s,h}(\cdot,\bsy)\in V_h$ be the corresponding dimensionally truncated
	finite element solution and let
	$u_{s,h,n}(\bsx,\cdot)=A^\ast_n(u_{s,h}(\bsx,\cdot))$ be its kernel
	interpolant constructed using the weights described in
	Theorems~\ref{thm:spod}, \ref{thm:pod}, or~\ref{thm:prod}. Then we have
	the combined error estimate
	\begin{align*}
	&\sqrt{\int_U\int_D (u(\bsx,\bsy)-u_{s,h,n}(\bsx,\bsy))^2\,{\rm d}\bsx\,{\rm d}\bsy}\\
	&\qquad\qquad
	\,\le\, C\,\Big( \big(s^{-(\frac{1}{p}-\frac{1}{2})}+n^{-r}\big)\|q\|_{H^{-1}(D)}+h^{1+t}\|q\|_{H^{-1+t}(D)}\Big),
	\end{align*}
	where $0\leq t\leq 1$, $h$ denotes the  mesh size of the piecewise linear
	finite element mesh, $C>0$ is a constant independent of $s$, $h$, $n$,
	$q$, and
	\begin{align*}
	r =
	\begin{cases}
	\frac{1}{2p}-\frac14
	& \text{with weights \eqref{eq:bad-weights} or SPOD weights \eqref{eq:spod-weights}}, \\[2.5pt]
	\frac{1}{2p}-\frac14
	& \text{with POD weights \eqref{eq:weights2} for }
	p \,\in\,\bigcup_{k=1}^\infty (\frac{2}{2k+1},\frac{1}{k}), \\[3pt]
	\frac{1}{2}\lfloor\frac{1}{2p}-\frac14\rfloor -\delta
	&
	\text{with product weights } \text{\eqref{eq:prod-weights}} \text{ for } \\&
	\qquad\qquad\qquad{\textstyle p \,\in\,\bigcup_{k=1}^\infty [\frac{2}{4k+3},\frac{2}{4k+1}]},\\
	\frac{1}{2p}-\frac14-\frac12\lceil \frac{1}{2p}-\frac14\rceil -\delta\!\!\!\!\!
	&
	\text{with product weights } \text{\eqref{eq:prod-weights}} \text{ for } \\[1.5pt]
	&
	\qquad\qquad\qquad{\textstyle p \,\in\, (\tfrac{2}{5},\tfrac{1}{2}) \,\cup\, \bigcup_{k=1}^\infty( \frac{2}{4k+5},\frac{2}{4k+3})},
	\end{cases}
	\end{align*}
	and $\delta>0$ is sufficiently small in each case.
\end{theorem}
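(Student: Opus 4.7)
The plan is to apply the triangle inequality decomposition already displayed just before the theorem statement and then bound each of the three resulting terms by invoking the results of Sections~\ref{sec:dimtruncerror}--\ref{sec:kernelerror}, verifying that all hypotheses are met under \textnormal{(A1)--(A6)}.

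For the \emph{dimension truncation} term $\sqrt{\int_U\int_D (u-u_s)^2\,\rd\bsx\,\rd\bsy}$, I would appeal to Theorem~\ref{thm:dimensiontruncation} with the choice $\xi(y):=\tfrac{1}{\sqrt 6}\sin(2\pi y)$, which is $L_\infty([0,1])$ and satisfies the zero-mean condition \eqref{eq:property-xi} so that \eqref{eq:a-tilde} reduces to \eqref{eq:a-param}. Assumptions (A1), (A3) and (A5) are in force, and (A2) is given. This immediately yields the bound $C\,\|q\|_{H^{-1}(D)}\,s^{-(1/p-1/2)}$.

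For the \emph{finite element} term $\sqrt{\int_U\int_D (u_s - u_{s,h})^2\,\rd\bsx\,\rd\bsy}$, I would observe that for each fixed $\bsy\in U$ the pair $(u_s(\cdot,\bsy),u_{s,h}(\cdot,\bsy))$ agrees with $(u(\cdot,\bsy'),u_h(\cdot,\bsy'))$ for $\bsy'=(y_1,\ldots,y_s,0,0,\ldots)\in U$, so the finite element error theorem of Section~\ref{sec:femerror} applies uniformly in $\bsy$ under (A1), (A2), (A4), (A6), giving $\|u_s(\cdot,\bsy)-u_{s,h}(\cdot,\bsy)\|_{L_2(D)}\le C\,h^{1+t}\|q\|_{H^{-1+t}(D)}$ with $C$ independent of $\bsy$ and $s$. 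Squaring, integrating over the product uniform measure on $U$, and taking the square root preserves the bound since the integrand is $\bsy$-independent.

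Finally, for the \emph{kernel interpolation} term, I would invoke Theorem~\ref{thm:PDE-error} combined with whichever of Theorems~\ref{thm:spod}, \ref{thm:pod}, or~\ref{thm:prod} corresponds to the chosen family of weights; each delivers a bound of the form $C\,\|q\|_{H^{-1}(D)}\,n^{-r}$ with the exponent $r$ listed in the statement and a constant independent of $s$. Summing the three contributions via the triangle inequality and absorbing constants produces the asserted combined estimate. The main conceptual point — rather than an obstacle — is simply the bookkeeping of which range of $p$ activates which weight choice and convergence rate $r$; this is already settled in Theorems~\ref{thm:spod}--\ref{thm:prod}, so no further work is needed beyond quoting those results in the appropriate case.
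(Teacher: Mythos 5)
Your proposal is correct and follows essentially the same route as the paper: the paper itself gives no separate proof beyond displaying the triangle-inequality decomposition into dimension truncation, finite element, and kernel interpolation errors and then citing Theorem~\ref{thm:dimensiontruncation}, the finite element error theorem of Section~\ref{sec:femerror}, and Theorems~\ref{thm:spod}--\ref{thm:prod} respectively. Your additional remarks --- the identification $\xi=\tfrac{1}{\sqrt 6}\sin(2\pi\cdot)$ for the truncation bound and the observation that the finite element estimate is uniform in $\bsy$ and hence survives the $L_2(U)$ integration --- are exactly the (implicit) verifications the paper relies on.
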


\section{Cost analysis}\label{sec:cost}
\subsection{What is the point set at which values are wanted?}
In this section we consider the cost of evaluating the kernel interpolant
\[
f_n(\bsy) \,=\, \sum_{k=1}^n a_k\, K(\bst_k,\bsy),
\]
as an approximation to the periodic function~$f$, with lattice points
$\bst_k = \{\frac{k\bsz}{n}\}$, $k=1,\ldots,n$, and
$\bst_n=\bst_0=\bszero$. Recall that all our functions including the
kernel are $1$-periodic with respect to~$\bsy$. For the linear system
\eqref{eq:lin_sys}, as observed already, the matrix $\calK =
[K(\bst_k-\bst_{k'},\bszero)]_{k,k'=1,\ldots,n}$ is circulant, thus we
need to compute only its first column (see the cost for evaluating the
kernel in the next subsection) and then solve for the coefficients $a_k$
with a cost of $\calO(n\log(n))$.

First, however, it turns out to be useful to ask: what is the set of
points, say $\{\bsy_1, \bsy_2,\ldots\}$, at which the values of the
interpolant are desired?  If $L$ such  points $\bsy_\ell$, $\ell =
1,\ldots, L$, are chosen arbitrarily then the cost, naturally, is $L$
times the cost of a single evaluation. On the other hand, for a set of
$Ln$ points formed by the \emph{union of shifted lattices} $\bsy_\ell +
\bst_{k'}$, $\ell = 1,\ldots,L$, $k' = 1,\ldots n$, it turns out that the
cost for $Ln$ evaluations is little more than the cost of the $L$
evaluations at arbitrary points.

The reason for the low cost lies in the shift invariance of the kernel and
the group nature of the lattice. For a single given $\bsy$ the principal
costs for evaluating the kernel interpolant come from evaluating
$K(\bst_k,\bszero)$ and $f(\bst_k)$ at the $n$ lattice points; then
solving the circulant linear system \eqref{eq:lin_sys} for the $n$ values
of $a_k$; from evaluating $K(\bst_k,\bsy)$ at the $n$ lattice points; and
finally from assembling $f_n(\bsy)$ with a cost of $\calO(n)$. (The
precise cost breakdown is given in Table~\ref{tab:cost1} below after we
discuss the cost for evaluating the kernel in the next subsection.)

But for evaluation of $K(\bst_k,\bsy + \bst_{k'})$ for all $n$ values $k'=
1,\ldots, n$ we observe that $K(\bst_k,\bsy + \bst_{k'}) = K(\bst_k
-\bst_{k'},\bsy)$, and hence
\begin{equation}\label{eq:efficientfn}
f_n(\bsy + \bst_{k'}) \,=\, \sum_{k=1}^n a_k\,K(\bst_k - \bst_{k'}, \bsy).
\end{equation}
Since the right-hand side has the form of a circulant $n\times n$ matrix
multiplying a vector of length~$n$, the $n$ values $f_n(\bsy +
\bst_{k'})$ for $k'=1,\dots,n$ can be assembled with a cost of $\calO(n
\log(n))$, compared with the $\calO(n)$ cost of assembling $f_n$ at a
single value of $\bsy$.
\subsection{Cost for evaluating the kernel for a single \texorpdfstring{$\bsy$}{bsy}}Now consider the cost of computing $K(\bst,\bsy)$ for a single arbitrary
value of $\bsy$ and arbitrary $\bst$,
\begin{align*}
K(\bst,\bsy)
\,=\, \sum_{\setu\subseteq\{1:s\}} \gamma_\setu  \prod_{j\in\setu} \eta_\alpha(t_j,y_j)
\qquad\mbox{for}\quad k=1,\ldots,n.
\end{align*}
In the following, we assume that evaluating $\eta_\alpha$ can be treated
as having constant cost. For example, when $\alpha$ is even we have an
analytic formula for $\eta_\alpha$ in terms of the Bernoulli polynomial.

If the weights have no special structure then the cost to evaluate
$K(\bst,\bsy)$ would be exponential in $s$ because of the sum over subsets
of $\{1:s\}$, but the cost is much reduced in special cases:
\setlength{\leftmargini}{1.5em}
\begin{itemize}
	\item[\textbullet] With product weights we have $K(\bst,\bsy) = \prod_{j=1}^s (1 +
	\gamma_j\eta_\alpha(t_j,y_j))$, which can be evaluated for a pair
	$(\bst,\bsy)$ at the cost of $\calO(s)$.

	\item[\textbullet] With POD weights we have
	\begin{align*}
	K(\bst,\bsy)
	=\!\!\!\;
	\sum_{\setu\subseteq\{1:s\}}\!\!\!\;
	\Gamma_{|\setu|}\!\!\; \prod_{j\in\setu} \Big(\gamma_j\, \eta_\alpha(t_j,y_j)\Big)
	= \sum_{\ell=0}^s \Gamma_\ell\,\underbrace{\sum_{\substack{\setu\subseteq\{1:s\}\\ |\setu|=\ell}}\;
		\prod_{j\in\setu} \Big(\gamma_j\, \eta_\alpha(t_j,y_j)\Big)}_{=:\, P_{s,\ell}},
	\end{align*}
	where $P_{s,\ell}$ is defined for $\ell=0,\ldots,s$, and can be
	computed recursively using
	\[
	P_{s,\ell} \,=\, P_{s-1,\ell} + \gamma_s\,\eta_\alpha(t_s,y_s)\, P_{s-1,\ell-1},
	\]
	together with $P_{s,0}:=1$ for all $s$ and $P_{s,\ell}:=0$ for all
	$\ell>s$. The cost to evaluate this for a pair $(\bst,\bsy)$ is
	$\calO(s^2)$.

	\item[\textbullet] With SPOD weights we have
	\begin{align*}
	K(\bst,\bsy)&
	\,=\, \sum_{\setu\subseteq\{1:s\}} \sum_{\bsnu_\setu\in \{1:\sigma\}^{|\setu|}} \Gamma_{|\bsnu_\setu|}
	\prod_{j\in\setu} \Bigl(\gamma_{j,\nu_j}\, \eta_\alpha(t_j,y_j)\Bigr) \\
	&\,=\, \sum_{\bsnu\in \{0:\sigma\}^s} \Gamma_{|\bsnu|}
	\prod_{j:\,\nu_j>0} \Bigl(\gamma_{j,\nu_j}\, \eta_\alpha(t_j,y_j)\Bigr)\\
	&
	\,=\, \sum_{\ell=0}^{s\sigma}
	\Gamma_\ell
	\underbrace{\sum_{\substack{\bsnu\in \{0:\sigma\}^s\\ |\bsnu|=\ell}} \;
		\prod_{j:\,\nu_j>0} \Bigl(\gamma_{j,\nu_j}\, \eta_\alpha(t_j,y_j)\Bigr)}_{=:\,P_{s,\ell}},
	\end{align*}
	where $P_{s,\ell}$ is now defined for $\ell=0,\ldots,s\sigma$, and can
	be computed recursively using
	\begin{align*}
	P_{s,\ell}
	\,=\, P_{s-1,\ell} + \eta_\alpha(t_s,y_s)\sum_{\nu=1}^{\min(\sigma,\ell)} \gamma_{s,\nu}\, P_{s-1,\ell-\nu},
	\end{align*}
	together with $P_{s,0}:=1$ for all $s$ and $P_{s,\ell}:=0$ for all
	$\ell>s\sigma$. The cost to evaluate this for a pair
	$(\bst,\bsy)$ is now
	$\calO(s^2\,\sigma^2)$.
\end{itemize}

\subsection{Cost for the kernel interpolant}  \label{sec:cost2}
We now summarize the cost for the kernel interpolant and different weights
using the results of the preceding two subsections. Let $X$ denote the
cost for one evaluation of~$f$. The cost breakdown is shown in
Table~\ref{tab:cost1}. The first four rows are considered to be
pre-computation cost while the last three rows are the running cost for
sampling. The cost for the fast CBC construction based on the
criterion $\calS_s(\bsz)$ with different weight parameters is analyzed in
\cite{CKNS-part2}.

For the PDE application, our kernel method is
\[
u_{s,h}(\bsx_i,\bsy) \,\approx\,
u_{s,h,n}(\bsx_i,\bsy) \,=\, \sum_{k=1}^n a_k(\bsx_i)\, K(\bst_k,\bsy),
\]
where $\{\bsx_i : i=1,\ldots,M\} \subset D$ is the set of finite element
nodes in the physical domain, and $a_k(\bsx_i)$ for $k = 1,\ldots n$ is
the solution for fixed $\bsx_i$ of the linear system
\[
\sum_{k=1}^n \calK_{k,k^{'}}\,a_k(\bsx_i)\,=\,u_{s,h}(\bsx_i,\bst_{k'}), \quad k'=1,\ldots,n.
\]

\begin{table}[t]
	\caption{Cost breakdown for the kernel interpolant $f_n$ based on $n$
		lattice points $\bst_k$ in $s$ dimensions, evaluated at $L$
		arbitrary
		points $\bsy_\ell$. Here $X$ is the cost for one evaluation
		of $f$.} \label{tab:cost1} \setlength{\arrayrulewidth}{0.2pt}
	\medskip\centering
	\begin{tabular}{|l|l@{\,}|l@{\,}|l@{\,}|}
		\hline
		\hfill Operation $\backslash$ Weights & Product & POD & SPOD\Tstrut\\
		\hline
		Fast CBC construction for $\bsz$
		& $s\,n\log(n)$
		&$s\,n\log(n)$&$s\,n\log(n)+ s^3\sigma^2\, n$ \Tstrut\Tstrut\\
		&&$+ s^2\log(s)\, n$&\Bstrut\\[1.5pt]
		Compute $K(\bst_k,\bszero)$ for all $k$
		& $s\,n$ & $s^2\, n$ & $s^2\,\sigma^2\,n$ \\
		Evaluate $f(\bst_k)$ for all $k$
		& $X\,n$ & $X\,n$ & $X\,n$ \\
		Linear solve for all coefficients $a_k$
		& $n\log(n)$ & $n\log(n)$ & $n\log(n)$ \\
		\hline
		Compute $K(\bst_k,\bsy_\ell)$ for all $k,\ell$
		& $s\,n\,L$ & $s^2\, n\,L$ & $s^2\,\sigma^2\,n\,L$ \Tstrut\\
		Assemble $f_n(\bsy_\ell)$ for all $\ell$
		& $n\,L$ & $n\,L$ & $n\,L$ \\
		OR Assemble $f_n(\bsy_\ell+\bst_{k})$ for all $\ell,k$
		& $n\,\log(n)\,L$ & $n\,\log(n)\,L$ & $n\,\log(n)\,L$ \\
		\hline
	\end{tabular}
\end{table}
\begin{table}[t] \setlength{\arrayrulewidth}{0.2pt}
	\caption{Cost breakdown for the kernel interpolant $u_{s,h,n}$ based
		on
		$n$ lattice points $\bst_k$ in $s$ dimensions, evaluated at $M$
		finite
		element nodes $\bsx_i$ and $L$ arbitrary points $\bsy_\ell$. Here
		$M^a$
		for some positive $a$ is the cost for one finite element solve
		with $M$
		nodes.} \label{tab:cost2}
	\medskip\centering
	\begin{tabular}{|l|l@{\,}|l@{\,}|l@{\,}|}
		\hline
		\hfill Operation $\backslash$ Weights & Product & POD & SPOD \Tstrut\\
		\hline
		Fast CBC construction for $\bsz$
		& $s\,n\log(n)$
		& $s\,n\log(n)$&
		$s\,n\log(n)+ s^3\sigma^2\, n$ \Tstrut\Tstrut\\
		&&$+ s^2\log(s)\, n$&\\
		[1.5pt]
		Compute $K(\bst_k,\bszero)$ for all $k$
		& $s\,n$ & $s^2\, n$ & $s^2\,\sigma^2\,n$ \\
		Evaluate $u_{s,h}(\bsx_i,\bst_k)$ for all $i, k$
		& $M^a\,n$ & $M^a\,n$ & $M^a\,n$ \\
		Linear solve for all coeff. $a_k(\bsx_i)$
		& $M\,n\log(n)$ & $M\,n\log(n)$ & $M\,n\log(n)$ \\
		\hline
		Compute $K(\bst_k,\bsy_\ell)$ for all $k,\ell$
		& $s\,n\,L$ & $s^2\, n\,L$ & $s^2\,\sigma^2\,n\,L$\Tstrut \\
		Assemble $u_{s,h,n}(\bsx_i,\bsy_\ell)$ for all $i,\ell$
		& $M\,n\,L$ & $M\,n\,L$ & $M\,n\,L$ \\
		OR Assemble $u_{s,h,n}(\bsx_i,\bsy_\ell+\bst_k)$
		& $M\,n\,\log(n)\,L$ & $M\,n\,\log(n)\,L$ & $M\,n\,\log(n)\,L$\\[-0.05pt]
		\qquad\qquad\qquad\qquad\quad\ \   for all $i,\ell,k$ & & & \\
		\hline
	\end{tabular}
\end{table}

Let $M^a$ for some $a\,\ge\,1$ denote the cost of the finite element solve
to obtain all $\bsx_i$ for one~$\bsy$. The cost breakdown for obtaining
the kernel interpolant at all $M$ nodes for all $L$ samples is shown in
Table~\ref{tab:cost2}. Note in this case that the coefficients
$a_k(\bsx_i)$ need to be computed for every finite element node $\bsx_i$,
hence the scaling of the cost in line 4 of Table~\ref{tab:cost2} by $M$.
If the quantity of interest is a \emph{linear functional} of the PDE
finite element solution (no need for the solution at every node), then the
cost is reduced to be as in Table~\ref{tab:cost1} with $X=M^a$.

\section{Numerical experiments}\label{sec:numerical}

We consider the parametric PDE
problem~\eqref{eq:pdestrong}--\eqref{eq:pdestrong2} in the physical domain
$D=(0,1)^2$  with the source term $q(\bsx)=x_2$ and the diffusion coefficient periodic in the parameters  $\bsy$ given by~\eqref{eq:a-param}.

For each fixed $\bsy\in U_s$ (i.e. with the sum in \eqref{eq:a-param} truncated to $s$ terms), we solve the PDE using a piecewise linear
finite element method with $h=2^{-5}$ as the finite element mesh size. As
the stochastic fluctuations, we consider the functions
$$
\psi_j(\bsx) := c\, j^{-\theta}\sin(j\pi x_1)\sin(j\pi x_2),\qquad\bsx=(x_1,x_2)\in D,~j\geq 1,
$$
where $c>0$ is a constant,  $\theta>1$ is the decay rate of the stochastic
fluctuations, and $s\in\mathbb N$ is the truncation dimension. Following
\eqref{eq:def-bj}, the sequence $(b_j)_{j\geq 1}$ is taken to be
\[
b_j := \frac{c\, j^{-\theta}}{\sqrt{6}\;a_{\min}}, \quad\!\mbox{with}\quad\!
a_{\min}:=1-\frac{c}{\sqrt 6}\,\zeta(\theta)
\quad\mbox{as well as}\quad
a_{\max}:=1+\frac{c}{\sqrt 6}\,\zeta(\theta),
\]
and $c<\frac{\sqrt{6}}{\zeta(\theta)}$, ensuring that the assumption (A2) is satisfied.
\begin{figure}[t]
	\centering
	\includegraphics[height=.36\textwidth]{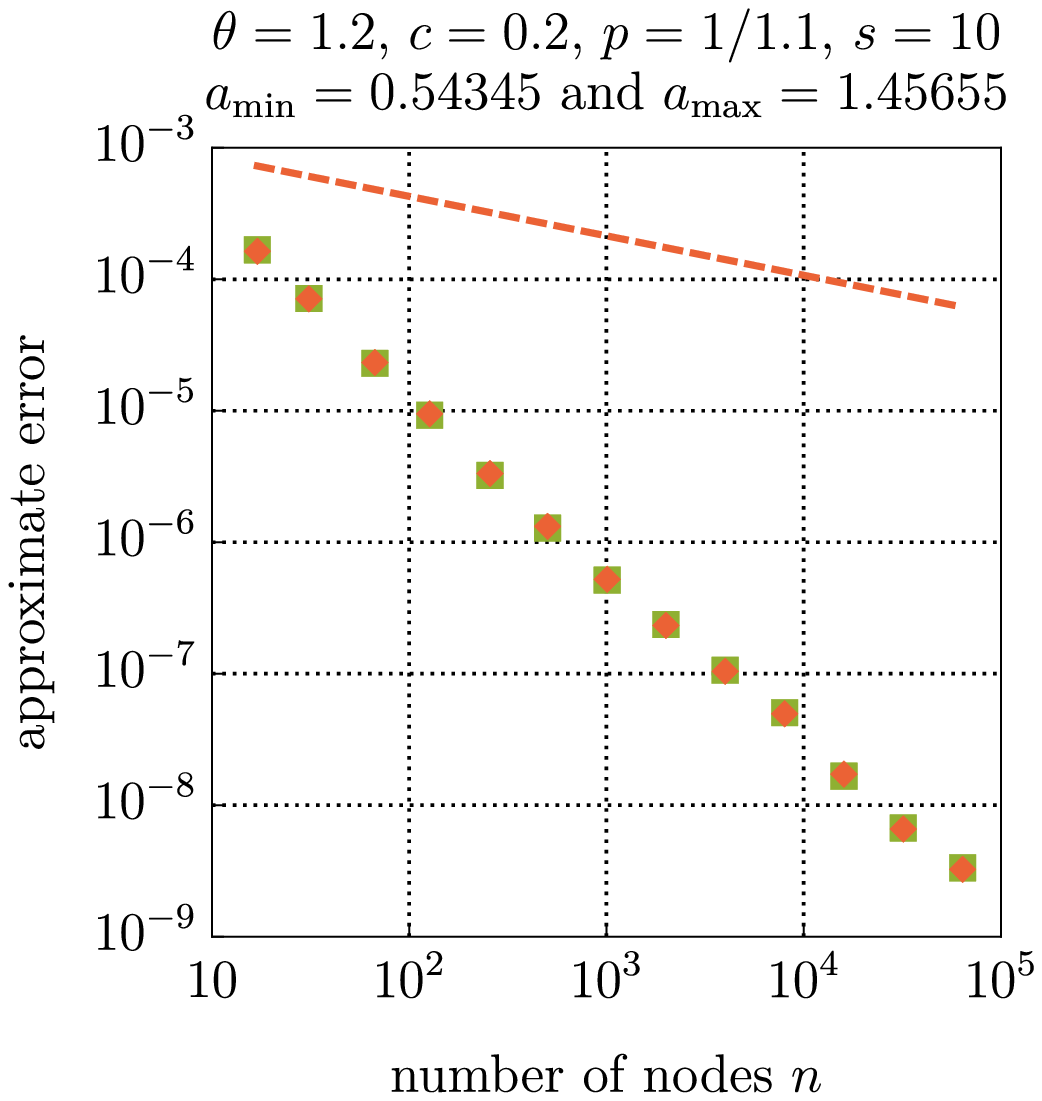}\includegraphics[height=.36\textwidth]{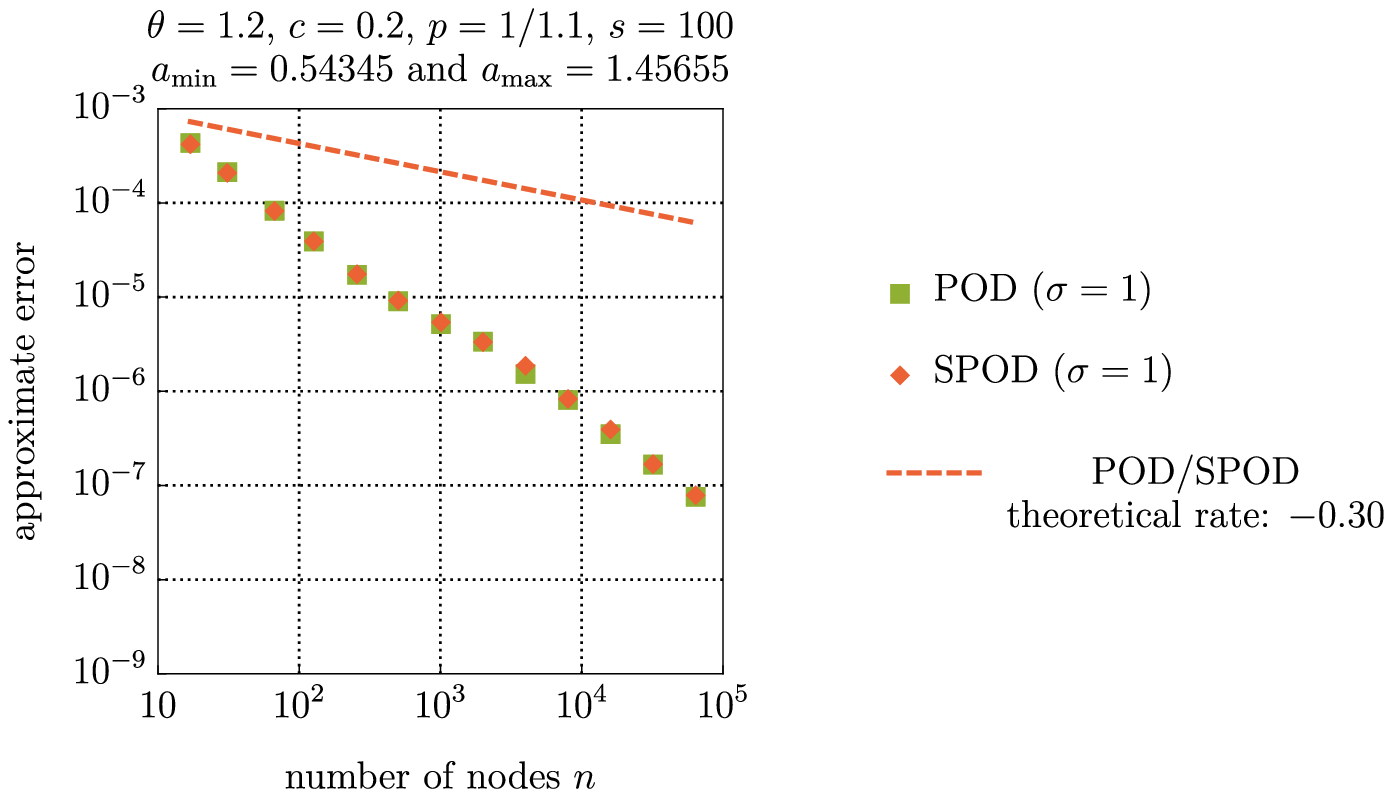}\\
	\bigskip
	\includegraphics[height=.36\textwidth]{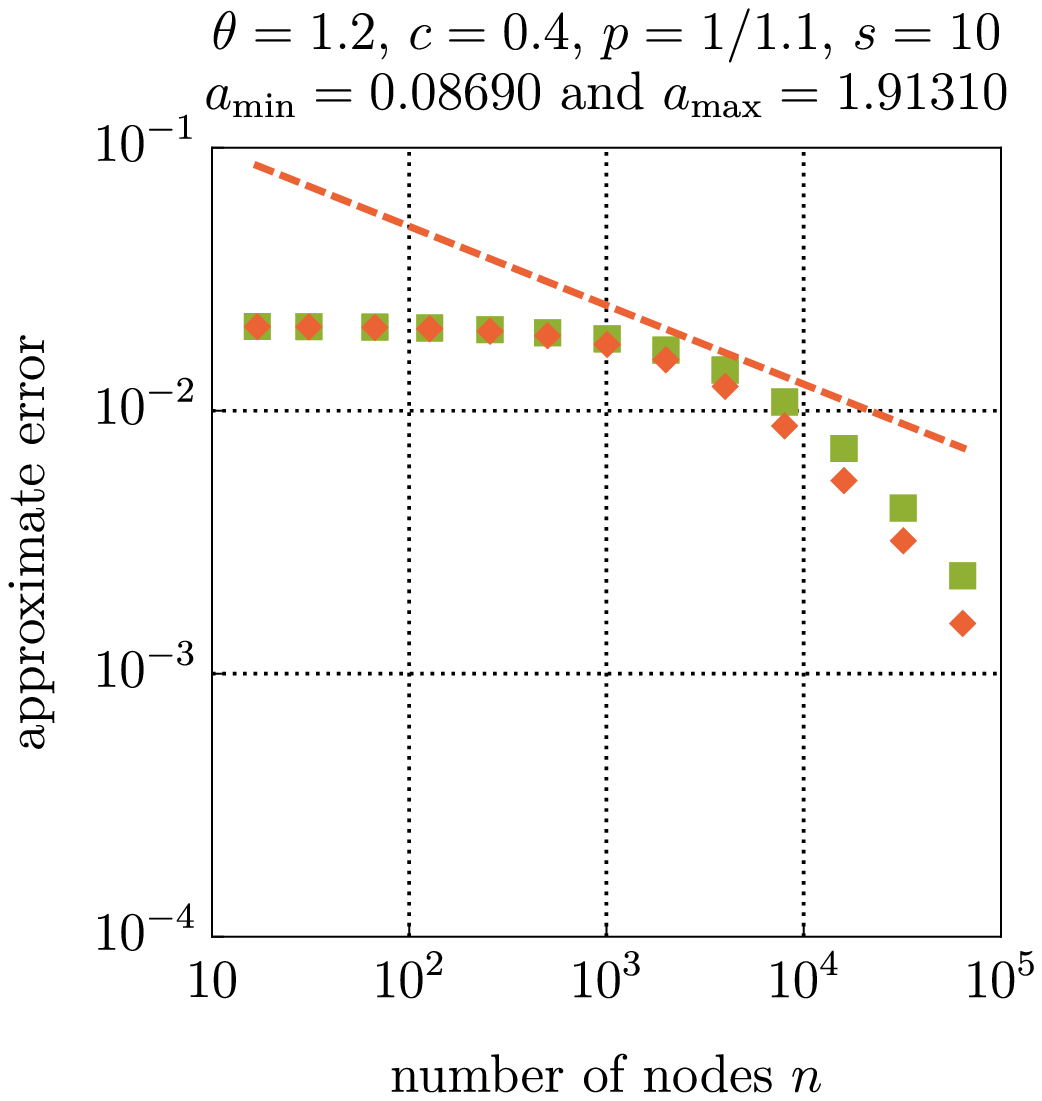}\includegraphics[height=.36\textwidth]{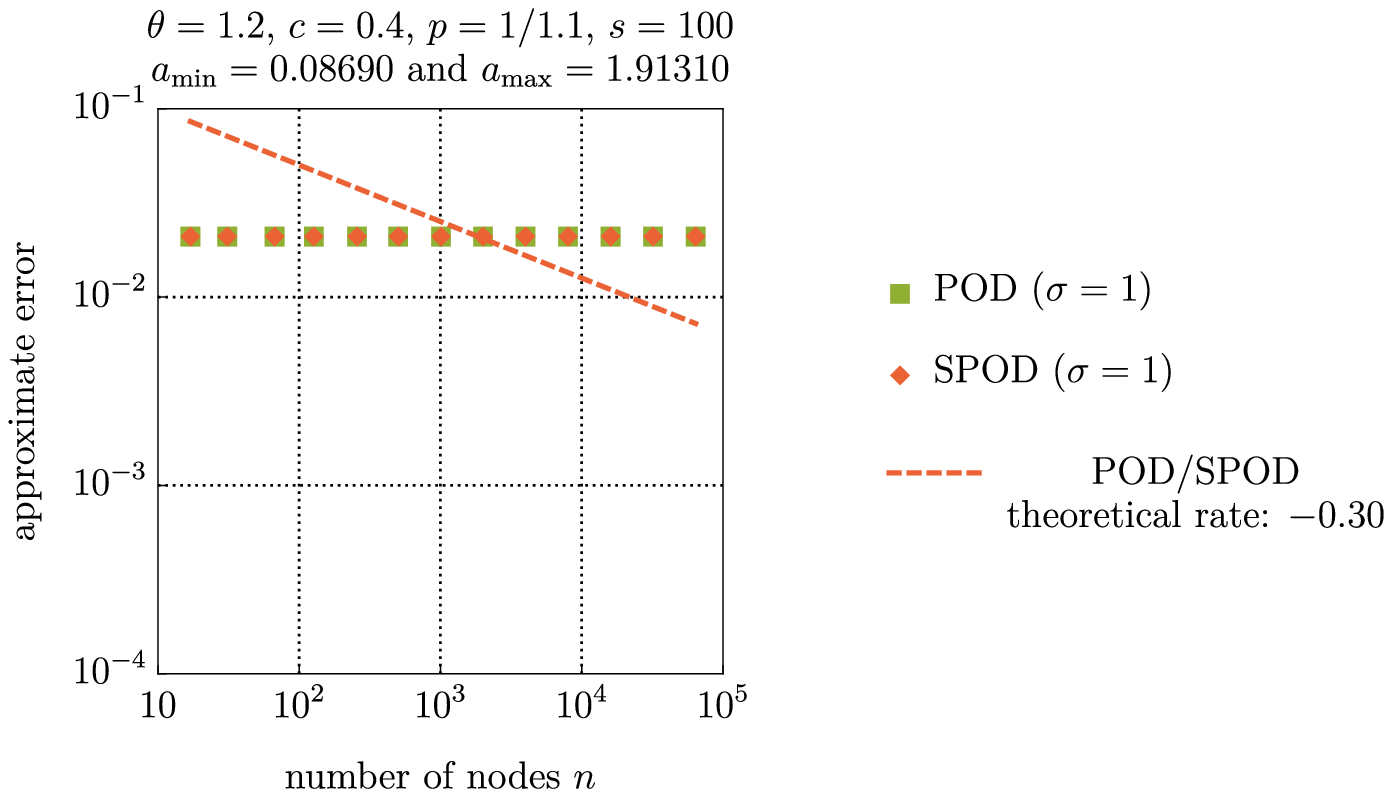}
	\caption{The kernel interpolation errors of the PDE problem~\eqref{eq:pdestrong}--\eqref{eq:pdestrong2}
		with $\theta=1.2$, $p=1/1.1$, $c\in\{0.2,0.4\}$, and $s\in\{10,100\}$.
		Results are displayed for kernel interpolants constructed using POD
		and SPOD weights. (Product weights~\eqref{eq:prod-weights} are not well-defined in this case.)}\label{fig:1}
\end{figure}

\begin{figure}[t]
	\centering
	\includegraphics[height=.36\textwidth]{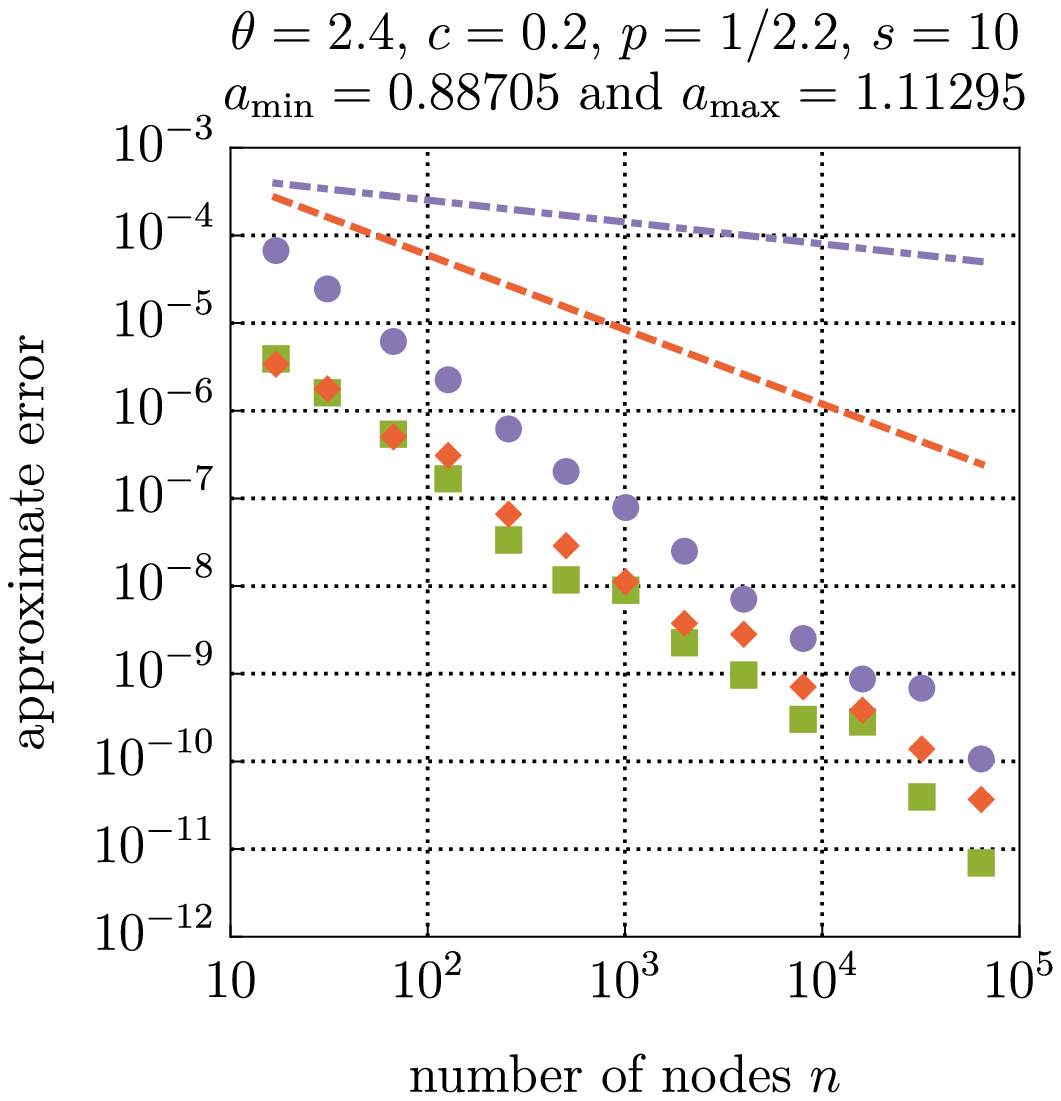}\includegraphics[height=.36\textwidth]{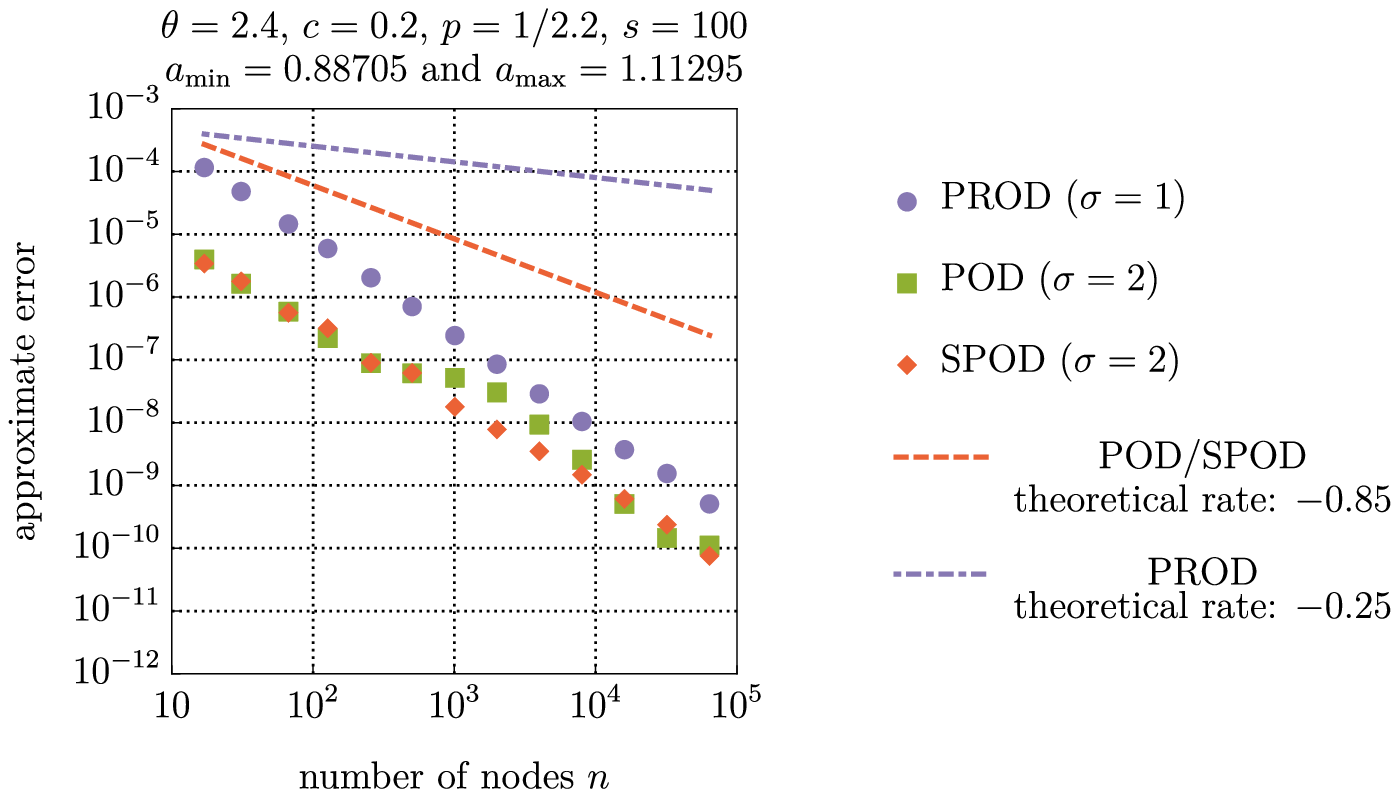}\\
	\bigskip
	\hspace*{.2cm}
	\includegraphics[height=.36\textwidth]{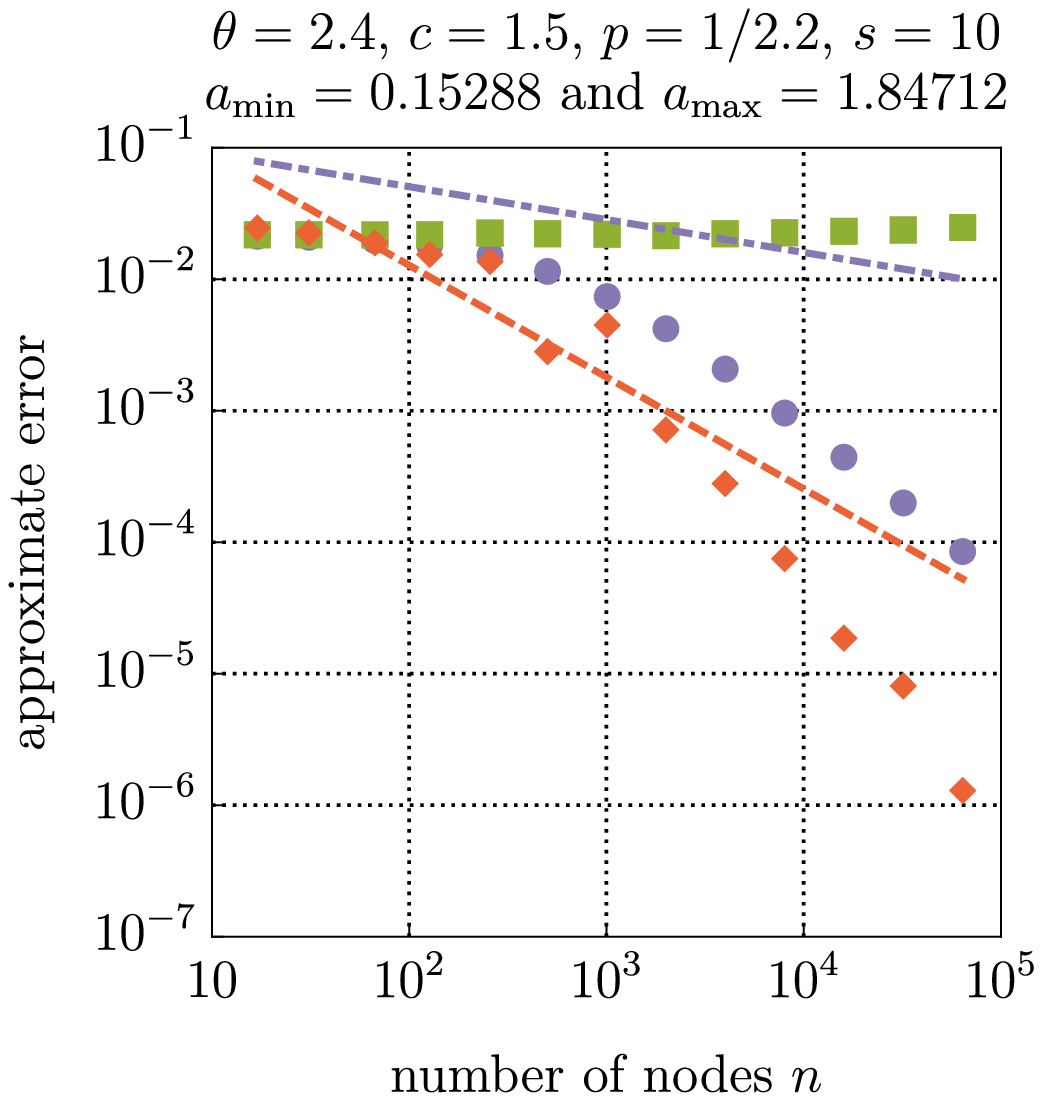}\includegraphics[height=.36\textwidth]{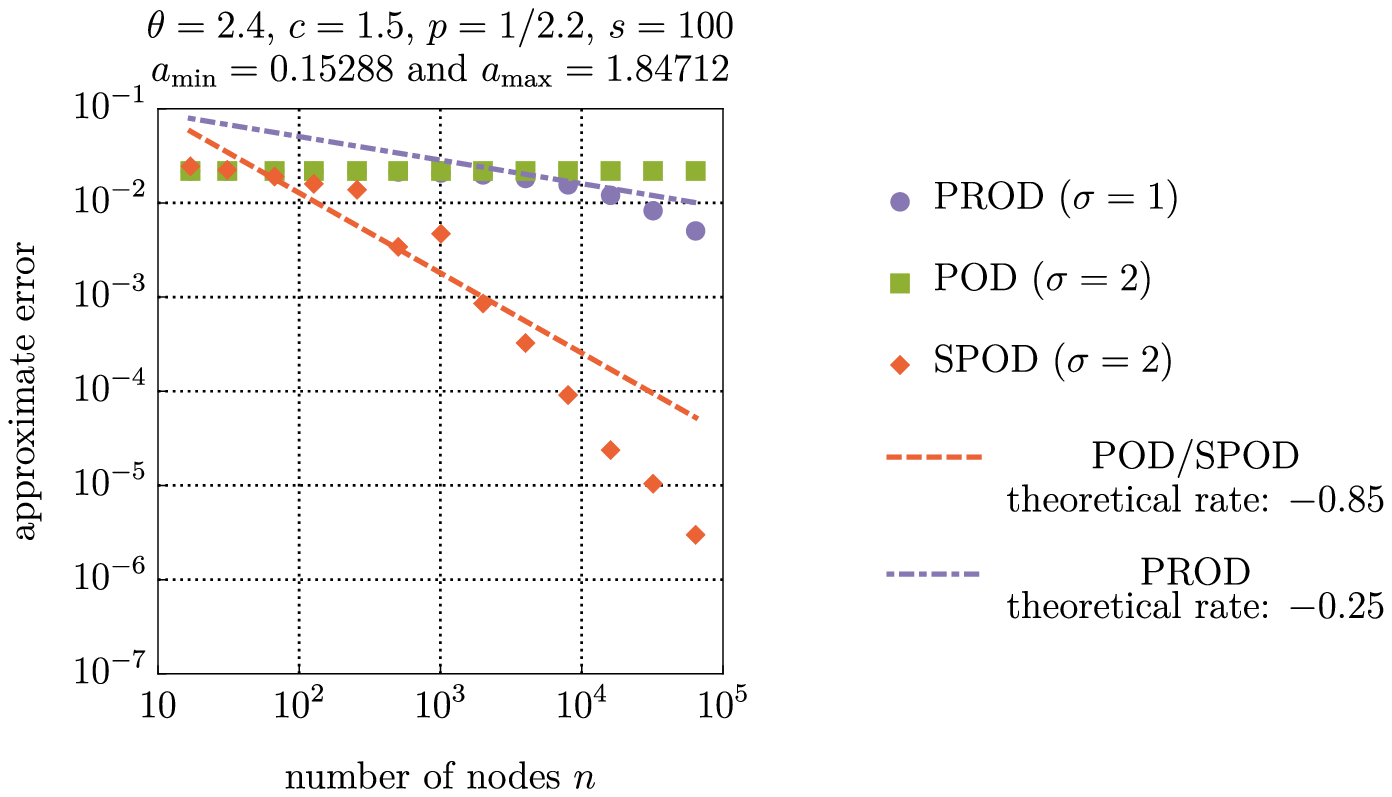}
	\caption{The kernel interpolation errors of the PDE problem~\eqref{eq:pdestrong}--\eqref{eq:pdestrong2}
		with  $\theta=2.4$, $p=1/2.2$, $c\in\{0.2,1.5\}$, and $s\in\{10,100\}$.
		Results are displayed for kernel interpolants constructed using product (PROD), POD, and SPOD weights.}\label{fig:2}
\end{figure}

We approximate the dimensionally truncated finite element solution
$u_{s,h}$ of the PDE~\eqref{eq:pdestrong}--\eqref{eq:pdestrong2} by
constructing a kernel interpolant
$u_{{s,h,n}}(\bsx,\bsy):=A^\ast_n(u_{s,h}(\bsx,\bsy))$, $\bsx\in D$ and
$\bsy\in U_s$ using SPOD weights, POD weights, and product weights chosen
according to Theorem~\ref{thm:spod}, Theorem~\ref{thm:pod}, and
Theorem~\ref{thm:prod}, respectively. The same weights appear in the
formula for the kernel as well as the search criterion for finding good
lattice generating vectors. The kernel interpolant is constructed over a
lattice point set $\boldsymbol t_k:=\{k\boldsymbol z/n\}$, $k\in
\{1,\ldots,n\}$, where the generating vector $\bsz\in\{1,\ldots,n-1\}^s$
has been obtained separately for each weight type using the fast CBC
algorithm detailed in~\cite{CKNS-part2}. We assess the kernel
interpolation error by computing
\begin{align*}
\text{error}
&\,=\, \sqrt{\int_{U_s}\int_D \big(u_{s,h}(\bsx,\bsy)-u_{s,h,n}(\bsx,\bsy)\big)^2\,{\rm d}\bsx\,{\rm d}\bsy}\\
&\,\approx\, \sqrt{\frac{1}{Ln}\sum_{\ell=1}^L\sum_{k=1}^n \int_D
	\big(u_{s,h}\big(\bsx,\bsy_\ell+\boldsymbol{t}_k\big)-u_{s,h,n}(\bsx,\bsy_\ell+\boldsymbol{t}_k)\big)^2\,{\rm d}\bsx}\,,
\end{align*}
where $\bsy_\ell$ for $\ell=1,\ldots,L$ is a sequence of Sobol$'$ nodes in
$[0,1]^s$, with $L=100${, and we recall  that all our functions including
	$u_{s,h}(\bsx,\bsy)$ and $u_{s,h,n}(\bsx,\bsy)$ are $1$-periodic with
	respect to $\bsy$}. The kernel interpolant in the formula above can be
evaluated efficiently over the union of shifted lattices
$\bsy_\ell+\bst_k$, $\ell=1,\ldots,L$, $k=1,\ldots,n$, by making use of
formula~\eqref{eq:efficientfn} in conjunction with the fast Fourier
transform, requiring only the evaluation of the values
$K(\bst_k,\bsy_\ell)$.

We compute the approximation error when $\theta\in \{1.2,2.4,3.6\}$,
choosing $p\in\{\frac{1}{1.1},\frac{1}{2.2},\frac{1}{3.3}\}$,
respectively, which are all $p$ values ensuring that (A3) is satisfied. We
also use several values of the parameter $c\in\{0.2, 0.4, 1.5\}$ to
control the difficulty of the problem. We set $\delta=0.1$ in the product
weights~\eqref{eq:prod-weights}. The numerical experiments have been
carried out by using both  $s=10$ and $s=100$ as  the truncation
dimensions. Selected results are displayed in
Figures~\ref{fig:1}--\ref{fig:3}, where the corresponding values of
$a_{\min}$ and $a_{\max}$ are listed to give insights to the difficulty of
the problem in each case, as well as the parameter $\sigma$ which shows
the ``order'' of the lattice rule.
Note that as $s$ increases the problem does not change, but the computation becomes harder because the diffusion coefficient takes a wider range of values, with small values of $a(\bsx,\bsy)$ being especially challenging.

The empirically obtained convergence rates appear to exceed the
theoretically expected  rates  once the kernel interpolant enters the
asymptotic regime of convergence.
The convergence behavior of the kernel interpolant with SPOD weights is good across all experiments, except for the most difficult PDE problem of the lot corresponding to parameters $\theta=1.2$ and $c=0.4$, illustrated in the bottom row of Figure~\ref{fig:1}.
On the other hand, the POD weights and, to a lesser extent, the product weights appear to be somewhat sensitive to the effective dimension of the PDE problem, either leading to a longer pre-asymptotic regime compared to SPOD weights (see ``PROD'' in the bottom row of Figure~\ref{fig:2}) or no apparent convergence (see ``POD'' in the bottom rows of Figures~\ref{fig:2} and~\ref{fig:3}).

\begin{figure}[t]
	\centering
	\includegraphics[height=.36\textwidth]{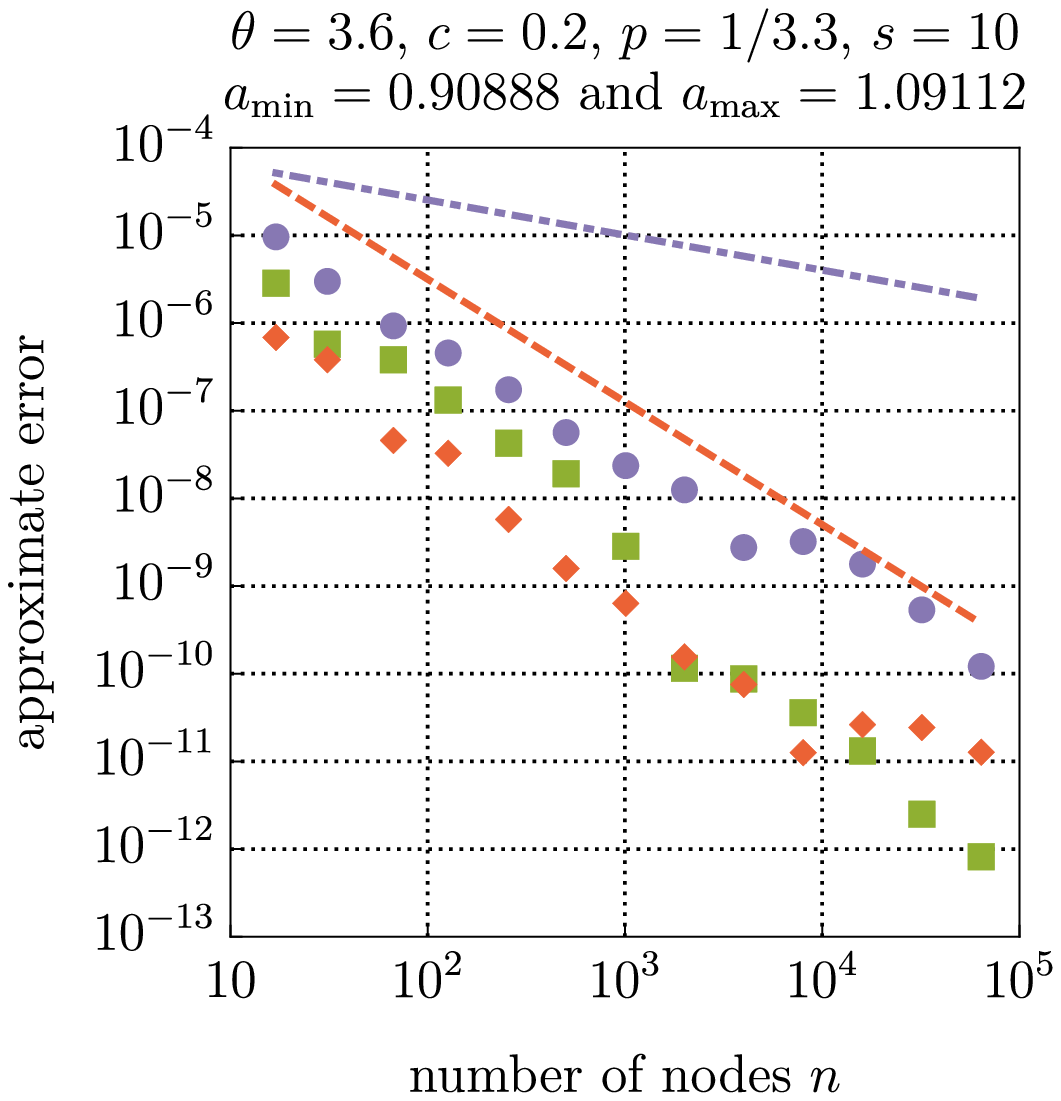}\includegraphics[height=.36\textwidth]{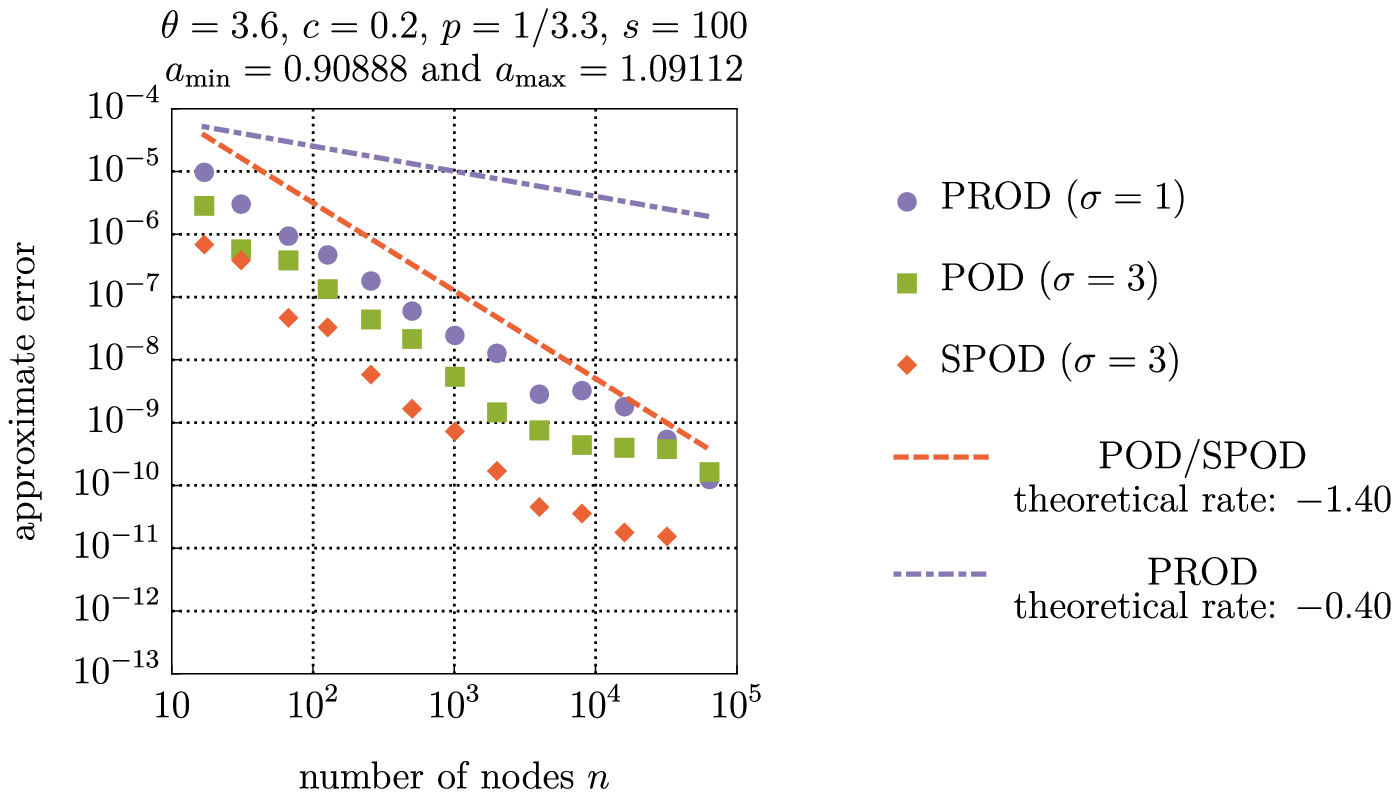}\\
	\bigskip
	\includegraphics[height=.36\textwidth]{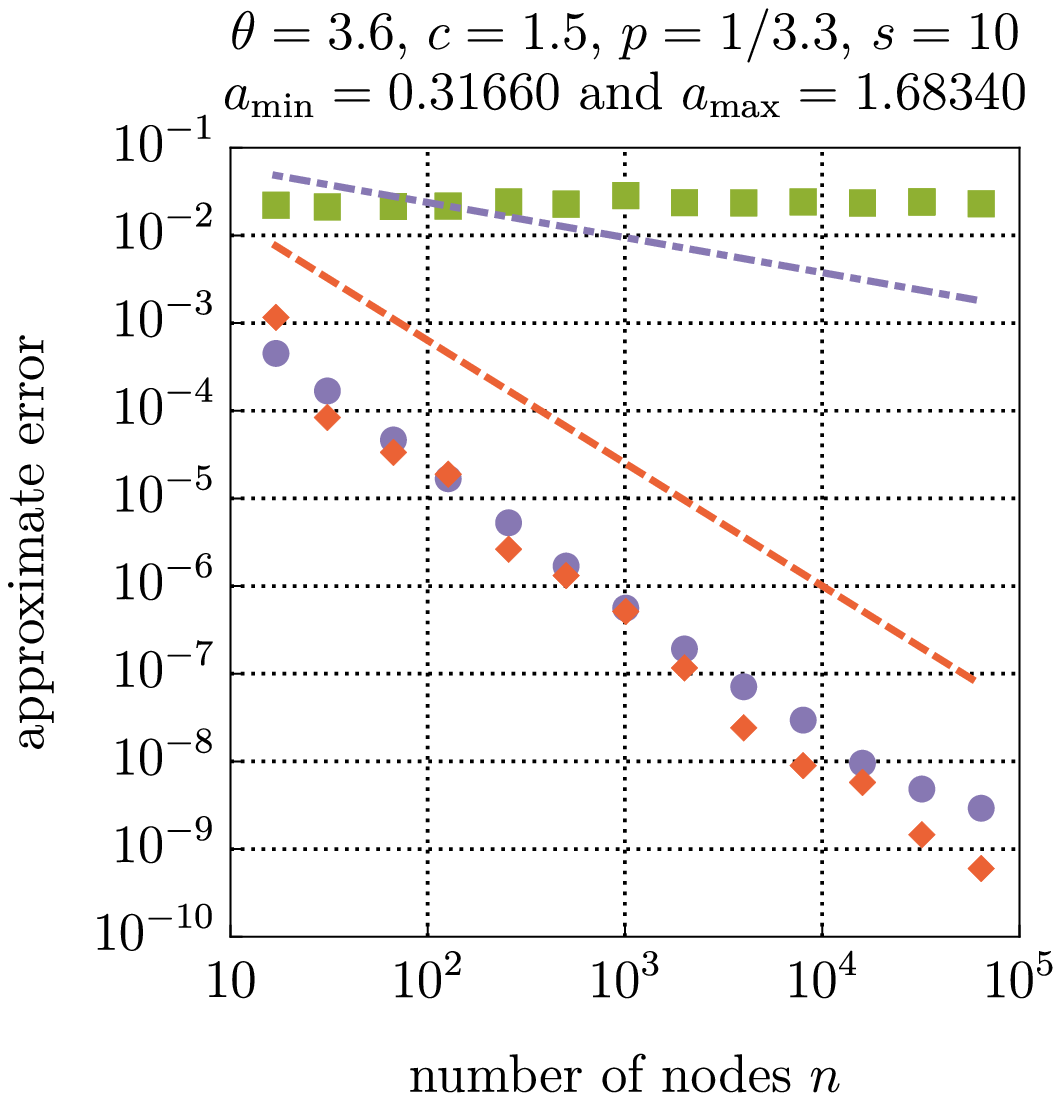}\includegraphics[height=.36\textwidth]{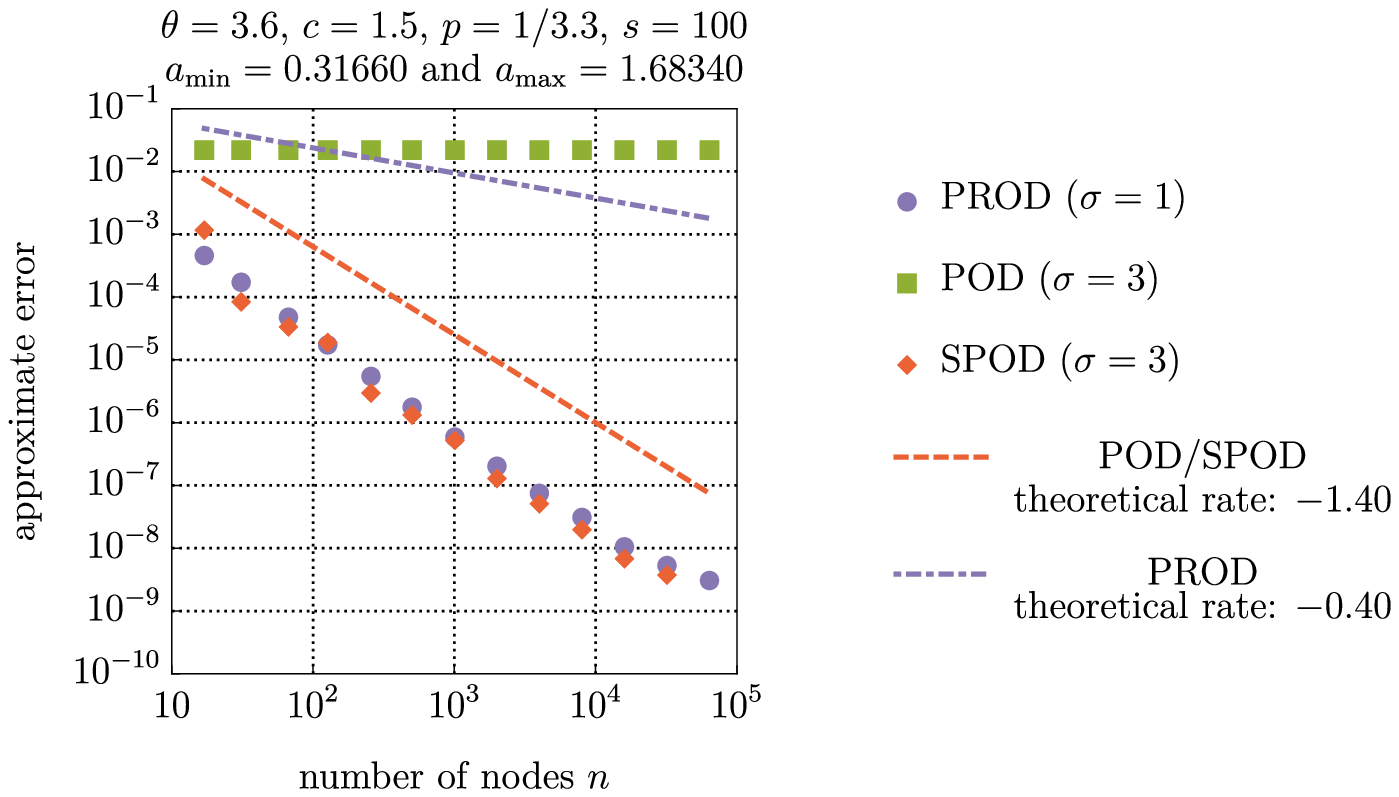}
	\caption{The kernel interpolation errors of the PDE problem~\eqref{eq:pdestrong}--\eqref{eq:pdestrong2}
		with $\theta=3.6$, $p=1/3.3$, $c\in\{0.2,1.5\}$, and $s\in\{10,100\}$.
		Results are displayed for kernel interpolants constructed using product (PROD), POD, and SPOD weights.}
	\label{fig:3}
\end{figure}

\begin{figure}[!t]
	\includegraphics[width=.9\textwidth]{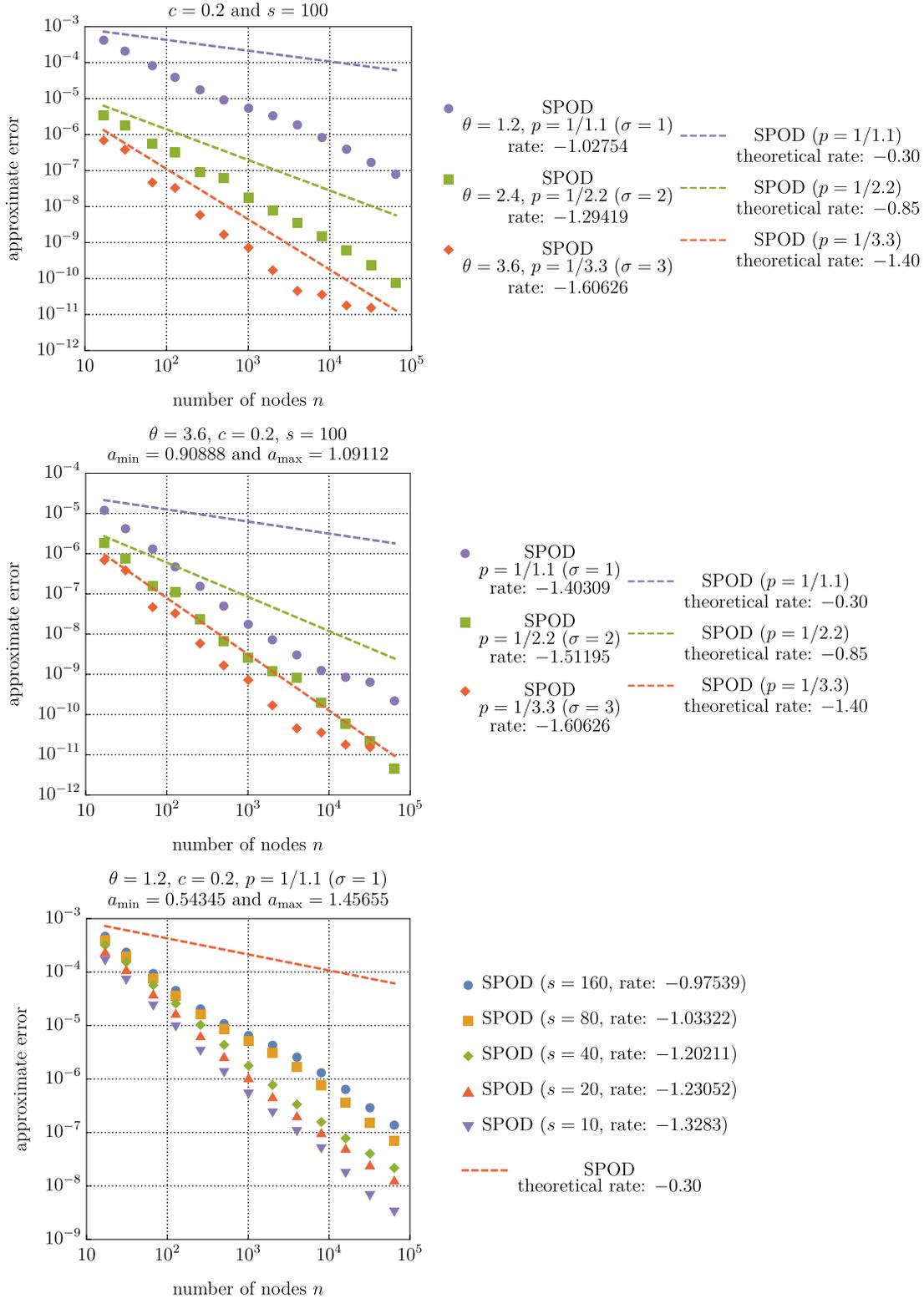}
	\caption{
		The kernel interpolation errors of the PDE problem~\eqref{eq:pdestrong}--\eqref{eq:pdestrong2}
		for kernel interpolants constructed using SPOD weights and varying parameters.
		Top: \emph{fixed} $s=100$ and $c=0.2$ and different values of $\theta$.
		Middle: \emph{fixed} $s=100$ and $\theta=3.6$, different values of $p$, and corresponding
		$\sigma=\sigma(p)$.
		Theoretical error-decay rate is $-\frac1{2p}+\frac14=-0.3,-0.85,-1.4$ for $p=\frac{1}{1.1},\frac{1}{2.2},\frac{1}{3.3}$.
		Bottom: \emph{fixed} $\theta=1.2$, $c=0.2$, $p=1/1.1$, and $\sigma=1$ with different values of $s\in\{10,20,40,80,160\}$.}
	\label{fig:bonus}
\end{figure}

\begin{figure}[!t]
	\includegraphics[width=1\textwidth]{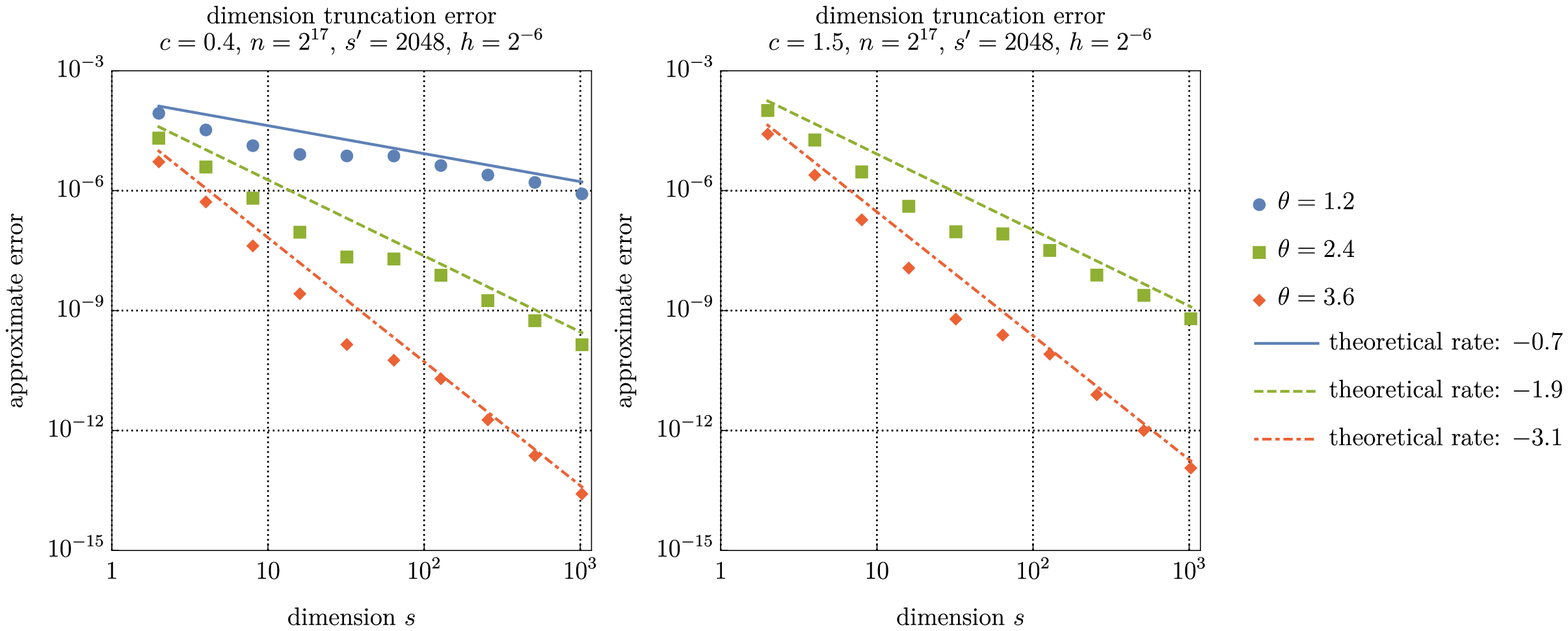}
	\caption{
		The dimension truncation errors of the PDE problem~\eqref{eq:pdestrong}--\eqref{eq:pdestrong2}. Left: $c=0.4$ and $\theta\in\{1.2,2.4,3.6\}$. Right: $c=1.5$ and $\theta\in\{2.4,3.6\}$.}
	\label{fig:dimtrunc}
\end{figure}

In the top graph of Figure~\ref{fig:bonus} we compare the results in
Figures~\ref{fig:1}--\ref{fig:3} from SPOD weights with truncation
dimension $s=100$ for the same damping parameter $c=0.2$ and different
$\theta\in\{1.2,2.4,3.6\}$, listing the estimated convergence rate in each
case. In the middle graph of Figure~\ref{fig:bonus} we show the results of
an additional experiment, namely, that for $s=100$ where we fix the decay
rate $\theta=3.6$ of the stochastic fluctuations and solve the parametric
PDE problem using different $\sigma\in\{1,2,3\}$ in the formula for SPOD
weights, which correspond to  $p\in
\{\frac{1}{1.1},\frac{1}{2.2},\frac{1}{3.3}\}$, see
Theorem~\ref{thm:spod}. Finally, in the bottom graph of
Figure~\ref{fig:bonus}, we return to the experimental setup illustrated in
Figure~\ref{fig:1} except this time we carry out the experiment using the
truncation dimensions $s\in\{10,20,40,80,160\}$.

In all cases displayed in Figure~\ref{fig:bonus}, the observed error
decays faster than the rate implied by Theorem~\ref{thm:spod}. We also see
that increasing $\sigma$ improves the error and mildly improves the rate
of convergence. Moreover, we observe from the graph in the middle that the
parameter $\theta$ that governs the decay of  $\|\psi_j\|_{L_{\infty}(D)}$
is more important in determining the rate than the choice of $\sigma$.
This observation suggests that the kernel interpolation with the rank-$1$
lattice points are robust in $\sigma$.
Notice that $\sigma$ appears in the definition  \eqref{eq:spod-weights} of
the SPOD weights; and that the weights are an input of the CBC
construction, and are used to define the kernel $K(\cdot,\cdot)$.
These observed error decay rates and the robustness are encouraging, but
also suggest that the worst-case error estimates may be pessimistic in
practical situations. The bottom graph in Figure~\ref{fig:bonus}
illustrates the effect that the truncation dimension has on the obtained
convergence rates{: we see that the observed convergence rate remains
	reasonable even when $s=160$.}

Finally, we present numerical experiments that assess the dimension
truncation error rate given in Theorem~\ref{thm:dimensiontruncation}. We
consider the same PDE and stochastic fluctuations $(\psi_j)_{j\geq 1}$
which were stated at the beginning of this section. We choose the
parameters $c=0.4$ with $\theta\in\{1.2, 2.4,3.6\}$ and $c=1.5$ with
$\theta\in\{2.4,3.6\}$. The PDE is discretized using piecewise linear
finite element method with mesh size $h=2^{-6}$ and the integral over the
computational domain $D$ is computed exactly for the finite element
solutions. As the reference solution, we use the finite element solution
with truncation dimension $s'=2^{11}$. The dimension truncation error is
then estimated by computing
\[
\sqrt{\int_{U_{s'}} \int_D (u_{s',h}(\bsx,\bsy)-u_{s,h}(\bsx,\bsy))^2\,{\rm d}\bsx\,{\rm d}\bsy}
\]
\sloppy{
for $s=2^k$, $k=2,\ldots,10$, where the value of the parametric integral
is computed approximately by means of a rank-1 lattice rule based on the
off-the-shelf generating vector \texttt{ lattice-39101-1024-1048576.3600}
downloaded from \url{https://web.maths.unsw.edu.au/~fkuo/lattice/} with
$n=2^{17}$ nodes. The results are displayed in Figure~\ref{fig:dimtrunc}.
The theoretically expected rate, which is essentially $\mathcal
O(s^{-\theta+1/2})$, is clearly observed in all cases.}

\section{Conclusions}

In this paper we have developed an approximation scheme for periodic
multivariate functions based on kernel approximation at lattice points, in
the setting of weighted Hilbert spaces of dominating mixed smoothness.  We
have developed $L_2$ error estimates that are independent of dimension,
for three classes of weights: product weights, POD (product and order
dependent) weights and SPOD (smoothness driven product and order
dependent) weights.  Numerical experiments for 10 and 100 dimensions give
results that (with the possible exception of POD weights) are generally
satisfactory, and that exhibit better than predicted rates of convergence.

Nevertheless, there is room for future improvement.  First, the error
analysis is based on the principle that the $L_2$ error is bounded above
by the worst case $L_2$ error multiplied by the norm of the function being
approximated; yet it is known (see Section~\ref{sec:wce-lower-bound}) that
the worst-case error has a poor rate of convergence. It may be possible to
obtain improved error rates by making better use of the special properties
of the minimum norm interpolant in conjunction with the analytic parameter
dependence of the PDE solution
of~\eqref{eq:pdestrong}--\eqref{eq:pdestrong2}.

\section*{Acknowledgements}
We gratefully acknowledge the financial support from the Australian Research Council for the project DP180101356. This research includes computations using the computational cluster Katana supported by Research Technology Services at UNSW Sydney.

\end{document}